\let\mathbb\mathds 
\numberwithin{equation}{section} 
\newtheorem*{theoremnn}{Theorem}   
\newtheorem{theorem}{Theorem}[section]
\newtheorem{definition}[theorem]{Definition}
\newtheorem{proposition}[theorem]{Proposition}
\newtheorem{lemma}[theorem]{Lemma}
\newtheorem{corollary}[theorem]{Corollary}
\newtheorem{remark}[theorem]{Remark}
    \newlength{\myarrowsize} 
    \newlength{\myoldlinewidth}
\tikzstyle{vecArrow} = [thick, decoration={markings,mark=at position
\tikzstyle{innerWhite} = [semithick, white,line width=1.4pt, shorten >= 4.5pt]
	\newcommand\POSITION[3]{%
	\begingroup
	\@tempdim@x=0cm
	\@tempdim@y=\paperheight
	\advance\@tempdim@x#1
	\advance\@tempdim@y-#2
	\put(\LenToUnit{\@tempdim@x},\LenToUnit{\@tempdim@y}){#3}%
	\endgroup
	}
\begin{document}

 \title[Canonical form in $\tilde{A}_n$ ]{Canonical reduced expression in  affine Coxeter groups\\
 Part I - Type $\tilde{A}_n$  }  
\author{Sadek AL HARBAT} 
 
	\begin{abstract} We classify the elements of $W(\tilde{A}_n)$ by giving a  canonical reduced expression for each,  using basic tools among which affine length. We give some direct consequences for such a canonical form: a description of left multiplication by a simple reflection, a study  of the right descent set,  and a proof that the affine length is preserved along the tower of affine Coxeter groups of type $\tilde A$, which implies in particular that the corresponding tower of affine Hecke algebras is a faithful tower.
	\end{abstract}

\date{\today}
\subjclass[2010]{Primary 20F55, Secondary 05E16, 20C08.}

 	\maketitle
	


\section{Introduction} 

{\it This paper is the first of a series in which for the elements  of  an affine Coxeter group  $\tilde W $,  we produce a canonical reduced expression, together with the set of all distinguished representatives of $\tilde W  / W $ where $W$ is a maximal parabolic   subgroup of   $\tilde W $. This very paper is meant to detail  type $\tilde A_{n} $, we do the same in the second paper for types $\tilde C_{n} $ and $\tilde B_{n} $, while type $\tilde D_{n} $ and the five other types are to be treated in the last two.} \\ 

\subsection{ } Coxeter systems and related topics (such as Hecke algebras and their quotients, K-L polynomials and the new born: Light leaves) take a place in the heart of representation theory. Reduced expressions are the salt of such systems: Almost every related object is defined starting from a reduced expression or reduced to a reduced expression explanation, especially and not surprisingly objects which are "independent" from reduced expressions! Such as: Hecke algebras bases and Bruhat order. One may bet that no work concerning/using Coxeter group theory is reduced-expression free. A {\em canonical reduced expression} for elements in the infinite families of finite Coxeter groups has been known while ago, we refer to \cite{St} to see an easy explication of such canonical expressions. $W(\tilde A_{n}) $ is a famous extension of the symmetric group $W(A_n)$, known to be the first "group".

Let $W(A_n)$ the $A$-type Coxeter group with $n\ge 1 $ generators $\{ \sigma_ 1 , \sigma_2,  \dots \sigma_n\}$ (AKA Sym$_{n+1}$). Let $\lfloor i,j \rfloor = \sigma_i \sigma_{i+1} \dots \sigma_j $ for  $1 \le i \le j \le n$. One of the very basic results is: 

\begin{theoremnn}
    $W(A_n)$ is the set of elements of the following canonical reduced form: 
 \begin{equation}\label{1.1}
 \lfloor i_1, j_1 \rfloor \lfloor i_2, j_2 \rfloor \dots  \lfloor i_s, j_s \rfloor  
 \end{equation}
with $n \ge j_1 > \dots > j_s  \ge 1$ and 
$ j_t \ge i_t \ge 1$ for $s \ge t \ge 1$. Identity is to be considered the case where $s=0$.
\end{theoremnn}

This is equivalent to saying that the distinguished representatives of the cosets in $W(A_n)/W(A_{n-1})$
are the  elements $1$ and $\lfloor r, n \rfloor$ for $1\le r \le n$.  \\

In this work we give an analogue of this assertion for the infinite affine Coxeter group  $W(\tilde A_{n}) $. 
More precisely: we give a  canonical reduced expression  for the elements of  this group, with  a full set of the distinguished coset representatives 
of $W(\tilde A_{n}) /W(A_{n}) $. Then we give some examples of direct consequences of this classification by canonical forms. \\ 

\subsection{ }  The key word (and almost everywhere used creature in this work) is {\em affine length} (Definition \ref{AL}): We let $n\ge 2 $ and  $S_n=\{ \sigma_ 1 , \sigma_2,  \dots \sigma_n, a_{n+1}\}$ be the set of Coxeter generators of $W(\tilde A_{n}) $, then the {\it affine length} of an element $w \in W(\tilde A_{n})$ is the minimal number of occurrences of $a_{n+1}$ in all expressions of $w$.   
  We let  $$h(r,i)   =   \sigma_r \sigma_{r+1} \dots \sigma_n \sigma_i \sigma_{i-1} \dots \sigma_1$$ for $ 1\le i \le n-1$, $1 \le  r \le n$, with obvious extension to $r=n+1$ or $i=0$, see \S \ref{affA}.  The set of distinguished representatives of the right $W(A_n)$-cosets  of affine length $1$ is the set of elements  given by the reduced expressions 
$$\mathcal B(r,i)=  h(r, i) a_{n+1}, \quad  0\le i \le n-1,   \   1 \le  r \le n+1$$    (Lemma \ref{Rofw}). We call such expressions 
{\it affine bricks}.
 The main result of this work is Theorem \ref{AA}, of which we give a shortened version as follows: 

\begin{theorem} 
     Any distinguished representative   $w$ of    $W(\tilde A_{n})\backslash W(A_n) $ has a     unique canonical reduced expression:   
\begin{equation}\label{forreferencebelow}
\mathbf{w_a}= \mathcal B(j_1, i_1) \mathcal B(j_2, i_2)   \dots  \mathcal B(j_m, i_m)   \end{equation}
where $m$ is the affine length of $w$ and $(j_s,i_s)_{1\le s \le m}$ is a family of integers satisfying the following {\em pairwise inequalities}:  
\begin{itemize}
\item $ 1\le  j_1 \le n+1$ and  $ 0\le i_1 \le n-1$;     for $ 2\le  s \le m $,   either $i_s=0$ and $j_s=1$, or $ 1\le  i_s \le n-1$ and
$ 1\le  j_s \le n$; 
\item  the sequence $(j_k)$ (resp. $i_k$) is non-increasing (resp.  non-decreasing); 
\item for $ 2\le  s \le m $, if $j_{s-1} > i_{s-1}+1 $, then   $j_s < j_{s-1}   $;  if  
$j_{s} > i_{s}+1 $ then $i_{s} > i_{s-1} $.  
\end{itemize}

Vice versa, any such  family $(j_s,i_s)_{1\le s \le m}$     determines
by (\ref{forreferencebelow})  a distinguished representative 
 $w$    of    $W(\tilde A_{n})\backslash W(A_n) $, in  reduced form, of affine length $m$. We call the very  expression 
$ \mathbf{w_a}:= \mathcal B(j_1, i_1) \mathcal B(j_2, i_2)   \dots  \mathcal B(j_m, i_m) $ {\em    the   affine block}  of  any element in  $wW(A_n)$.  

\end{theorem}

  The proof  establishes in an explicit, algorithmic and independent way the existence of such representatives of minimal length,  given in    canonical form. Appending on the right of an affine block a canonical reduced expression for an element of 
$W(A_n)$ provides a canonical reduced expression for any element in $W(\tilde A_{n})$.   We note that   the lengths of the successive affine bricks in a given affine block form a non-decreasing sequence with first terms  increasing  strictly  up to $n$, and that two of those bricks have the same length if and only if they are identical.    \\

\subsection{ }   We pause here  to  thank the referee of the previous version of this paper who pointed out similarities with section 3.4 in the book \cite{BB} by Bj\"orner and Brenti  on the  one hand, and with the paper \cite{Yilmaz} by  Yilmaz,  \"Ozel, and Ustao\u{g}lu on the other hand. Therefore we studied those references. 

After getting into   the context and language of Gr\"obner-Shirshov bases in   \cite{Yilmaz}, it turns out that the canonical form in Theorem \ref{AA} below is indeed the one given in {\it loc.cit.}  up to taking inverses. Yet, in our work, the single set of parameters is simpler (to read and to use) than the artificially separated parameters  $u$, $v$ and $uv$ in {\it loc.cit.};  the proofs give more insight into  the Coxeter group structure of  
$W(\tilde A_n)$    ({\it loc.cit.} relies on a counting argument); some intermediate calculations are also efficient when working  on consequences. In addition, the present paper is the first in a series in which we will deal with other types of affine Weyl groups, following the same plan as here.

 We turn to the normal form whose existence and uniqueness are established in  \cite[\S 3.4]{BB} for any Coxeter group : it is   the {\it lexicographically first reduced word}, in short the {\it left lex-min form}, for a given order on the set $S$ of generators, hence written 
$S=\{ s_1, \cdots, s_{n+1}\}$ (implicitly and conventionally the lexicographic comparison starts on the left of the word and proceeds from left to right). As observed by Stembridge in \cite[p.1288]{St} (citing Edelman), the normal form 
(\ref{1.1})  for elements of $W(A_n)$   is the reverse, 
  i.e. from right to left,  lexicographically first reduced word, in short the {\it right lex-min form}.   It is  easy to check that  {\em our canonical form is the right lex-min form  for any numbering   $\{ s_1, \cdots, s_{n+1}\}$ of $\{ \sigma_1, \cdots, \sigma_n, a_{n+1}\}$ such that   $s_{n+1}=a_{n+1}$, 
$s_n=\sigma_n$ and $s_{n-1}= \sigma_1$.} 

Our form depends on the choice of   the "affinizing" generator, denoted by $a_{n+1}$; this choice  is essential to define the  affine length and is notably relevant when dealing with the full tower of groups $(W(\tilde A_n))_{n\ge 1}$ as in section \ref{Arr}.  We obtain our canonical form by forcing occurrences of $a_{n+1}$ to be minimal and leftmost. By the previous statement, this implies right-lexicographic minimality (we also  order  the two neighbours of $a_{n+1}$ in the Dynkin diagram -- the effect of this choice is mild, changing it amounts to applying  rules (\ref{productsof2Legobricks})).

Now we make an important remark.  In \cite{BB} existence and uniqueness are a direct consequence of the existence and uniqueness of a minimal element for the lexicographic order. In the present paper, the existence of a form (\ref{forreferencebelow})
  for a distinguished representative of   $W(\tilde A_n)/ W(A_n)$  is easy, but more work has to be done to find necessary and sufficient conditions for such a form to be minimal -- that is, the pairwise inequalities. Getting the general form (\ref{forreferencebelow}),   a product of affine bricks,  from \cite{BB} would be easy, 
but the pairwise inequalities cannot be deduced from  there.

To end this interlude, we thank Bill Casselman for drawing a path for us in the story of normal forms, which developed in the nineties with works of Fokko du Cloux and Bill Casselman in particular   \cite{duCloux_X,Casselman,duCloux_transducer},     and for  pointing out the importance of the result of Brink and Howlett     that Coxeter groups are automatic \cite{BrinkHowlett}. \\ 

\subsection{ } We give three direct consequences of the canonical form. As a  {\bf first consequence},  we show that  through left multiplication by a simple reflection in $S_n$, the canonical form behaves exactly as wished! In other terms: the change made by  left multiplication by a simple reflection is very localized, it happens in at most one affine brick  of the affine  block in such a way that we get a canonical form directly, without passing by the algorithm. This is    Theorem~\ref{lefttimes}, to which we refer  for more detailed statements : 
  
  \begin{theoremnn}[Theorem~\ref{lefttimes}]\label{lefttimesintro}   
Let $ \ \mathbf{w_a}=\mathcal B(j_1, i_1) \mathcal B(j_2, i_2)   \dots  \mathcal B(j_m, i_m)   \  $ be an affine block  of affine length $m\ge 1$, let $w_a$ be the corresponding element of  $W(\tilde A_{n}) $ and let 
$s$ be in $S_n$. Then: 
\begin{enumerate}
\item either $s  w_a$  cannot be expressed by  an affine block, and we have actually 
$l(s w_a)= l(   w_a)+1$  and  $s   w_a=   w_a\sigma_v$ for some $v$, $1\le v \le n$; 

\item  or $s w_a$ has a reduced expression that is an affine block  $ \mathbf{w'_a}$  and,  other than the obvious two cases when  $s=a_{n+1}$ with $ h(j_1, i_1) $ trivial or extremal, the two affine blocks    $  \mathbf{w'_a}$ and $ \mathbf{w_a}$ differ  in one and only one $h(j_s, i_s)$ and one and only one entry there, say   
 $j'_s\ne j_s$ or $i'_s\ne i_s$.   
If $ l(s  w_a)= l( w_a) +1$ we have  $j'_s =j_s-1$ or $i'_s=i_s+1$, while if 
$ l(sw_a)= l( w_a) -1$ we have $j'_s =j_s+1$ or $i'_s=i_s-1$. 
\end{enumerate}
\end{theoremnn}

 This theorem is telling that the canonical form is somehow "stable" by left multiplication by an $s\in S_n$ up to a change in at most one $i_s$ or one $j_s$, but words are but finite sequence of generators! So the canonicity is not bothered by the left multiplications!   
Actually, after getting acquainted with Fokko du Cloux's work as explained above, we saw the similarity of this statement with Theorem 2.6 in \cite{duCloux_transducer}, changing left to right. We chose to leave our statement unchanged with its direct proof, instead of deducing it, however easily, from {\it loc.cit}, because our   proof includes in fact an  automaton to deal with left multiplication of an affine brick, see Lemma  \ref{automaton}.  Even more important, our proof  controls  the path, i.e. the sequence of braid relations,  leading from $ \mathbf{sw_a}$ to  $  \mathbf{w'_a}$, which is essential in an application to light leaves under way.\\

  While for the {\bf  second consequence}:
in section \ref{Rds} devoted to right multiplication,  we compare the descent set $\mathscr{R} (w)$ of $w$ with the descent set $\mathscr{R} (x)$ of $x$, where $w=w_ax$, $x$ in $W(A_n)$,  and $w_a$ is our distinguished representative of $wW(A_{n}) $, having   the affine block $ \mathbf{w_a}$  of $w$ as a reduced expression. We see in Theorem \ref{AA} that $\mathscr{R} (w_a) =  \{ a_{n+1}  \}$. We actually have either  $\mathscr{R} (w)= \mathscr{R} (x)$ or   $\mathscr{R} (w) = \mathscr{R} (x) \cup  \{ a_{n+1}  \}  $.  
We give sufficient conditions on $w$ for $a_{n+1}$ to belong to  $\mathscr{R} (w)$, together with  the {\it hat partner} (see \ref{Bourbaki}) of $a_{n+1}$ multiplied from the right when the multiplication decreases the length.  The cases where $m=1$ and $m=2$ are fully described.\\

A {\bf  third consequence} is to  show that the affine length is preserved in the tower of affine groups defined in \cite{Sadek_Thesis}, that is: When seeing $W(\tilde A_{n-1})$ as a reflection subgroup of $W(\tilde A_{n})$ via the map defined in section   \ref{Arr}:
\begin{eqnarray}
				R_{n}: W(\tilde A_{n-1} ) &\longrightarrow& W(\tilde A_{n} ),\nonumber
							\end{eqnarray}
 then a canonical reduced expression of $(n-1)$-rank is sent to an explicit canonical reduced expression of $(n)$-rank, preserving the affine length, this is Theorem \ref{towerandcanonical}. 

\begin{theoremnn}[Theorem \ref{towerandcanonical}]\label{towerandcanonicalintro}  Let $$
w= h_{n-1}(j_1, i_1) a_{n} h_{n-1}(j_2, i_2) a_{n} \dots  h_{n-1}(j_m, i_m)  a_{n} x, $$ 
 with $x \in W(A_{n-1} )
$,
be the canonical reduced form of an element  $w $  in $ W(\tilde A_{n-1} )$. Then:    
\begin{equation}\label{imageAnminusoneintro}
R_n(w)= h_n(j_1, i_1) a_{n+1} h_n(j_2, i'_2) a_{n+1} \dots  h_n(j_m, i'_m)  a_{n+1}  \lfloor t, n  \rfloor x,  
\end{equation}
where, letting 
$   s=  \max \{k \   /  \    1 \le k \le m  \text{ and }  n-k  - i_k >0 \},$
we have:  
$$
i'_k=i_k  \text{ for } k \le s, \quad i'_k=i_k+1  \text{ for } k > s, \quad   t= n-s+1.  
$$

This implies $L(R_n(w))= L(w)$ and $  l(R_n(w))= l(w)+ 2 L(w),$
hence replacing $a_n$ by $ \sigma_{n} a_{n+1}\sigma_{n} $ in a  reduced expression for $w$ 
  produces a reduced expression for $R_n(w)$ if and only if the expression for $w$ is affine length reduced. 
\end{theoremnn}

The latter theorem gives a necessary and sufficient condition for an element in $ W(\tilde A_{n} )$ to belong to the   image of $ W(\tilde A_{n-1} )$, that is Corollary \ref{con}. A worthwhile consequence is that the corresponding Hecke algebras embed one in the other regardless of the ground ring, that is Corollary \ref{HeckeA}. \\


\subsection{ } We mention briefly farther goals in what follows:

The rigidity of the blocks is a natural field for "cancelling", otherwise called "applying the star operation", to comment this point we need a more advanced calculus, to be done in a forthcoming work centering around the famous Kazhdan-Lusztig cells, and around $W(A_n)$-double cosets since some additional work on the material obtained above (having very strong relations with the second direct consequence) leads to a complete (long) list of canonical reduced expressions of representatives of $W(A_n)$-double classes.

Moreover, in general the canonical form gives us precious data on the space of traces, in particular the embedding of the canonical forms would help a great deal in classifying traces of type Jones on the tower of affine Hecke algebras. Indeed the canonical form given here is easily seen to  coincide (up to a notation), on fully commutative elements, with the normal form (actually, a canonical form) established in 
\cite{Sadek_2016}, which is a crucial ingredient in classifying Markov traces on the tower of affine Temperley-Lieb algebras of type $\tilde A$ in 
 \cite{Sadek_2013_2}. 

In yet another direction, namely an algorithmic way to go towards and come back from the Bernstein presentation, the canonical form indeed gives  long ones easily, definitely the third consequence is a tricky way to shorten the two algorithms. It gives as well a way to enumerate elements by affine length for example.  

Experts of the theory of light leaves (born in \cite{Lib08}) would be interested in such a canonical form, since their computation starts usually with a reduced expression, thus it is even better to have it canonical. For instance, in an ongoing work starting from the canonical form, David Plaza and the author are providing an explicit and simple way to produce  "canonical"  light  leaves bases for the group $W(\tilde A_{n} )$, where usually the construction depends on many non-canonical choices. It is worth to mention that the algorithm to arrive to our canonical form can start from any reduced expression and not only from affine length reduced ones.  \\

The work is self contained and accessible for any who is familiar with Coxeter systems or otherwise want-to-be, we count only on the simplicity of the canonical form, which shows that $ W(\tilde A_{n} )$ is way more "tamed" than Coxeter theory amateurs tend to think, or at least than the author used to think.

\section{A canonical reduced expression}\label{affA}

Let $(W(\Gamma),S)$ be a Coxeter system with associated Coxeter diagram $\Gamma$. Let $w\in W(\Gamma)$ or simply $W$. We denote by $l(w)$ the  length of   $w$ (with respect to $S$).   We define $\mathscr{L} (w) $ to be the set of $s\in S$ such that $l(sw)<l(w)$, in other terms  $s$ appears at the left edge of some reduced expression of $w$.  We define $\mathscr{R}(w)$ similarly, on the right.   If $S'$ is a subset of $S$ and $W'$ is the (parabolic) subgroup of $W$ generated by $S'$, each right coset 
$w W'$ has a unique element of minimal length, say $a$, and for any $x \in W'$ we have $l(ax)=l(a)+l(x)$
(see for instance \cite[Lemma 9.7]{Lusztig}). We call $a$ {\it the distinguished representative} of its coset $a W'$. \\
 
\subsection{Canonical form in 	$W(A_{n})$}	
	Let $n\ge 2$. 		Consider the $A$-type Coxeter group with $n$ generators $W(A_{n})$, with the following Coxeter diagram:
			
			\begin{figure}[ht]
				\centering
				\begin{tikzpicture}[scale=0.6]

  \filldraw (0,0) circle (2pt);
  \node at (0,-0.5) {$\sigma_{1}$}; 
   
  \draw (0,0) -- (1.5, 0);

  \filldraw (1.5,0) circle (2pt);
  \node at (1.5,-0.5) {$\sigma_{2}$};

  \draw (1.5,0) -- (3, 0);

  \node at (3.5,0) {$\dots$};

  \draw (4,0) -- (5.5, 0);
  
  \filldraw (5.5,0) circle (2pt);
  \node at (5.5,-0.5) {$\sigma_{n-1}$};
 
  \draw (5.5,0) -- (7, 0);
  
  \filldraw (7,0) circle (2pt);
  \node at (7,-0.5) {$\sigma_{n}$};

               \end{tikzpicture}
			\end{figure}

			Now let $W(\tilde{A_{n}}) $ be the affine Coxeter group of $\tilde{A}$-type with  set of  $n+1$ generators $ S_n= \left\{ \sigma_{1}, \sigma_{2}, \dots,   \sigma_{n}, a_{n+1}   \right\}$,  perfectly determined   by the  following Coxeter diagram: 
			\begin{figure}[h!]
				\centering
				\begin{tikzpicture}[scale=0.6]

 \node at (0,0.5) {$\sigma_{1}$}; 
  \filldraw (0,0) circle (2pt);
   
  \draw (0,0) -- (1.5, 0);
  
  \node at (1.5,0.5) {$\sigma_{2}$};
  \filldraw (1.5,0) circle (2pt);

  \draw (1.5,0) -- (5.5, 0);

  \node at (5.5,0.5) {$\sigma_{n-1}$};
  \filldraw (5.5,0) circle (2pt);
 
  \draw (5.5,0) -- (7, 0);
  
  \node at (7,0.5) {$\sigma_{n}$};
  \filldraw (7,0) circle (2pt);

  \draw (7,0) -- (3, -3);
  
  \filldraw (3, -3) circle (2pt);
  \node at (3, -3.5) {$a_{n+1}$};

  \draw (3, -3) -- (0, 0);
               \end{tikzpicture}
			\end{figure}

  Since $W(A_n)$ is a parabolic subgroup of $W(\tilde A_n)$, we have  for any $v \in W(\tilde A_n)$, $v\ne 1$: 
\begin{equation}\label{parabolic}
\mathscr{R} (v) =  \{ a_{n+1}  \}  \iff  \forall x \in W(A_n) \quad  l(vx)= l(v)+l(x). 
\end{equation}  
			
In the group $W(A_{n})$  we let:  
$$
\begin{aligned}
\lfloor i,j \rfloor &= \sigma_i \sigma_{i+1} \dots \sigma_j   \   \text{ for } n\ge j\ge i \ge 1    \  \text{ and } \    \lfloor n+1,n \rfloor = 1, 
\\
\lceil  i,j \rceil  &= \sigma_i \sigma_{i-1} \dots \sigma_j   \    \text{ for } 1\le j\le i \le n \    \text{ and }  \   \lceil  0,1 \rceil  = 1, 
\\
\qquad  \quad    h(r,i) & =   \lfloor r,n \rfloor \lceil  i,1 \rceil 
  \quad   \text{ for }  0\le i \le n-1, 1\le r \le n+1.  
  \end{aligned}
$$

 \medskip

			One can prove by induction on $n$  (considering right   cosets of $W(A_{n-1})$ in $W(A_{n}) $) the following well-known theorem.

\begin{theorem}\label{1_2}
    $W(A_n)$ is the set of elements of the following canonical reduced form: 
 \begin{equation}\label{Stembridge}
 \lfloor i_1, j_1 \rfloor \lfloor i_2, j_2 \rfloor \dots  \lfloor i_s, j_s \rfloor  
 \end{equation} 
 with $n \ge j_1 > \dots > j_s  \ge 1$ and 
$ j_t \ge i_t \ge 1$ for $s \ge t \ge 1$. Identity is to be considered the case where $s=0$. 
\end{theorem}

 Notice that if $ \sigma_{n} $ appears in form 
(\ref{Stembridge}), then  $ \sigma_{n} $  will certainly appear only once, and it is to be equal to $\sigma_{j_{1}}$.

		\begin{definition}

	An element $u$ in $W(A_{n})$ is called {\rm extremal}  if  both $ \sigma_{n} $ and $ \sigma_{1} $ belong to $Supp(u)$. 
     \end{definition}

\begin{lemma}\label{extremal1}   
Let   $P$ be the parabolic subgroup  of $W(A_{n})$  
generated by   $
 \sigma_2, \dots , \sigma_{n-1} .$  
 An   element in $W(A_{n})$   can uniquely be written in the following reduced form:
$$
 h(r,i) \; x,  \quad 0 \le i \le n-1,  \   1 \le r \le n+1,   \   x \in P. 
$$
The element is extremal if and only if either $r=1$ and $i=0$, or   $i \ge 1$ and $   r \le n  $.
\end{lemma}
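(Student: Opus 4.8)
The plan is to prove this by analyzing the usual canonical form from Theorem \ref{1_2}. First I would take an element $u \in W(A_n)$ and write it in the form (\ref{Stembridge}), $u = \lfloor i_1, j_1 \rfloor \cdots \lfloor i_s, j_s\rfloor$ with $n \ge j_1 > \cdots > j_s \ge 1$. As the remark after Theorem \ref{1_2} points out, $\sigma_n$ appears in $u$ if and only if $j_1 = n$. Peeling off the first factor, I would set $r := i_1$ if $j_1 = n$ (so the first factor is $\lfloor i_1, n\rfloor = \lfloor r, n\rfloor$), and $r := n+1$ otherwise (so $\lfloor r,n\rfloor = 1$ and $u = \lceil i,1\rceil\, x$ with the $\lfloor r,n\rfloor$ part trivial). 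The remaining product $\lfloor i_2,j_2\rfloor\cdots\lfloor i_s,j_s\rfloor$ lies in the parabolic subgroup generated by $\sigma_1,\dots,\sigma_{n-1}$; applying the same bracket analysis to $\sigma_1$ inside that subgroup (now $\sigma_1$ occurs iff one of the brackets starts at index $1$, and since the $j$'s are strictly decreasing there is at most one such bracket, namely possibly the last one $\lfloor 1, j_s\rfloor$) lets me extract a factor $\lceil i, 1\rceil = \sigma_i\sigma_{i-1}\cdots\sigma_1$ on the left of the part not involving $\sigma_1$. After reindexing, this rewrites $u$ as $h(r,i)\, x$ with $x$ in $P = \langle \sigma_2,\dots,\sigma_{n-1}\rangle$, $0 \le i \le n-1$, $1 \le r \le n+1$. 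Reducedness of this expression follows from the length additivity built into the usual canonical form, i.e. $l(u) = \sum(j_t - i_t + 1)$, together with the fact that $h(r,i)$ and $x$ together account for all the generators with the right multiplicities.

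For uniqueness: suppose $h(r,i)\, x = h(r',i')\, x'$ with $x, x' \in P$. Since $\sigma_n \in \mathrm{Supp}$ of the left side iff $r \le n$ (because $h(r,i)$ contains $\sigma_n$ exactly when $\lfloor r,n\rfloor \ne 1$, i.e. $r \le n$, and $x$ is $\sigma_n$-free), we get $r \le n \iff r' \le n$; if both equal $n+1$ then the $\lceil i,1\rceil$ parts are forced by where $\sigma_1$ sits, and similarly one reads off $i$, $i'$, reducing to $x = x'$. If $r, r' \le n$, then looking at the unique occurrence of $\sigma_n$ and tracking the canonical form to its left pins down $r = r'$, then $i = i'$, then $x = x'$. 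Here I would simply invoke the uniqueness clause of Theorem \ref{1_2} directly: both sides, expanded into usual canonical form, must coincide bracket-by-bracket, and $h(r,i)$, $h(r',i')$, $x$, $x'$ are each (essentially) segments of that canonical form, so the data match.

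Finally, the extremality characterization: $u = h(r,i)\, x$ is extremal iff $\sigma_n \in \mathrm{Supp}(u)$ and $\sigma_1 \in \mathrm{Supp}(u)$. Since $x \in P$ is free of both $\sigma_1$ and $\sigma_n$, we have $\sigma_n \in \mathrm{Supp}(u) \iff \sigma_n \in \mathrm{Supp}(h(r,i)) \iff r \le n$, and $\sigma_1 \in \mathrm{Supp}(u) \iff \sigma_1 \in \mathrm{Supp}(h(r,i)) \iff \lceil i,1\rceil \ne 1 \iff i \ge 1$, \emph{or} the $\lfloor r,n\rfloor$ part itself reaches down to $\sigma_1$, which happens exactly when $r = 1$. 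So $u$ is extremal iff ($r \le n$ or $r=1$) and ($i \ge 1$ or $r = 1$); disentangling the cases, when $r = 1$ both conditions hold automatically (note $h(1,0) = \lfloor 1,n\rfloor = \sigma_1\cdots\sigma_n$ already contains both $\sigma_1$ and $\sigma_n$), and when $r \ge 2$ we need $r \le n$ and $i \ge 1$. This gives precisely the stated dichotomy: either $r = 1$ and $i = 0$, or $i \ge 1$ and $r \le n$ — with the understanding that the case $r=1$, $i \ge 1$ falls under the second clause.

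The main obstacle I anticipate is the bookkeeping in the uniqueness part: making sure the extraction of $h(r,i)$ from the front of the usual canonical form is genuinely forced, especially the interaction between the $\lfloor r,n\rfloor$ segment and the $\lceil i,1\rceil$ segment when $r$ is small (e.g. distinguishing $h(1,i)$ from other forms), and handling the boundary conventions $\lfloor n+1,n\rfloor = 1$, $\lceil 0,1\rceil = 1$ cleanly. Everything else is a routine consequence of Theorem \ref{1_2}.
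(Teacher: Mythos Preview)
Your existence argument contains a genuine error. You claim that after peeling off $\lfloor r,n\rfloor$, the remaining product $\lfloor i_2,j_2\rfloor\cdots\lfloor i_s,j_s\rfloor$ has \emph{at most one} bracket starting at index $1$, ``since the $j$'s are strictly decreasing.'' But the strict decrease of the $j_t$'s says nothing about the $i_t$'s: for instance $u = \lfloor 1,3\rfloor\lfloor 1,2\rfloor \in W(A_4)$ is a perfectly valid Stembridge form with two brackets both starting at $1$. So your extraction of a single factor $\lceil i,1\rceil$ on the left is not justified by the argument you give; it requires genuinely rewriting the remaining element (as one checks in the example, $\lfloor 1,3\rfloor\lfloor 1,2\rfloor = \lceil 2,1\rceil\cdot\sigma_3\sigma_2\sigma_3$, which is not read off from the bracket form directly). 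Your uniqueness and extremality paragraphs are fine.

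The paper's proof avoids this bookkeeping entirely by arguing with coset representatives rather than the Stembridge form: the $\lfloor j,n\rfloor$, $1\le j\le n+1$, are the distinguished representatives for $W(A_n)/W(A_{n-1})$, and then (via the obvious Dynkin flip) the $\lceil i,1\rceil$, $0\le i\le n-1$, are the distinguished representatives for $W(A_{n-1})/P$. Composing these two decompositions gives existence, uniqueness, and reducedness of $h(r,i)\,x$ in one stroke, with no manipulation of brackets. If you want to stay with your direct approach, the cleanest fix is to replace the faulty step by exactly this second coset observation applied to the element of $W(A_{n-1})$ left over after extracting $\lfloor r,n\rfloor$.
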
       			

\begin{proof}  The  elements 
$\lfloor j,n \rfloor$  for  $1 \le j \le n+1$ constitute  the set of distinguished representatives for 
$ W(A_{n})/ W(A_{n-1})$, as is well-known, actually  the first step for proving Theorem \ref{1_2}. 
An easy transformation gives the set of elements   $\lceil  i,1 \rceil $ 
for $0 \le i \le n-1$  as the set of distinguished representatives for $ W(A_{n-1})/P$, hence  the statement.
\end{proof}

  As a consequence, we can define what we call  the {\em extremal canonical form}  of any $w\in W(A_n)$: 
	\begin{equation}\label{extremal}
 h(r,i) \lfloor i_1, j_1 \rfloor \lfloor i_2, j_2 \rfloor \dots  \lfloor i_s, j_s \rfloor  
 \end{equation}
with $1\le r \le n+1 $, $0\le i \le n-1 $, $n-1 \ge j_1 > \dots > j_s  \ge 2$ and 
$ j_t \ge i_t \ge 2$ for $s \ge t \ge 1$.  This form could be used everywhere below 
 instead of  the usual canonical form (\ref{Stembridge}).

\subsection{Affine length}  

\begin{definition}\label{AL}
				We call {\rm	 affine length reduced expression} of a given $u$ in $W(\tilde A_{n})$ any reduced expression with minimal occurrence of $a_{n+1}$, and we {\rm call affine length} of $u$ this minimum, we denote  it by $L(u)$. 
			
			\end{definition}

\begin{remark} We gave the definition of affine length for fully commutative elements in \cite{Sadek_2016}: for such elements the number of occurrences of $a_{n+1}$ in a reduced expression does not depend  on the reduced expression. 

\end{remark} 

\begin{remark}\label{affinelength} The affine length is constant on the double classes of $W(A_{n})$ in $W(\tilde A_{n})$.  It satisfies, for any $v, w \in W(\tilde A_{n})$: 
$$  | L(v)- L(w) | \le L(vw) \le L(v) + L(w).$$ 
\end{remark}

\begin{lemma}\label{lemmafullA}
Let $w$ be in $W(\tilde A_{n})$ with $L(w) =m \ge 2$. 
Fix  an affine length reduced expression of $w$ as follows: 
$$
w =  u_1 a_{n+1} u_2 a_{n+1} \dots u_m a_{n+1} u_{m+1}  
\   \text{ with }  u_i \in W(A_{n})  \text{ for } 1\le i \le m+1 .  
$$ 
Then $u_2, \cdots, u_m$ are extremal and there is a reduced writing of $w$ of the  form: 
 \begin{equation}\label{forme1A}
w = h(j_1, i_1) a_{n+1} h(j_2, i_2) a_{n+1} \dots  h(j_m, i_m)  a_{n+1} v_{m+1},
 \end{equation} 

\noindent
where   $  v_{m+1}$ is  an element in $ W(A_{n})$,  $ 1\le  j_1 \le n+1$, $ 0\le i_1 \le n-1$, and for $ 2\le  s \le m $, either $i_s=0$ and $j_s=1$, or $ 1\le  i_s \le n-1$ and
$ 1\le  j_s \le n$.     
 \end{lemma}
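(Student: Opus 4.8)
The plan is to argue in two stages, first normalising each intermediate factor $u_s$ ($2\le s\le m$) into the shape $h(j_s,i_s)$ times something absorbed elsewhere, and then checking that the absorption does not destroy minimality of the number of $a_{n+1}$'s.

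First I would prove the extremality claim. Suppose some $u_s$ with $2\le s\le m$ is not extremal, so either $\sigma_1\notin Supp(u_s)$ or $\sigma_n\notin Supp(u_s)$. Then $u_s$ lies in the parabolic subgroup generated by $S_n\setminus\{\sigma_1\}$ or by $S_n\setminus\{\sigma_n\}$; in either case a standard Coxeter fact says $u_s$ commutes with $a_{n+1}$ is \emph{false} in general, so instead I would use the braid/commutation structure of the affine diagram: $a_{n+1}$ does not commute with $\sigma_1$ nor with $\sigma_n$, but it commutes with every $\sigma_k$ for $2\le k\le n-1$. Hence if, say, $\sigma_1$ is absent from $u_s$, then writing $u_s$ in usual canonical form (Theorem \ref{1_2}) shows $u_s$ is a product of generators all commuting with $a_{n+1}$ except possibly $\sigma_n$; more carefully, I would show that the leftmost and rightmost letters that fail to commute with $a_{n+1}$ can be moved across one of the two adjacent $a_{n+1}$'s, and then the two flanking $a_{n+1}$'s can be combined using $a_{n+1}^2=1$, producing an expression of $w$ with fewer occurrences of $a_{n+1}$ — contradicting $L(w)=m$. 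This is the genuinely delicate point: one must track exactly which letters of $u_s$ obstruct the commutation and verify that after sliding them out, the remaining core between the two $a_{n+1}$'s is trivial enough to cancel. I expect this to be the main obstacle, and I would handle it by invoking the extremal canonical form (\ref{extremal}): $u_s = h(r,i)\,y$ with $y\in P$, and $u_s$ non-extremal forces (by Lemma \ref{extremal1}) that $h(r,i)$ is trivial or reduces, and since $y$ commutes with $a_{n+1}$ one slides $y$ to the right and $h(r,i)$ becomes short enough that $a_{n+1} h(r,i) a_{n+1}$ collapses.

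Second, granting that $u_2,\dots,u_m$ are extremal, I would build the form (\ref{forme1A}). Write each $u_s$ for $2\le s\le m$ in the extremal canonical form of the Corollary: $u_s = h(j_s,i_s)\,z_s$ with $z_s\in P$ and, because $u_s$ is extremal, Lemma \ref{extremal1} gives the constraints $1\le i_s\le n-1$, $1\le j_s\le n$ (the alternative $r=1,i=0$ being the degenerate case $i_s=0$, $j_s=1$ allowed in the statement). Since $z_s\in P$, every letter of $z_s$ commutes with $a_{n+1}$, so $z_s a_{n+1} = a_{n+1} z_s$; I push $z_s$ to the right past the following $a_{n+1}$ and merge it into $u_{s+1}$. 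Doing this successively for $s=2,3,\dots,m$ (and treating $u_1$ separately: write $u_1=h(j_1,i_1)z_1$ with no extremality constraint, so $1\le j_1\le n+1$, $0\le i_1\le n-1$, and absorb $z_1$ to the right the same way), all the $z_s$ get swept into the final factor, which becomes $v_{m+1}\in W(A_n)$. The result is exactly (\ref{forme1A}) with the stated ranges on $(j_s,i_s)$.

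Finally I would check reducedness. The number of $a_{n+1}$'s in (\ref{forme1A}) is still $m=L(w)$, so the expression is affine-length reduced; and since each absorption step was a substitution of a commuting relation (an equality of group elements that does not change length, as sliding $z_s\in P$ across $a_{n+1}$ is length-preserving on the relevant subword) together with the fact that $l(h(j_s,i_s))+l(z_s)=l(u_s)$ from the canonical form being reduced, the total length is unchanged. Hence (\ref{forme1A}) is a reduced expression of $w$ of the required shape, completing the proof. The only subtlety to spell out in the write-up is why concatenating $h(j_1,i_1)a_{n+1}h(j_2,i_2)\cdots$ stays reduced overall rather than merely factor-by-factor; here I would appeal to (\ref{parabolic}), noting $\mathscr R(h(j_s,i_s)a_{n+1})=\{a_{n+1}\}$ so that no cancellation occurs at the junctions, which is precisely the kind of bookkeeping the affine-length hypothesis is designed to make automatic.
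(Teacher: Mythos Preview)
Your overall strategy matches the paper's: establish extremality of $u_2,\dots,u_m$ via the $h(r,i)\cdot P$ decomposition and the braid relations $a_{n+1}\sigma_1 a_{n+1}=\sigma_1 a_{n+1}\sigma_1$, $a_{n+1}\sigma_n a_{n+1}=\sigma_n a_{n+1}\sigma_n$, then sweep the $P$-parts rightward through the $a_{n+1}$'s.

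There is, however, a genuine gap in your sweeping step. You decompose each $u_s=h(j_s,i_s)\,z_s$ \emph{independently} and then assert that ``all the $z_s$ get swept into the final factor''. But $z_s\in P$ commutes with $a_{n+1}$, not with $h(j_{s+1},i_{s+1})$: once $z_s$ crosses the next $a_{n+1}$ it lands on $h(j_{s+1},i_{s+1})$, and in general $z_s\,h(j_{s+1},i_{s+1})\notin h(j_{s+1},i_{s+1})P$ since the elements $h(r,i)$ represent \emph{left} cosets $W(A_n)/P$, not double cosets. The correct procedure --- and this is exactly what the paper does --- is to work strictly left to right: write $u_1=h(j_1,i_1)x_1$, push $x_1$ past $a_{n+1}$, then \emph{re-decompose} the merged element $x_1u_2=h(j_2,i_2)x_2$, and iterate. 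The $(j_s,i_s)$ in the final form are therefore not determined by the original $u_s$ alone. You must also check that $x_{s-1}u_s$ remains extremal for $s\ge2$; this holds because $x_{s-1}\in P$ contains neither $\sigma_1$ nor $\sigma_n$ in its support, so $\sigma_1,\sigma_n\in\mathrm{Supp}(x_{s-1}u_s)$ iff $\sigma_1,\sigma_n\in\mathrm{Supp}(u_s)$.

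With this correction, reducedness follows from length counting: each move (commuting $x_s$ past $a_{n+1}$, or replacing a subword by a reduced word for the same element) does not increase word length, and the final word still expresses $w$, hence has length exactly $l(w)$. Your closing appeal to $\mathscr R(h(j_s,i_s)a_{n+1})=\{a_{n+1}\}$ is both forward-referencing (that is Lemma~\ref{Rofw}, proved later) and inapplicable: property~(\ref{parabolic}) controls right multiplication by $W(A_n)$, not by another $h(\cdot,\cdot)a_{n+1}$, and the paper notes immediately after the lemma that general expressions of shape~(\ref{forme1A}) need \emph{not} be reduced. All you need here is that \emph{this particular} expression, obtained from a reduced one by length-non-increasing rewrites, is reduced.
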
    

\begin{proof}
Let $y \in  W(A_{n})$ such that $ a_{n+1} y a_{n+1}$ is  an affine length reduced  expression. 
We use Lemma \ref{extremal1} to write  $y=h(r,i) \; x $  with $ x \in P$. Since $x$ and $a_{n+1}$ 
commute, the element  $ a_{n+1}   h(r,i)   a_{n+1}$ must be affine length reduced. 
Since the braids  
$ a_{n+1} \sigma_1 a_{n+1}$ and $a_{n+1} \sigma_n a_{n+1}$  are to be excluded, both $\sigma_1$ and $\sigma_n$ must appear in $h(r,i)$ so $y$ is extremal.

 Now we proceed from left to right, using Lemma \ref{extremal1}  at each step. We write 
 $u_1= h(j_1, i_1) x_1$ with $x_1 \in P$, so that $u_1 a_{n+1} u_2 = h(j_1, i_1)a_{n+1}  x_1u_2$. We repeat  with $ x_1u_2 a_{n+1}=  h(j_2, i_2)a_{n+1} x_2$   with $x_2 \in P$ and so on, getting (\ref{forme1A}). 	We started with a reduced expression of $w$ so we obtain a reduced expression.
\end{proof}

Yet,  an expression as (\ref{forme1A}) may be reduced without being affine length reduced, as  in the following example: 
$$
a_{n+1} \sigma_n \cdots \sigma_1 a_{n+1} \sigma_1 \cdots \sigma_n a_{n+1} 
=   \sigma_n a_{n+1} \sigma_n \cdots \sigma_1 \cdots \sigma_n a_{n+1} \sigma_n. 
$$

\begin{lemma}\label{Rofw}
An element of affine length $1$ can be written in a unique way as 
$$
h(r,i) a_{n+1} x, \qquad  0 \le  i  \le  n-1, \   1\le r \le n+1, \    x \in W(A_n), 
$$
and such an expression is always reduced. The commutant of $a_{n+1}$ in $W(A_n)$ is $P$. 
 \end{lemma}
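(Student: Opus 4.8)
My plan is to derive all four assertions from Lemma~\ref{extremal1} together with one key fact, which I would isolate and prove first: \emph{for every admissible pair $(r,i)$ the element $v:=h(r,i)\,a_{n+1}$ satisfies $l(v)=l(h(r,i))+1$ and $\mathscr{R}(v)=\{a_{n+1}\}$.} Granting this, existence is immediate. If $L(w)=1$ then $w\notin W(A_n)$ and $w$ has an expression $u_1a_{n+1}u_2$ with $u_1,u_2\in W(A_n)$; writing $u_1=h(r,i)\,y$ with $y\in P$ by Lemma~\ref{extremal1}, and using that a word in $\sigma_2,\dots,\sigma_{n-1}$ commutes with $a_{n+1}$ (these generators are not neighbours of $a_{n+1}$ in the diagram), we get $w=h(r,i)\,a_{n+1}\,(yu_2)$ with $x:=yu_2\in W(A_n)$.

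Reducedness and uniqueness then follow formally. By \eqref{parabolic}, the equality $\mathscr{R}(v)=\{a_{n+1}\}$ gives $l(h(r,i)a_{n+1}x)=l(v)+l(x)=l(h(r,i))+1+l(x)$ for every $x\in W(A_n)$; hence every such expression is reduced, and since it involves the letter $a_{n+1}$ the element it represents is not in $W(A_n)$, while still having an expression with a single $a_{n+1}$ --- so its affine length is exactly $1$. Thus $(r,i,x)\mapsto h(r,i)a_{n+1}x$ sends admissible triples into the affine-length-$1$ elements, and by existence it is surjective. For injectivity, if $h(r,i)a_{n+1}x=h(r',i')a_{n+1}x'=:w$, then by the key fact and \eqref{parabolic} both $h(r,i)a_{n+1}$ and $h(r',i')a_{n+1}$ are the unique minimal-length element of the coset $wW(A_n)$, so they coincide; multiplying by $a_{n+1}$ on the right gives $h(r,i)=h(r',i')$, whence $(r,i)=(r',i')$ by the uniqueness in Lemma~\ref{extremal1} (trivial $P$-part), and then $x=x'$.

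It remains to prove the key fact, which I regard as the one genuinely non-formal step. Since the support of $h(r,i)$ lies in $\{\sigma_1,\dots,\sigma_n\}$, we have $a_{n+1}\notin\mathscr{R}(h(r,i))$, so the written word for $v$ is reduced, $l(v)=l(h(r,i))+1$, and $a_{n+1}\in\mathscr{R}(v)$. Suppose $\sigma_j\in\mathscr{R}(v)$. If $2\le j\le n-1$, then $\sigma_j$ commutes with $a_{n+1}$, so $v\sigma_j=h(r,i)\sigma_j a_{n+1}$ and $l(v\sigma_j)=l(h(r,i)\sigma_j)+1$ (no $a_{n+1}$ occurs in $h(r,i)\sigma_j$); from $l(v\sigma_j)<l(v)$ one gets $\sigma_j\in\mathscr{R}(h(r,i))$. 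But Lemma~\ref{extremal1} exhibits $h(r,i)$ as the minimal-length representative of the coset $h(r,i)P$ (the writing $h(r,i)z$, $z\in P$, being reduced for every $z$), so $\mathscr{R}(h(r,i))\cap\{\sigma_2,\dots,\sigma_{n-1}\}=\emptyset$, a contradiction. If $j\in\{1,n\}$, then $\sigma_j a_{n+1}$ has order $3$; since $\sigma_j,a_{n+1}\in\mathscr{R}(v)$, the element $v$ has a reduced expression ending with the longest element $a_{n+1}\sigma_j a_{n+1}$ of the dihedral parabolic $\langle\sigma_j,a_{n+1}\rangle$, say $v=u\cdot a_{n+1}\sigma_j a_{n+1}$ with $l(v)=l(u)+3$; multiplying by $a_{n+1}$ on the right and cancelling, $h(r,i)=u\,a_{n+1}\sigma_j$, a word of length $l(u)+2=l(v)-1=l(h(r,i))$, hence a reduced word for $h(r,i)$ containing $a_{n+1}$ --- impossible, as $h(r,i)\in W(A_n)$ and elements of the parabolic $W(A_n)$ have all their reduced words in $\sigma_1,\dots,\sigma_n$. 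Therefore $\mathscr{R}(v)=\{a_{n+1}\}$; the delicate part here is precisely the exclusion of $\sigma_1$ and $\sigma_n$.

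For the commutant: $P$ centralizes $a_{n+1}$ by inspection of the diagram. Conversely, in the standard reflection realization $a_{n+1}=s_{\alpha_0}$ with $\alpha_0=\delta-\theta$, where $\theta=\alpha_1+\dots+\alpha_n$ is the highest root of $A_n$ and $\delta$ is the imaginary root fixed by $W(A_n)$; hence for $x\in W(A_n)$ one has $x a_{n+1}x^{-1}=s_{\,\delta-x\theta}$, which equals $a_{n+1}=s_{\,\delta-\theta}$ if and only if $x\theta=\theta$ (the possibility $\delta-x\theta=-\alpha_0$ is ruled out by the coefficient of $\delta$). The stabilizer of the dominant root $\theta$ in $W(A_n)$ is the parabolic subgroup generated by the simple reflections fixing it, i.e.\ by $\sigma_2,\dots,\sigma_{n-1}$, which is $P$; so $x\in P$. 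This is the only place I would invoke the reflection representation; a purely combinatorial argument is possible but less transparent.
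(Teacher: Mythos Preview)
Your proof is correct and follows the same overall architecture as the paper: isolate the key fact $\mathscr{R}(h(r,i)a_{n+1})=\{a_{n+1}\}$ and derive existence, reducedness, and uniqueness formally from it via \eqref{parabolic} and Lemma~\ref{extremal1}. Two steps differ in method. For excluding $\sigma_1,\sigma_n$ from $\mathscr{R}(v)$, the paper applies the exchange condition directly: if $l(v\sigma_k)<l(v)$ then some $\sigma_u$ in the written word for $h(r,i)$ is the hat partner (the $a_{n+1}$ itself is ruled out trivially), giving $h(r,i)a_{n+1}\sigma_k=\hat h(r,i)a_{n+1}$, whence after a braid $h(r,i)\sigma_k a_{n+1}\sigma_k=\hat h(r,i)$, impossible on supports. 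Your dihedral-parabolic argument (both $a_{n+1}$ and $\sigma_k$ in $\mathscr{R}(v)$ force a reduced tail $a_{n+1}\sigma_k a_{n+1}$, hence $h(r,i)=u\,a_{n+1}\sigma_k$ reduced) reaches the same contradiction via a more structural route. For the commutant, the paper stays purely combinatorial: writing $x=h(r,i)p$ with $p\in P$, the equation $xa_{n+1}=a_{n+1}x$ becomes an equality of two canonical forms $h(r,i)a_{n+1}p = h(n+1,0)a_{n+1}x$, and uniqueness forces $h(r,i)=1$, $x=p\in P$. Your root-system computation via the stabilizer of $\theta$ is correct but imports the reflection realization; the paper's one-line derivation keeps everything inside the word-combinatorics already in place, which is worth noting since it shows the commutant statement is not an independent input but a corollary of the uniqueness you have just proved.
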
 

\begin{proof} 	 The existence of such an expression comes from Lemma \ref{extremal1}. 
Showing that the expression is reduced amounts,  by (\ref{parabolic}), to showing that 
$\mathscr{R} (h(r, i) \  a_{n+1} ) =  \{ a_{n+1}  \}  $. Indeed, if   $2\le k \le n-1$, then  $w \sigma_k= h(r, i)\sigma_k a_{n+1}$ has length $l(w)+1$. Now assume  $k=1$ or $k=n$, and $l(w \sigma_k ) < l(w)$. By the exchange condition there is a 
$\sigma_u$   
appearing in $ h(r,i)  $ 
such that $h(r, i) a_{n+1} \sigma_k=  \hat h(r,i) a_{n+1}$ where   $ \hat h(r,i)$ is what becomes $h(r,i)$ after omitting $\sigma_u$. We multiply by $ a_{n+1}$  on the right and get 
$h(r, i) \sigma_ka_{n+1} \sigma_k=  \hat h(r,i) $,  
impossible  considering supports.

Uniqueness amounts to proving that  $h(j,i) a_{n+1} =  h(j',i') a_{n+1} x$ (with obvious notation) implies 
 $x=1$, immediate from   $\mathscr{R} (h(j,i) a_{n+1}) = \{ a_{n+1}\}$ and (\ref{parabolic}).   The last assertion is a consequence of uniqueness. 
\end{proof}

\begin{definition}
We call {\em affine brick} and denote by $ \mathcal B(r, i) $, or  $ \mathcal B_n(r, i) $ when we need to emphasize the dependency in $n$, the expression 
$$
\mathcal B(r, i) =  h(r,i) a_{n+1} , \qquad  0 \le  i  \le  n-1, \   1\le r \le n+1.   
$$
The length of an affine brick $\mathcal B(r, i)$ is $n+1 + i+1 -r$. 
We call an affine brick {\em short} if its length is at most $n$, i.e. $r>i+1$. 
Otherwise we call it {\em long}. 
\end{definition}  

We will keep in mind that the two segments of a {\em short} affine brick commute: 
$$
\mathcal B(r, i) =  \lfloor r,n \rfloor \lceil  i,1 \rceil a_{n+1} = 
 \lceil  i,1 \rceil\lfloor r,n \rfloor a_{n+1}  \quad  \text{ for } r>i+1.   
$$
Other cases are listed in (\ref{productsof2Legobricks}) below.

\subsection{Affine length reduced expressions}  
 
The property  $\mathscr{R} (h(r, i) \  a_{n+1} ) =  \{ a_{n+1}  \}  $  does not extend to elements in form   (\ref{forme1A}) with $v_{m+1}=1$. For instance, 
the relations :
\begin{equation}\label{braidsleftright}
\begin{aligned} 
\sigma_na_{n+1} \sigma_n  \sigma_1a_{n+1} &=      
a_{n+1} \sigma_n   \sigma_1  a_{n+1}   \sigma_1   \\
\sigma_1 a_{n+1} \sigma_n  \sigma_1a_{n+1} &=    
a_{n+1} \sigma_n   \sigma_1  a_{n+1}   \sigma_n 
\end{aligned}
\end{equation}
imply: 
$\sigma_1 \in  \mathscr{R} ( \sigma_na_{n+1} \sigma_n  \sigma_1a_{n+1} )$ and  $\sigma_n \in \mathscr{R} ( \sigma_1a_{n+1} \sigma_n  \sigma_1a_{n+1} )$. 
So  the general form   (\ref{forme1A}) need not be reduced, we must impose more conditions. As in  Lemma \ref{lemmafullA}, we want  to push to the right the simple reflections 
$\sigma_k$, 
$1\le k \le n$, whenever possible.  To do this we bring out the following formulas:

\begin{proposition}\label{exchangeformulas} 
 Let $1\le r \le n+1$, $0 \le u  \le n-1$, $1\le s \le n$ and $1\le v \le n-1$. We have the following rules. \\

\begin{enumerate}
\item 
If  $r > u+1$ and   $s \ge r$: 
$ \quad \mathcal B(r,u)  \mathcal B(s, v)    =  \mathcal B(s+1,u)  \mathcal B(r, v)    \sigma_1. $  \\
\item  If  $s > u+1 \ge v+1$: 
$\quad \mathcal B(r,u)  \mathcal B(s, v)    =  \mathcal B(r,v-1)  \mathcal B(s,u)    \sigma_n. $  \\  
\item   If  $v+1 < s \le u+1$  : 
$\quad  \mathcal B(r,u)  \mathcal B(s, v)    =  \mathcal B(r,v-1)  \mathcal B(s-1,u-1)    \sigma_n. $ \\  
\item  If  $s \le v+1$ and   $v <  u$: 
$\quad  \mathcal B(r,u)  \mathcal B(s, v)    =  \mathcal B(r,v)  \mathcal B(s,u-1)    \sigma_n. $ \\   
\item  If  $r \le u+1<s $: 
$ \quad  \mathcal B(r,u)  \mathcal B(s, v)    =  \mathcal B(s+1,u+1)  \mathcal B(r+1,v)    \sigma_1. $ \\  
\item  If  $r < s \le u+1$: 
$ \quad  \mathcal B(r,u)  \mathcal B(s, v)   =  \mathcal B(s,u)  \mathcal B(r+1,v)    \sigma_1. $ \\  

\end{enumerate}   
 \end{proposition} 

\begin{proof} These are straightforward computations based on (\ref{productsof2Legobricks}),    relying on the rules: 

$  \lfloor r,s \rfloor \ \sigma_k  \ = \  \sigma_{k+1} \   \lfloor r,s \rfloor$ if 
$ r \le k < s$  ; 
$\lceil  r,s \rceil   \ \sigma_k  \ = \  \sigma_{k-1} \    \lceil  r,s \rceil  $ if 
$ r \ge k > s$. 
 
\begin{equation}\label{productsof2Legobricks} 
\begin{aligned}
   \lceil  a,1 \rceil   \lfloor b,n \rfloor &=   \lfloor b-1,n \rfloor    \lceil  a-1,1 \rceil      &\text{ if }    1 < b \le a+1 \le n+1 ;  
\\
   \lceil  a,1 \rceil   \lfloor b,n \rfloor &=   \lfloor b,n \rfloor    \lceil  a,1 \rceil      &\text{ if }   n+1 \ge   b > a+1 ;  
\\
   \lceil  a,1 \rceil   \lfloor 1,n \rfloor &=           \lfloor  a+1,n \rfloor     &\text{ if } 0 \le a \le n ;
\\
   \lfloor a,n \rfloor     \lfloor b,n \rfloor   &=     \lfloor b,n \rfloor   \lfloor a-1,n-1 \rfloor&\text{ if } n+1 \ge  a>b\ge 1  ;  
\\  
 \lfloor a,n \rfloor     \lfloor b,n \rfloor   &=     \lfloor b+1,n \rfloor   \lfloor a,n-1 \rfloor&\text{ if } 1\le  a \le b \le n  ;  
\\
  \lceil  a,1 \rceil   \lceil  b,1 \rceil   &=  \lceil  b,1 \rceil  \lceil  a+1, 2 \rceil  &\text{ if }  1 \le a < b; &
\\
  \lceil  a,1 \rceil   \lceil  b,1 \rceil   &=  \lceil  b-1,1 \rceil  \lceil  a, 2 \rceil    &\text{ if }    a \ge b  .   
\end{aligned}
\end{equation}  
We remark that  equalities (1) to (6)  involve expressions of the same length. They are actually all reduced 
(Lemma \ref{casem2reduced}).  
\end{proof}

\begin{corollary}\label{moreconditions} 
Let $w$ be in $W(\tilde A_{n})$ with $L(w) =m \ge 1$. Among the affine length reduced expressions of $w$: 
$$
w =  u_1 a_{n+1} u_2 a_{n+1} \dots u_m a_{n+1} u_{m+1}  
\   \text{ with }  u_i \in W(A_{n})  \text{ for } 1\le i \le m+1  
$$ 
we fix one with leftmost  occurrences of $a_{n+1}$.  
We have the following, where in (2) and (3) we assume $2 \le s \le m$.  
\begin{enumerate}
\item  For $1\le s \le m$, there exist integers $j_s$, $i_s$ such that $u_s= h(j_s, i_s)$. They satisfy  $ 1\le  j_1 \le n+1$, $ 0\le i_1 \le n-1$, and,  for $ 2\le  s \le m $: 

\centerline{ either $i_s=0$ and $j_s=1$, or $ 1\le  i_s \le n-1$ and
$ 1\le  j_s \le n$.}

\item   If $j_{s-1} > i_{s-1}+1 $, then   $j_s < j_{s-1}   $  and $i_{s} \ge i_{s-1} $; if 
$j_{s} > i_{s}+1 $ then $i_{s} > i_{s-1} $.  

\item    If   
$j_{s-1}\le  i_{s-1}+1 $, then   $j_s \le  j_{s-1}   $  and  $i_{s} \ge  i_{s-1} $ (so $j_{s} \le i_{s}+1 $ also).

\end{enumerate}  
 \end{corollary}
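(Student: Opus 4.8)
The plan is to prove Corollary~\ref{moreconditions} by taking the affine length reduced expression of $w$ with leftmost occurrences of $a_{n+1}$ and showing that it must be in the normalized form described, with the stated inequalities, using the exchange formulas of Proposition~\ref{exchangeformulas} as the only obstruction to pushing letters further left. First I would establish part~(1): starting from Lemma~\ref{lemmafullA} (or Lemma~\ref{Rofw} when $m=1$) we already know $w$ has a reduced writing $h(j_1,i_1)a_{n+1}\cdots h(j_m,i_m)a_{n+1}v_{m+1}$ with the blockwise constraints on $(j_s,i_s)$; what remains is to argue that the particular expression we fixed — affine length reduced, with $a_{n+1}$'s as far left as possible — coincides with one of this shape, i.e.\ that $u_s=h(j_s,i_s)$ for genuine integers $j_s,i_s$. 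This is where I would invoke the extremality part of Lemma~\ref{lemmafullA}: each $u_s$ for $2\le s\le m$ is extremal, and among extremal elements the decomposition $h(r,i)x$ with $x\in P$ (Lemma~\ref{extremal1}) forces $x$-part trivial once we have pushed everything possible to the right, because any nontrivial $x\in P$ commutes with $a_{n+1}$ and could be absorbed into the next block, contradicting leftmost-ness — more precisely, if $u_s=h(j_s,i_s)x_s$ with $x_s\ne1$ then $x_sa_{n+1}=a_{n+1}x_s$ and then $x_su_{s+1}a_{n+1}\cdots$ can be re-decomposed via Lemma~\ref{extremal1}, and one checks this does not increase the number of $a_{n+1}$'s but does move one leftward, a contradiction. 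A small separate check handles $u_1$ (no extremality, but Lemma~\ref{extremal1} still gives $u_1=h(j_1,i_1)x_1$, and $x_1$ is absorbed the same way) and $u_{m+1}$, which is just left as a general element of $W(A_n)$ renamed $v_{m+1}$.

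For parts~(2) and~(3) I would work locally on two consecutive blocks $h(j_{s-1},i_{s-1})a_{n+1}h(j_s,i_s)a_{n+1}$ and argue by contradiction: if one of the stated inequalities fails, then exactly one of the six identities in Proposition~\ref{exchangeformulas} applies to this pair, and its right-hand side has the shape $h(\,\cdot\,,\,\cdot\,)a_{n+1}h(\,\cdot\,,\,\cdot\,)a_{n+1}\sigma_k$ with $k\in\{1,n\}$ — that is, the same length, the same (minimal) number of $a_{n+1}$'s, but with a $\sigma_k$ now appearing \emph{to the right} of both $a_{n+1}$'s, which can be slid into $h(j_s,i_s)$'s successor block (or into $v_{m+1}$). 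The point is that such a rewrite leaves the total count of $a_{n+1}$'s unchanged while strictly decreasing the combined position index of the $a_{n+1}$'s in the word (it pulls the second $a_{n+1}$ leftward), contradicting our choice of the expression with leftmost $a_{n+1}$'s. So I would go case by case through the failure of each inequality: the hypotheses of the six formulas in Proposition~\ref{exchangeformulas} partition (modulo the blockwise constraints from part~(1)) precisely the configurations $(j_{s-1},i_{s-1},j_s,i_s)$ that are \emph{not} in the claimed canonical range, and each such configuration is killed by the corresponding formula. Concretely: formulas~(1) and~(6) rule out $j_s\ge j_{s-1}$ when $j_{s-1}>i_{s-1}+1$ (formula~(5) and the $r\le u+1$ cases handle the complementary regime for part~(3)), while formulas~(2),~(3),~(4) rule out $i_s<i_{s-1}$, and the extra clauses "$j_s<j_{s-1}$ and $i_s>i_{s-1}$ when $j_s>i_s+1$" come from examining which of these formulas still applies after the first reduction.

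The main obstacle I expect is the bookkeeping in the second half: one has to verify that the hypotheses "$r>u+1$ and $s\ge r$", "$s>u+1\ge v+1$", "$v+1<s\le u+1$", "$s\le v+1$ and $v<u$", "$r\le u+1<s$", "$r<s\le u+1$" of Proposition~\ref{exchangeformulas}(1)--(6), taken together with the part~(1) constraints ($0\le i\le n-1$, and for non-initial blocks $1\le i$ with $1\le j\le n$, or $i=0,j=1$), really do exhaust every configuration outside the canonical cone $\{j_s\le j_{s-1},\ i_s\ge i_{s-1},\ \text{with strictness as stated}\}$, so that whenever an inequality fails we genuinely have an applicable formula. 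I would organize this as a single case split on whether $j_{s-1}>i_{s-1}+1$ or $j_{s-1}\le i_{s-1}+1$ (the dichotomy separating part~(2) from part~(3)), and within each branch on the position of $j_s$ relative to $i_{s-1}+1$ and $j_{s-1}$, checking in each leaf that either we are already in the asserted range or one specific formula from Proposition~\ref{exchangeformulas} produces the forbidden leftward move. The reducedness remark at the end of Proposition~\ref{exchangeformulas} (to be justified by Lemma~\ref{casem2reduced}) guarantees the rewrite does not secretly shorten the word, so the contradiction is purely with leftmost-ness and not with minimality of length.
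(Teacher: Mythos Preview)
Your proposal is correct and follows essentially the same route as the paper: part~(1) is the unpacking of Lemmas~\ref{extremal1} and~\ref{lemmafullA} via the ``$P$ commutes with $a_{n+1}$'' observation you spell out, and parts~(2)--(3) are proved exactly by the local two-block contradiction using Proposition~\ref{exchangeformulas}, organized along the dichotomy $j_{s-1}>i_{s-1}+1$ versus $j_{s-1}\le i_{s-1}+1$. The only divergence is in the precise formula-to-case assignment (e.g.\ for $j_{s-1}>i_{s-1}+1$ and $j_s\ge j_{s-1}$ the paper invokes formula~(1) alone; formulas~(5),(6) handle $j_s>j_{s-1}$ only in the regime $j_{s-1}\le i_{s-1}+1$, and~(3),(4) handle $i_s<i_{s-1}$ there), but you flag this bookkeeping yourself and it is straightforward to get right in execution.
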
 

\begin{proof}  
 All numbered references below refer to Proposition \ref{exchangeformulas}, used to produce  contradictions  to the assumption  that  occurrences of $a_{n+1}$ are leftmost. 

\begin{enumerate}   
\item    follows directly from Lemma \ref{extremal1}  and Lemma \ref{lemmafullA}.

\item   We assume $j_{s-1} > i_{s-1}+1 $. If $j_{s-1}=n+1$ (so $s-1=1$), then $j_s< j_{s-1} $.   If
$j_{s-1} \le n  $ and  $j_s \ge j_{s-1}  $, then (1) gives a contradiction since   the two $a_{n+1}  $ have moved left. Hence $j_s < j_{s-1}   $. 

If also $j_{s} > i_{s}+1 $, then $i_s $ cannot be $ 0$ (since $h(j_s, i_s)$ is extremal), so if $i_{s-1}=0$ we have indeed  $i_s >   i_{s-1}   $.  Now   if   $i_{s-1}>0$ and 
$i_s \le  i_{s-1}   $,  (2) gives a contradiction,  whatever the value of   $j_{s-1}$.

We turn to   $j_{s} \le  i_{s}+1 $.  
If $i_{s-1}=0$ we do have   $i_s \ge i_{s-1}$. If   $i_{s-1}>0$ and   $i_s<  i_{s-1}$, (4) gives a contradiction, 
hence $i_s \ge i_{s-1}$.

\item   We now assume $j_{s-1} \le  i_{s-1}+1 $. If  $j_s > j_{s-1}   $, (5) or (6) give a contradiction.  
 We conclude that $j_s \le j_{s-1}   $.  Now if  $i_{s} <  i_{s-1} $ we are either in case (3) or in case (4), 
and both give a contradiction, so   $i_{s} \ge  i_{s-1} $. 
\end{enumerate} 
\end{proof}

\begin{definition}\label{pairwise}
 Let $m\ge 1$.  A family of integers $(j_s,i_s)_{1\le s \le m}$ is said to satisfy the {\em pairwise inequalities} if the following conditions hold: 

\begin{enumerate}\label{PI}
\item   $ 1\le  j_1 \le n+1$ and  $ 0\le i_1 \le n-1$; 
 
\item   for $ 2\le  s \le m $,   either $i_s=0$ and $j_s=1$, or $ 1\le  i_s \le n-1$ and
$ 1\le  j_s \le n$;

\item   for $ 2\le  s \le m $, we have  $j_s \le  j_{s-1}   $  and  $i_{s} \ge  i_{s-1} $;

\item   If $j_{s-1} > i_{s-1}+1 $, then   $j_s < j_{s-1}   $;

\item   If  
$j_{s} > i_{s}+1 $ then $i_{s} > i_{s-1} $.  
  
\end{enumerate}  

\end{definition} 

We observe that with these conditions $j_{s} > i_{s}+1 $  implies $j_{s-1} > i_{s-1}+1 $.

\begin{theorem}\label{AA} 
   Let $m\ge 1$ and let  $(j_s,i_s)_{1\le s \le m}$  be any family of integers satisfying the  pairwise inequalities. 
The expression 
 $$w= \mathcal B(j_1, i_1) \mathcal B(j_2, i_2)   \dots  \mathcal B(j_m, i_m) 
$$ 
 is reduced and  affine length reduced,  and satisfies
$ \  \mathscr{R} (w) =  \{ a_{n+1}  \} . $     

Any   $w$  in $W(\tilde A_{n})$ with $L(w) =m $ can be written uniquely as 
$$w= \mathcal B(j_1, i_1) \mathcal B(j_2, i_2)   \dots  \mathcal B(j_m, i_m)   x
$$
where $(j_s,i_s)_{1\le s \le m}$ satisfies the pairwise inequalities and   $x$  is 
  the canonical reduced expression of an element in  $W(A_n)$. 
Such a form  is reduced:
 	$$
l(w)= l(x) + \sum_{s=1}^m ( n+1+i_s +1-j_s).
$$
We call the expression $ \mathcal B(j_1, i_1) \mathcal B(j_2, i_2)   \dots  \mathcal B(j_m, i_m) $  
 {\em    the   affine block of   $w$.}    

\medskip

Specifically, a  {\em canonical reduced expression} for $w$ is given by: 
\begin{equation}\label{cancan} w= \mathcal B(j_1, i_1) \mathcal B(j_2, i_2)   \dots  \mathcal B(j_m, i_m)    \lfloor k_1, l_1 \rfloor \lfloor k_2, l_2 \rfloor \dots  \lfloor k_t, l_t \rfloor \end{equation}
with $t \ge 0$,  $n \ge l_1 > \dots > l_t  \ge 1$ and 
$ l_h \ge k_h \ge 1$ for $t \ge h \ge 1$.
\end{theorem}

\begin{proof} The existence of such an expression for $w \in W(\tilde A_{n})$  is given by 
Corollary \ref{moreconditions} and Theorem \ref{1_2}. The other assertions require some work, to be done in the next section.  \end{proof}

\begin{corollary}\label{cos}
The set  $\mathscr{B}_n$ of affine blocks  is the set of canonical reduced expressions for the 
minimal length representatives of the right cosets of $W(A_n)$ in  $W(\tilde A_{n})$. 
\end{corollary}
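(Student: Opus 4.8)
The plan is to read the statement off from Theorem~\ref{AA} together with the parabolic characterization~(\ref{parabolic}); essentially no new computation is required. Throughout, the classes in question are the cosets $wW(A_n)$, and a \emph{minimal length representative} of such a class is an element of least length among all products $wx$ with $x\in W(A_n)$. I will show that the assignment $w_a\mapsto w_a W(A_n)$ is a bijection from $\mathscr{B}_n$ onto the set of these classes, and that each affine block is the unique shortest element of its own class, so that $\mathscr{B}_n$ is precisely the set of canonical reduced expressions of the minimal length representatives.

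First I would establish minimality. Let $w_a=h(j_1,i_1)a_{n+1}\cdots h(j_m,i_m)a_{n+1}$ be an affine block. By Theorem~\ref{AA} this word is reduced and $\mathscr{R}(w_a)=\{a_{n+1}\}$. Since $W(A_n)$ is the parabolic subgroup generated by $S_n\setminus\{a_{n+1}\}$ and $a_{n+1}\notin W(A_n)$, the equivalence~(\ref{parabolic}) gives $l(w_a x)=l(w_a)+l(x)$ for every $x\in W(A_n)$. Hence each element $w_a x$ of the class $w_a W(A_n)$ has length $l(w_a)+l(x)\ge l(w_a)$, with equality exactly when $x=1$, so $w_a$ is the unique minimal length representative of its class; the displayed word is then a canonical reduced expression for that representative, again by Theorem~\ref{AA}.

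Next I would show that $w_a\mapsto w_a W(A_n)$ is a bijection onto the full set of classes. For surjectivity, take any class $wW(A_n)$; by Theorem~\ref{AA} one can write $w=w_a x$ with $w_a$ an affine block and $x\in W(A_n)$, whence $wW(A_n)=w_a W(A_n)$ and the class is attained. For injectivity, suppose two affine blocks satisfy $w_a W(A_n)=w_a' W(A_n)$, so that $w_a=w_a' y$ for some $y\in W(A_n)$. The element $w_a$ then admits two decompositions as an affine block times an element of $W(A_n)$, namely $w_a\cdot 1$ and $w_a'\cdot y$; the uniqueness clause of Theorem~\ref{AA} forces $w_a'=w_a$ and $y=1$. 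Thus distinct affine blocks lie in distinct classes, and combined with the minimality above the map is a bijection onto the minimal length representatives.

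I do not expect any genuine obstacle: the corollary merely repackages the existence and uniqueness of the decomposition $w=w_a x$ proved in Theorem~\ref{AA}, read through~(\ref{parabolic}). The only points needing care are the side convention for the coset space, so that the descent condition $\mathscr{R}(w_a)=\{a_{n+1}\}$ is exactly the one relevant to right multiplication by $W(A_n)$, and the treatment of the trivial class $W(A_n)$, which is covered by the degenerate empty block ($m=0$) with representative $1$, in analogy with the case $s=0$ of Theorem~\ref{1_2}.
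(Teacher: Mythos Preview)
Your proposal is correct and follows exactly the approach the paper intends: the corollary is stated without separate proof, as an immediate repackaging of Theorem~\ref{AA} via~(\ref{parabolic}), and your argument spells out precisely that deduction (minimality from $\mathscr{R}(w_a)=\{a_{n+1}\}$, surjectivity and injectivity from existence and uniqueness of the decomposition $w=w_ax$). The only cosmetic point is that the paper defines affine blocks for $m\ge 1$, so the trivial coset $W(A_n)$ is implicitly handled by the convention that its minimal representative is~$1$; your remark about the empty block with $m=0$ is the natural way to absorb this case.
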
 

  We remark that in an affine block, the affine brick on the left (resp. on the right) of a short affine brick of length $l$ has length at most $l-2$ (resp. at least $l+1$), while the lengths of long affine bricks form a non-decreasing sequence from  left to right, in which  two affine bricks with the same length are equal.

\section{Proof of Theorem \ref{AA}}

\subsection{Skeleton of the proof}\label{3.1}

Let    $j_s$, $i_s$, $1\le s \le m$, be 
 any family of integers satisfying the pairwise inequalities in Definition \ref{pairwise}.  It suffices to prove what we call for short the {\it key statement}:  \\   

{\it    The expression  $  \  w= h(j_1, i_1) a_{n+1} h(j_2, i_2) a_{n+1} \dots  h(j_m, i_m)  a_{n+1} \  $  is reduced and  affine length reduced, and satisfies
$ \  \mathscr{R} (w) =  \{ a_{n+1}  \} $. Furthermore it is the unique such expression of $w$ satisfying the conditions in Theorem \ref{AA}. } \\   

By (\ref{parabolic})  our key statement is equivalent to  the following set of six statements, letting
 $$w_m= h(j_1, i_1) a_{n+1} h(j_2, i_2) a_{n+1} \dots  h(j_m, i_m)  :$$  
\begin{enumerate}
\item The expression $w_m a_{n+1} $ is reduced. 
\item The expression $w_m a_{n+1} \sigma_k $ is reduced for $2\le k \le n-1$.
\item The expression $w_m a_{n+1} \sigma_1 $ is reduced. 
\item The expression $w_m a_{n+1} \sigma_n$ is reduced. 
\item  The element expressed by $w_m a_{n+1} $ has affine length $m$. 
\item The expression $w_m a_{n+1} $ is unique with the given conditions. 
\end{enumerate} 

\medskip  

Our main tool is the criterion  given in Bourbaki   \cite[Ch. IV, \S 1.4]{Bourbaki_1981}. Given a Coxeter system $(W,S)$, we attach to any finite sequence 
$\mathbf s = (s_1, \cdots, s_r)$ of elements in $S$,
 the  sequence $t_\mathbf s= (t_\mathbf s(s_1), \cdots, t_\mathbf s(s_r))$ of elements in $W$ defined by: 
$$ 
t_\mathbf s(s_j)= (s_1\cdots s_{j-1}) \ s_j \   (s_1\cdots s_{j-1})^{-1}   \qquad \text{ for } 1 \le j \le r .   
$$
We call  $t_\mathbf s(s_j)$ the {\it reflection attached to $s_j$} (in the expression $\mathbf s$).
We  shorten the notation sometimes  by writing the expression on the left  into brackets and writing $ [  \dots    ]^{-1}$ for its inverse, namely we write:    
$$ 
t_\mathbf s(s_j)= [s_1\cdots s_{j-1}] \ s_j \   [ \dots  ]^{-1}  .   
$$

 We know  from   \cite[Ch. IV, \S 1, Lemma 2]{Bourbaki_1981}   that  the product  $s_1 \cdots s_r$ is a reduced expression (of the element  $s_1 \cdots s_r$ in $W$)   if and only if   all terms in the sequence  $t_\mathbf s$  are distinct. We will  use this in  the following form: 

\begin{lemma}\label{Bourbaki}
  Let $\mathbf s = (s_1, \cdots, s_r)$ be a sequence of elements in $S$. Assume  that   $s_1 \cdots s_{r-1}$ is a reduced expression. The expression  $s_1 \cdots s_{r}$ is not reduced   if and only if 
there exists $j$,  
$1\le j \le r-1$, such that   $t_\mathbf s(s_j)=t_\mathbf s(s_r)$. Such an integer $j$, if it exists,  is  unique.
\end{lemma}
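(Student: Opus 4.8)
The plan is to deduce the statement directly from the criterion of Bourbaki recalled just above, namely that for a finite sequence $\mathbf s=(s_1,\dots,s_r)$ of elements of $S$ the product $s_1\cdots s_r$ is a reduced expression if and only if the reflections $t_{\mathbf s}(s_1),\dots,t_{\mathbf s}(s_r)$ are pairwise distinct \cite[Ch.\ IV, \S 1, Lemma 2]{Bourbaki_1981}. The single observation that drives the argument is that for $k\le r-1$ the reflection $t_{\mathbf s}(s_k)=(s_1\cdots s_{k-1})\,s_k\,(s_1\cdots s_{k-1})^{-1}$ depends only on the initial segment $(s_1,\dots,s_k)$ of $\mathbf s$; hence it coincides with the reflection attached to $s_k$ in the truncated sequence $\mathbf s'=(s_1,\dots,s_{r-1})$. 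Since $s_1\cdots s_{r-1}$ is assumed reduced, the criterion applied to $\mathbf s'$ already tells us that $t_{\mathbf s}(s_1),\dots,t_{\mathbf s}(s_{r-1})$ are pairwise distinct; I will refer to this fact as the prefix remark.

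With the prefix remark in hand the two implications become routine. For $(\Leftarrow)$: if some index $j$ with $1\le j\le r-1$ satisfies $t_{\mathbf s}(s_j)=t_{\mathbf s}(s_r)$, then the $r$ reflections $t_{\mathbf s}(s_1),\dots,t_{\mathbf s}(s_r)$ are not pairwise distinct, so the criterion gives that $s_1\cdots s_r$ is not reduced. For $(\Rightarrow)$: assuming $s_1\cdots s_r$ is not reduced, the criterion yields indices $p<q\le r$ with $t_{\mathbf s}(s_p)=t_{\mathbf s}(s_q)$; the prefix remark forbids $q\le r-1$, so $q=r$ and $j:=p$ is the index sought, with $1\le j\le r-1$. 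For uniqueness: if $j$ and $j'$ both lie in $\{1,\dots,r-1\}$ and satisfy $t_{\mathbf s}(s_j)=t_{\mathbf s}(s_r)=t_{\mathbf s}(s_{j'})$, then $t_{\mathbf s}(s_j)=t_{\mathbf s}(s_{j'})$ with $j,j'\le r-1$, which forces $j=j'$ by the prefix remark.

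I do not anticipate a genuine obstacle here: the entire content is the quoted Bourbaki criterion together with the elementary fact that passing to a prefix of $\mathbf s$ does not change the reflections attached to the letters that survive. The only step that warrants a moment's attention is $(\Rightarrow)$, where one must check that the forced coincidence of reflections necessarily involves the last letter $s_r$; this is precisely what the hypothesis that $s_1\cdots s_{r-1}$ is reduced guarantees, via the prefix remark. In effect this lemma is just the packaging of the strong exchange property that is convenient for the inductive word manipulations carried out in Section~\ref{3.1} and in the sequel.
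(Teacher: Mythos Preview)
Your proof is correct and matches the paper's approach: the paper does not give an explicit proof of this lemma, treating it as an immediate reformulation of the Bourbaki criterion \cite[Ch.~IV, \S 1, Lemma~2]{Bourbaki_1981} quoted just before the statement. Your argument simply spells out the obvious deduction (the prefix remark plus the three short steps), which is exactly what the paper leaves implicit.
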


We remark from the proof in  \cite{Bourbaki_1981} that having  $t_\mathbf s(s_j) =t_\mathbf s(s_r)$ for some $j  \le r-1$ is equivalent to the equality 
$s_1 \cdots s_{j} \cdots s_{r}= s_1 \cdots \hat s_{j}  \cdots \hat s_{r}$ in $W$, where the hat $\hat s_{j} $ over $s_j$ means that $s_j$ is removed from the expression.   We call  for short  the $j$-th element 
$s_j$ of the sequence  the   {\it {\em hat partner}} of $s_r$.

\medskip 
We illustrate the use of this Lemma with the following statement:

\begin{lemma}\label{wplemma}
Let $w \in  W(\tilde A_n) $ and $p \in P$ such that $wp$ is reduced. Then 
$w p a_{n+1}$ is reduced if and only if $w a_{n+1}$ is reduced. 
\end{lemma}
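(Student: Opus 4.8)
The statement relates reducedness of $wpa_{n+1}$ and $wa_{n+1}$ under the hypothesis that $p\in P$ commutes with $a_{n+1}$ and that $wp$ is reduced. The natural tool is Lemma~\ref{Bourbaki}: since $wp$ is reduced (hence $w$ is reduced), I can test whether appending $a_{n+1}$ destroys reducedness by examining the reflection $t_{\mathbf s}(a_{n+1})$ attached to the final $a_{n+1}$ and asking whether it coincides with the reflection attached to some earlier letter. Fix a reduced word $\mathbf s = (s_1,\dots,s_k)$ for $w$, and a reduced word for $p$, so that their concatenation is a reduced word for $wp$ by hypothesis.

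First I would compute the reflection attached to the trailing $a_{n+1}$ in the word for $wpa_{n+1}$: it is $(wp)\,a_{n+1}\,(wp)^{-1}$. Since $p\in P$ is a word in $\{\sigma_2,\dots,\sigma_{n-1}\}$, each of which commutes with $a_{n+1}$, we get $(wp)a_{n+1}(wp)^{-1} = w(pa_{n+1}p^{-1})w^{-1} = w\,a_{n+1}\,w^{-1}$, which is exactly the reflection attached to the trailing $a_{n+1}$ in the word for $wa_{n+1}$. So the \emph{target} reflection is the same in both cases. The remaining task is to show that this target reflection appears among the reflections attached to the earlier letters of the word for $wpa_{n+1}$ if and only if it appears among those of the word for $wa_{n+1}$.

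The earlier letters of the word for $wpa_{n+1}$ split into the letters of $w$ (where the attached reflections are literally the same as in the word for $wa_{n+1}$, since $w$ is an initial segment of both words) and the letters coming from $p$. So the ``only if'' direction needs: no reflection attached to a letter \emph{inside the $p$-block} can equal $w a_{n+1} w^{-1}$. If the $j$-th letter of the $p$-block is $\sigma_c$ (with $2\le c\le n-1$), then its attached reflection is $w\,q\,\sigma_c\,q^{-1}\,w^{-1}$ where $q$ is the prefix of the $p$-word before $\sigma_c$; this equals $w a_{n+1} w^{-1}$ iff $q\sigma_c q^{-1} = a_{n+1}$, which is impossible since $q\sigma_c q^{-1}$ is a reflection in the parabolic $W(A_n)$ (indeed in the reflections of $P$ after conjugation stays in $W(A_n)$) whereas $a_{n+1}$ is not a reflection of $W(A_n)$ — concretely, $\mathscr{R}(q\sigma_c q^{-1})$-type support arguments, or simply that $W(A_n)$ is a parabolic subgroup and $a_{n+1}\notin W(A_n)$. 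Conversely, any hat-partner of $a_{n+1}$ in the word for $wa_{n+1}$ is a letter of $w$, and the same letter in the word for $wpa_{n+1}$ has the identical attached reflection, so reducedness fails there too. By Lemma~\ref{Bourbaki} this establishes the equivalence.

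\textbf{Main obstacle.} The only genuinely substantive point is ruling out a hat-partner for $a_{n+1}$ \emph{inside} the $p$-block, i.e.\ showing $q\sigma_c q^{-1}\ne a_{n+1}$; this is where the hypothesis $p\in P$ (rather than an arbitrary element of $W(A_n)$) is essential, since it forces all these conjugates to lie in the reflection set of $W(A_n)$, to which $a_{n+1}$ does not belong. Everything else is bookkeeping with the reflection sequence $t_{\mathbf s}$ and the observation that $w$ is a common initial segment of both words.
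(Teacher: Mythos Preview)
Your proof is correct and follows essentially the same approach as the paper: both use Lemma~\ref{Bourbaki} together with the fact that $P$ commutes with $a_{n+1}$, so the reflection attached to the final $a_{n+1}$ is $w\,a_{n+1}\,w^{-1}$ in both words, and no letter from the $p$-block can be its hat partner since such a conjugate lies in $W(A_n)$. The paper's only variation is to reduce by induction on $l(p)$ to the case $p=\sigma_k$, whereas you handle general $p$ directly.
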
  
\begin{proof} The proof by  induction on the length of $p$ is immediate once the length $1$ case is established.  
Assume $w \sigma_k$ is reduced for some $k$, $ 2\le k \le n-1$ and pick a reduced expression $\mathbf w$ for $w$. From Lemma \ref{Bourbaki}, we see that 
$w \sigma_k a_{n+1}$ is not reduced iff there is a simple reflection $s$ in $\mathbf{ w  \sigma_k}$, actually in $\mathbf w$,  such that 
$t_{\mathbf{w \sigma_k a_{n+1}}}(a_{n+1})= t_{\mathbf{w \sigma_k a_{n+1}}}(s)$. Since $\sigma_k$ commutes with $a_{n+1}$ this equality reads exactly $t_{\mathbf{w  a_{n+1}}}(a_{n+1})= t_{\mathbf{w  a_{n+1}}}(s)$ for some $s $ in $\mathbf w$, which is equivalent to $w a_{n+1}$ being not reduced.  
\end{proof}

The proof of  Theorem \ref{AA}, translated into the set of  statements  (1) to (6) above, proceeds by induction on $m$.  The key statement holds 
for $m=1$:  it   is given by 
Lemma \ref{Rofw},  uniqueness follows from Lemma \ref{extremal1}. In subsections \ref{maincomputation} to \ref{ALandunique} we let 
 $m\ge 2$ and, assuming that properties (1) to (6) hold for $w_k$ for any $k \le m-1$, 
 we prove successively properties (1) to  (6) for $w_m$. To do this we  rely on Lemma \ref{Bourbaki}:   
we start with a sequence $\mathbf d = (s_1, \cdots, s_r)$ and a simple reflection $s$ such that 
the expression  $s_1 \cdots s_r$ is reduced and we want to show that   $s_1 \cdots s_r s $ is also 
reduced. We
  transform the reflection  $t_\mathbf d(s) $    attached to $s$ in the expression  $s_1 \cdots s_r s $
 into the reflection attached to some simple reflection  $s'$  in another expression 
$s'_1 \cdots s'_k s' $  which is known to be reduced by induction hypothesis.  

  We recall (\ref{braidsleftright}) and  Corollary \ref{moreconditions}: we need  the pairwise inequalities. In other words: there will be computation,  mostly contained in preliminary lemmas.

\subsection{Rigidity Lemma}
We start with an important Lemma. 
\begin{lemma}[Rigidity Lemma]\label{rigidity}  
Let $w=u \sigma_1 \cdots \sigma_n$ be reduced: $l(w)=l(u)+n$, with $u \in W(\tilde A_n)$. 
Then $ a_{n+1} $ does not belong to $ \mathscr{R} (w)$, in other words  
$u \sigma_1 \cdots \sigma_n a_{n+1} $ is reduced. 
\end{lemma}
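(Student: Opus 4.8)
The plan is to use the Bourbaki criterion (Lemma~\ref{Bourbaki}) directly. Fix a reduced expression $\mathbf u$ for $u$, so that $\mathbf w = \mathbf u\,\sigma_1\cdots\sigma_n$ is a reduced expression for $w$, and consider the extended sequence $\mathbf d = \mathbf w\, a_{n+1}$. By Lemma~\ref{Bourbaki}, the expression $w\,a_{n+1}$ fails to be reduced if and only if there is a (necessarily unique) letter $s$ in $\mathbf w$ whose attached reflection $t_{\mathbf d}(s)$ equals $t_{\mathbf d}(a_{n+1}) = w\,a_{n+1}\,w^{-1}$. So I want to derive a contradiction from the existence of such a hat partner $s$.

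First I would record the reflection attached to the final letter: $t_{\mathbf d}(a_{n+1}) = (u\sigma_1\cdots\sigma_n)\,a_{n+1}\,(u\sigma_1\cdots\sigma_n)^{-1} = u\,(\sigma_1\cdots\sigma_n\, a_{n+1}\,\sigma_n\cdots\sigma_1)\,u^{-1}$. The key observation is that $\sigma_1\cdots\sigma_n\,a_{n+1}\,\sigma_n\cdots\sigma_1$ is a reflection of $W(\tilde A_n)$ which, read in the standard geometric realization, is the reflection in the affine hyperplane obtained by translating the wall of $a_{n+1}$; concretely, in the ``inversion-set'' language it corresponds to the highest root shifted by the action of $\sigma_1\cdots\sigma_n$. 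If $s$ is a hat partner, then $t_{\mathbf d}(s)$ is a reflection whose wall is crossed by the reduced word $\mathbf w$, i.e. an \emph{inversion} of $w$; but $w = u\sigma_1\cdots\sigma_n$ lies in $W(\tilde A_n)$ and the reflection $t_{\mathbf d}(a_{n+1})$ just computed is conjugate to $a_{n+1}$ by an element of the \emph{finite} parabolic $W(A_n)$ only if $u$ itself moves the relevant wall appropriately --- the point being that $a_{n+1}$ is the unique affine generator and the only way $t_{\mathbf d}(a_{n+1})$ can be an inversion of $w$ is if $a_{n+1}$ already occurs in a reduced word for $w$, which it does not since $w\in$ (the subgroup generated by $\mathbf u$ and $\sigma_1,\dots,\sigma_n$, all of whose letters we must examine).

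More carefully, and avoiding geometry, I would argue combinatorially: suppose $s$ is the hat partner of $a_{n+1}$ in $\mathbf d$. By the remark following Lemma~\ref{Bourbaki}, removing $s$ and $a_{n+1}$ from $\mathbf d$ gives an equal element, so $u\sigma_1\cdots\sigma_n a_{n+1} = \mathbf u' \widehat{s}\, \sigma_1\cdots\sigma_n$ where $\mathbf u'\widehat s$ is $\mathbf u$ with $s$ deleted --- in particular $w a_{n+1}$ would then equal an element of length $l(w)-1$ lying in the subgroup generated by $S_n\setminus\{a_{n+1}\} = \{\sigma_1,\dots,\sigma_n\}$ \emph{together with} whatever letters appear in $\mathbf u$. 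To turn this into a contradiction I would split on whether $s$ occurs inside the final block $\sigma_1\cdots\sigma_n$ or inside $\mathbf u$. If $s = \sigma_j$ is one of the final letters, then $\sigma_1\cdots\widehat{\sigma_j}\cdots\sigma_n a_{n+1} = \sigma_j\cdots\sigma_n a_{n+1}\sigma_n\cdots\sigma_j \cdot(\text{something})$ forces a support/parity contradiction exactly as in the proof of Lemma~\ref{Rofw} (the braid $a_{n+1}\sigma_1 a_{n+1}$ and $a_{n+1}\sigma_n a_{n+1}$ are excluded, and $\sigma_1\cdots\sigma_n$ is the longest element of the ``staircase'' so no interior $\sigma_j$ can be cancelled against $a_{n+1}$). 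If instead $s$ lies in $\mathbf u$, then conjugating shows $t_{\mathbf d}(a_{n+1})$ would already be an inversion of $u\sigma_1\cdots\sigma_{n}$ arising from $\mathbf u$ alone, i.e. $u a_{n+1}$ (after the same cancellation bookkeeping) would fail to be reduced --- but this is false, since $a_{n+1}\notin\mathscr{R}(u^{-1})$ would have to be contradicted, and in any case $t_{\mathbf d}(a_{n+1})$ visibly involves $a_{n+1}$ conjugated by the full staircase $\sigma_1\cdots\sigma_n$, whose wall is not crossed by $\mathbf u$ because $\mathbf u\sigma_1\cdots\sigma_n$ is reduced.

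\textbf{Main obstacle.} The delicate point is the second case: cleanly ruling out a hat partner sitting inside $\mathbf u$. The honest way to do this is to make precise the claim that the reflection $t_{\mathbf d}(a_{n+1}) = u\,\bigl(\sigma_1\cdots\sigma_n a_{n+1}\sigma_n\cdots\sigma_1\bigr)\,u^{-1}$ is never an inversion of $w=u\sigma_1\cdots\sigma_n$; I expect the paper handles this by observing that $\sigma_1\cdots\sigma_n a_{n+1}\sigma_n\cdots\sigma_1$, written out, is itself an element whose \emph{only} expression as a conjugate of a simple reflection involves $a_{n+1}$, so that any equality $t_{\mathbf d}(\sigma_{\text{letter of }\mathbf u}) = t_{\mathbf d}(a_{n+1})$ would force $a_{n+1}$ into a reduced word for a prefix of $\mathbf u$, contradicting $u\in$ (wherever $u$ lives) --- or more simply, by passing to the $A_n$ quotient / using the affine-length estimate of Remark~\ref{affinelength}: $L(wa_{n+1})\ge L(a_{n+1})-L(w^{-1}) $ combined with $L(w)$ being controlled. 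I would look for the slickest route that stays within the already-established lemmas, most likely the support/braid argument of Lemma~\ref{Rofw} pushed one step further using the rigidity of the maximal staircase $\sigma_1\cdots\sigma_n$.
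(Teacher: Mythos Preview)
Your proposal has a genuine gap, and it is exactly the one you flag as the main obstacle: the case where the hat partner of $a_{n+1}$ lies inside $\mathbf u$. None of the three routes you sketch actually closes it. The suggestion that such an equality ``would force $a_{n+1}$ into a reduced word for a prefix of $\mathbf u$, contradicting $u\in$ (wherever $u$ lives)'' is not a contradiction: $u$ is an arbitrary element of $W(\tilde A_n)$, so $a_{n+1}$ may perfectly well occur in $\mathbf u$. Likewise, the affine-length inequality of Remark~\ref{affinelength} gives no information here since $L(w)$ and $L(u)$ can be anything, and there is no map to an ``$A_n$ quotient'' to pass to. The claim that the reflection $u(\sigma_1\cdots\sigma_n a_{n+1}\sigma_n\cdots\sigma_1)u^{-1}$ is never an inversion of $w$ is exactly the statement you are trying to prove, so invoking it as an observation is circular. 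In short, once $u$ is allowed to contain $a_{n+1}$, your direct reflection/support arguments lose their grip.

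The paper's proof takes a quite different route: induction on $r=l(u)$. The induction hypothesis (applied to $s_2\cdots s_r$) forces the hat partner of the rightmost $a_{n+1}$ to be the \emph{leftmost} letter $s_1$ of $\mathbf u$, yielding the explicit equality
\[
s_1\cdots s_r\,\sigma_1\cdots\sigma_n \;=\; s_2\cdots s_r\,\sigma_1\cdots\sigma_n\,a_{n+1}
\]
with both sides reduced. The argument then pivots to a case analysis on the \emph{rightmost} letter $s_r$ of $\mathbf u$: if $s_r=\sigma_k$ with $k>2$ one commutes it past $\sigma_1\cdots\sigma_n$ and cancels to drop to $r-1$; the cases $s_r=\sigma_2$ and $s_r=a_{n+1}$ (with $s_{r-1}=\sigma_1$) are handled by applying the cyclic Dynkin automorphism of $\tilde A_n$ and reducing to shorter length; and the residual situation, where every reduced expression of $u$ ends in $\sigma_n a_{n+1}$, is dispatched by a separate rigidity lemma showing such $u$ are left-truncations of $(\sigma_1\cdots\sigma_n a_{n+1})^k$. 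The key ideas you are missing are the induction on $l(u)$ to localize the hat partner, and the use of the Dynkin automorphism to cycle troublesome endings of $\mathbf u$ back into a tractable form.
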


\begin{proof}
We proceed by induction on $r=l(u)$, the case $r=0$ being trivial and the case $r=1$ contained in Lemmas \ref{extremal1}. and \ref{Rofw}.  
 We assume that  $r \ge 2$ and that the assertion holds for any $u$ such that $l(u)\le r-1$. We take $u$ with $l(u)=r$ and pick a reduced expression 
$u=s_1 \cdots s_r$, $s_i \in S_n$. 
Assume for  a contradiction that $w a_{n+1}$ is not reduced. By Lemma~3.1 and the induction hypothesis the hat partner of $a_{n+1}$ on the right is the leftmost term $s_1$, i.e. we have the following equality:
\begin{equation}\label{rigidityxxx}
s_1 \cdots s_{r} \sigma_1 \cdots \sigma_n  =  s_2 \cdots s_{r} \sigma_1 \cdots \sigma_n a_{n+1} 
\quad \text{(both sides reduced)} .  
\end{equation}

We discuss according to  $s_r$, which is not equal to $\sigma_1$.

\begin{itemize}

\item[(a)]   If $s_r=\sigma_k$ for $2 < k \le n$,  equality  (\ref{rigidityxxx}) becomes  
$$s_1 \cdots s_{r-1} \sigma_1 \cdots \sigma_n \sigma_{k-1} =  s_2 \cdots s_{r-1} \sigma_1 \cdots \sigma_n a_{n+1} \sigma_{k-1} $$
which, after canceling $\sigma_{k-1}$, contradicts the induction hypothesis for $r-1$. 

\item[(b)]  If $s_r=a_{n+1}$  and $s_{r-1}=\sigma_1$ we transform (\ref{rigidityxxx}) with the braid on 
$a_{n+1}$  and $\sigma_1$ to get 
$$
s_1 \cdots s_{r-2}a_{n+1} \sigma_1 \cdots \sigma_{n-1} a_{n+1} \sigma_n  =  s_2 \cdots  s_{r-2}a_{n+1} \sigma_1 \cdots \sigma_{n-1} a_{n+1} \sigma_n a_{n+1} 
$$
where both sides are reduced, and after using  the braid on 
$a_{n+1}$  and $\sigma_n$ on the right and canceling the rightmost terms we get: 
$$
s_1 \cdots s_{r-2}a_{n+1} \sigma_1 \cdots \sigma_{n-1}    =  s_2 \cdots  s_{r-2}a_{n+1} \sigma_1 \cdots \sigma_{n-1}   \sigma_n  
$$
where both sides are reduced. Using the Dynkin automorphism $a_{n+1} \rightarrow \sigma_1 \rightarrow \sigma_2 \cdots$, denoted by $s \mapsto s'$,  we obtain 
$$
s'_1 \cdots s'_{r-2}  \sigma_1 \cdots \sigma_{n}    =  s'_2 \cdots  s'_{r-2} \sigma_1 \cdots  \sigma_n  a_{n+1} \quad   \text{(both sides reduced)} 
$$
which  contradicts our induction hypothesis for $r-2$. 

\item[(c)]   If $s_r=\sigma_2$   equality  (\ref{rigidityxxx})  reads  
$$s_1 \cdots s_{r-1} \sigma_2  \sigma_1 \cdots \sigma_n =  s_2 \cdots s_{r-1} \sigma_2  \sigma_1 \cdots \sigma_n a_{n+1}  $$
that  transforms under the inverse of the  Dynkin automorphism above, denoted by $s \mapsto s''$, into: 
$$s''_1 \cdots s''_{r-1} \sigma_1  a_{n+1} \sigma_1 \cdots  \sigma_{n-1} =  s''_2 \cdots s''_{r-1} \sigma_1  a_{n+1} \sigma_1 \cdots \sigma_n   $$
where both sides are reduced.  Now  right multiplication  by $a_{n+1}$ clearly reduces the right-hand-side expression, whereas it cannot reduce the left-hand-side expression by (b): observe in the proof of (b) that the induction hypothesis for $r-1$ actually gives us  case (b) for length $r+1$, which is what we need here. 

\item[(d)]  We are reduced to the case where {\it no reduced expression of $u$   falls into cases } (a), (b) or (c). 
Then any reduced expression of $u$ ends (on the right) with $\sigma_n a_{n+1}$ and our claim follows 
from another Lemma: 

\begin{lemma}\label{THErigidelement}
Let $u $ be an element of $W(\tilde A_n)$ of length $r \ge 2$ such that all reduced expressions of $u$ 
end with  $\sigma_n a_{n+1}$ (on the right). Then $u$ is rigid (has a unique reduced expression) and is a left truncation of  
\begin{equation}\label{rigid}
(\sigma_1 \cdots \sigma_n a_{n+1}) ^k    \qquad (k \ge 1) ,  
\end{equation}
which is a rigid hence reduced    expression.  
\end{lemma}

\begin{proof} The rigidity of (\ref{rigid}) is clear and well-known. 
We show by induction that for $2 \le t \le r$, all reduced expressions of $u$ end on the right  with the rightmost $t$ terms in 
$(\sigma_1 \cdots \sigma_n a_{n+1}) ^k $. This holds for $t=2$, we assume it holds up to $t-1$ and prove it for $t\le 3$. Write a reduced expression for $u$ as 
$s_r \cdots s_1$. By induction $s_1$ to $s_{t-1}$ are uniquely determined. There is actually no choice for $s_t$: it cannot commute with $s_{t-1}$ (otherwise we would get another expression with  different rightmost $t-1$ terms), which itself does not commute with  $s_{t-2}$, and  
$s_t s_{t-1}s_{t-2}$ cannot be a braid for the same reason. Hence $s_t$ is the unique neighbour of $s_{t-1}$ in the Dynkin diagram of $\tilde A_n$ different from $s_{t-2}$. 
\end{proof}
 
So in case (d), $w a_{n+1}$ is a left-truncation of some element of form (\ref{rigid}), hence reduced.
\end{itemize}
\end{proof}

\begin{remark} The two lemmas above clearly hold when replacing 
$ \sigma_1 \cdots \sigma_n$ by $ \sigma_n \cdots \sigma_1$, using the Dynkin automorphism of $A_n$.  
\end{remark}

\subsection{A few more lemmas}  We proceed with more lemmas needed in the proof.   

\begin{lemma}\label{almostrigid}
The expression $ D= a_{n+1}  \sigma_{1} \cdots \sigma_n   \cdots \sigma_1 a_{n+1}$  is reduced  and    affine length  reduced.  
\end{lemma}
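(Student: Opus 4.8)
The plan is to prove the two assertions separately. Write $g:=h(1,n-1)=\lfloor 1,n\rfloor\lceil n-1,1\rceil=\sigma_1\sigma_2\cdots\sigma_n\sigma_{n-1}\cdots\sigma_1$, so that $D=a_{n+1}\,g\,a_{n+1}$. I will use throughout that $g$ is a palindrome, so $g^2=1$ and hence $D^2=1$, and that $g=(\sigma_1\cdots\sigma_{n-1})(\sigma_n\sigma_{n-1}\cdots\sigma_1)$, so a reduced word for $g$ ends on the right with $\sigma_n\sigma_{n-1}\cdots\sigma_1$.

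For reducedness, I would first note that $w:=a_{n+1}g$ is reduced of length $2n$, since $g\in W(A_n)$ forces $a_{n+1}\notin\mathscr{L}(g)$. Putting $u:=a_{n+1}\sigma_1\cdots\sigma_{n-1}$, which is reduced of length $n$ because $\sigma_1\cdots\sigma_{n-1}\in W(A_n)$, we get $w=u\,(\sigma_n\sigma_{n-1}\cdots\sigma_1)$ with $l(w)=2n=l(u)+n$, so this factorisation is reduced. The $\sigma_n\cdots\sigma_1$-version of the Rigidity Lemma~\ref{rigidity} supplied by the Remark that follows it then gives $a_{n+1}\notin\mathscr{R}(w)$, i.e.\ $D=w\,a_{n+1}$ is reduced, with $l(D)=2n+1$. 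I will also need one further reducedness fact, obtained in the same spirit: $g\,a_{n+1}\,g=(g\,a_{n+1})\,g$ is reduced of length $4n-1$, because $g\,a_{n+1}$ is reduced with $\mathscr{R}(g\,a_{n+1})=\{a_{n+1}\}$ by Lemma~\ref{Rofw}, so (\ref{parabolic}) applies.

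For ``affine length reduced'' I must show $L(D)=2$; as the displayed word has two letters $a_{n+1}$ we have $L(D)\le 2$, so only $L(D)\ge 2$ is at stake. That $L(D)\ne 0$ is immediate: $D$ is reduced and its expression ends in $a_{n+1}$, so $a_{n+1}\in\mathscr{R}(D)$ and $D\notin W(A_n)$. Suppose now, for a contradiction, that $L(D)=1$. By Lemma~\ref{Rofw} we may write $D=h(r,i)\,a_{n+1}\,x$ uniquely with $x\in W(A_n)$, and this expression is reduced, so $l(x)=l(D)-l(h(r,i))-1=n+r-i-1$. On the other hand $a_{n+1}D=g\,a_{n+1}$ (using $a_{n+1}^2=1$), hence $g\,a_{n+1}\,x^{-1}=a_{n+1}\,h(r,i)\,a_{n+1}$; since $\mathscr{R}(g\,a_{n+1})=\{a_{n+1}\}$, (\ref{parabolic}) gives the left-hand side length $2n+l(x)$, while the right-hand side has length at most $l(h(r,i))+2=n-r+i+3$. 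Therefore $n+r-i\le 2$, and as $n\ge 2$, $r\ge 1$, $i\le n-1$ force $n+r-i\ge 2$, all these inequalities are equalities: $r=1$, $i=n-1$, $l(x)=1$, so $h(r,i)=g$ and $D=g\,a_{n+1}\,\sigma_k$ for some $k$. Comparing this with $D=a_{n+1}\,g\,a_{n+1}$ and left-multiplying by $(g\,a_{n+1})^{-1}=a_{n+1}g$ yields $\sigma_k=a_{n+1}(g\,a_{n+1}\,g)a_{n+1}$, whose length is at least $l(g\,a_{n+1}\,g)-2=4n-3\ge 5$, contradicting $l(\sigma_k)=1$. Hence $L(D)\ge 2$, i.e.\ $D$ is affine length reduced.

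The only step needing real care is the elimination of the case $L(D)=1$: the two length estimates must be set up so as to pinch $n+r-i$ to exactly $2$, and then the rigid form $D=g\,a_{n+1}\,\sigma_k$ has to be fed back into the explicit expression $D=a_{n+1}\,g\,a_{n+1}$ to extract the length contradiction. Everything else is a routine application of the Rigidity Lemma~\ref{rigidity} and of the parabolic criterion~(\ref{parabolic}).
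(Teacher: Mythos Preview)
Your proof is correct, and it differs from the paper's in both halves.

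For reducedness, the paper argues directly with the hat-partner criterion (Lemma~\ref{Bourbaki}): since $a_{n+1}g$ and $ga_{n+1}$ are both reduced, the only possible hat partner for the rightmost $a_{n+1}$ in $D$ is the leftmost $a_{n+1}$, and that would force $g\in P$, contradicting uniqueness in Lemma~\ref{Rofw}. You instead factor $a_{n+1}g$ as $u\cdot(\sigma_n\cdots\sigma_1)$ and invoke the Rigidity Lemma~\ref{rigidity} in its mirrored form. Both are short; yours exploits a stronger lemma already in hand, while the paper's stays within the Bourbaki framework used throughout \S3.

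For the affine-length part, the arguments really diverge. The paper writes $D=h(r,i)a_{n+1}x$ and shows $x\notin P$, so $\sigma_1$ or $\sigma_n$ lies in $\mathscr{R}(x)\subseteq\mathscr{R}(D)$; then, using the symmetry $D=a_{n+1}\sigma_n\cdots\sigma_1\cdots\sigma_n a_{n+1}$, it suffices to assume $\sigma_1\in\mathscr{R}(D)$, locate the hat partner of that $\sigma_1$ at the leftmost $a_{n+1}$, and derive $a_{n+1}\lfloor1,n\rfloor=\lfloor1,n\rfloor a_{n+1}$, which contradicts Lemma~\ref{Rofw}. Your approach avoids $\mathscr{R}(D)$ altogether: the identity $ga_{n+1}x^{-1}=a_{n+1}h(r,i)a_{n+1}$ plus~(\ref{parabolic}) pinches $n+r-i$ to $2$, forcing $h(r,i)=g$ and $l(x)=1$, after which the length of $a_{n+1}(ga_{n+1}g)a_{n+1}\ge4n-3$ kills $\sigma_k$. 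This length-counting route is more elementary and entirely self-contained; the paper's route is more in keeping with the exchange-condition machinery it relies on elsewhere, and it does not need the auxiliary computation $l(ga_{n+1}g)=4n-1$.
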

 \begin{proof} 
The expressions $a_{n+1}  \sigma_{1} \cdots \sigma_n   \cdots \sigma_1 $ and $  \sigma_{1} \cdots \sigma_n   \cdots \sigma_1 a_{n+1}$ are reduced.  If the given expression   was not reduced, the hat partner of the $a_{n+1}$ on the right could only be the $a_{n+1}$ on the left,    contradicting uniqueness in Lemma \ref{Rofw}. 
 Assuming the affine length is $1$, we get 
$a_{n+1}  \sigma_{1} \cdots \sigma_n   \cdots \sigma_1 a_{n+1} = h(r,i) a_{n+1} x$ with the notation in 
Lemma \ref{Rofw}.    Since $x\in P $ is clearly impossible ($a_{n+1} $ cancels out), either $\sigma_1$ or $\sigma_n$ belongs to $ \mathscr{R}(x)$ hence to $ \mathscr{R}(D)$. 
Since $ D$ also equals $ a_{n+1}  \sigma_{n} \cdots \sigma_1   \cdots \sigma_n a_{n+1}$, it is enough to deal with $\sigma_1$, so we assume  $D\sigma_1$ is not reduced. Then  the $\sigma_1$ on the right  has a hat partner $s$ in $D$. This $s$ must be  the $a_{n+1}$ on the left (for otherwise the expression 
$\sigma_{1} \cdots \sigma_n   \cdots \sigma_1 a_{n+1} \sigma_1$  would not be reduced, contrary to 
Lemma \ref{Rofw}).  Transforming the resulting equality with a braid and cancellations we obtain
  $  a_{n+1}  \sigma_{1} \cdots \sigma_n = \sigma_1   \cdots \sigma_n a_{n+1}$, contradicting Lemma \ref{Rofw} again. 
\end{proof}  

\begin{lemma}\label{casem2reduced}
 We consider an expression of the following form: 
$$
h(j_1,i_1) a_{n+1} h(j,i)a_{n+1}, \qquad  0 \le  i_1,i  \le  n-1, \   1\le j_1,j  \le n+1,  
$$
with   $h(j,i)\ne 1$. This expression is reduced except in the  four ``deficient''  cases listed below together with the hat partner of  the rightmost $a_{n+1}$:   
\begin{enumerate}
\item  $h(j,i)=  \lceil  i,1 \rceil $ and $i_1 \ge i \ge 1$, 

the hat partner is the $\sigma_i$ in 
$h(j_1,i_1) =  \lfloor j_1, n  \rfloor \sigma_{i_1} \cdots \sigma_i \cdots \sigma_1$; 
\item $h(j,i)=  \lfloor j, n  \rfloor$ and $1<j \le n $, $j_1\le j$, $i_1 < j-1$, 

the hat partner is the $\sigma_{j}$ in 
$h(j_1,i_1) =   \sigma_{j_1} \cdots \sigma_{j} \cdots \sigma_n  \lceil  i_1,1 \rceil $; 
\item $h(j,i)=  \lfloor j, n  \rfloor$ and $2<j \le n $, $j_1<j$, $i_1 \ge j-1$, 

the hat partner is the $\sigma_{j-1}$ in 
$h(j_1,i_1) =   \sigma_{j_1} \cdots \sigma_{j-1} \cdots \sigma_n  \lceil  i_1,1 \rceil $; 
\item $h(j,i)=  \lfloor 2, n  \rfloor$ and  $j_1=1$, $i_1 =1$, 

 the hat partner is the leftmost  $\sigma_{1}$ in 
$h(j_1,i_1) =   \sigma_{1} \cdots  \sigma_n \sigma_1$.  
\end{enumerate}
In particular, if $h(j,i)$ is extremal, the expression is reduced. 
\end{lemma}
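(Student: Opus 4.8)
The plan is to run Lemma~\ref{Bourbaki} on the word $w'=Xa_{n+1}Ya_{n+1}$, writing $X=h(j_1,i_1)$ and $Y=h(j,i)$. Since $Xa_{n+1}Y$ is reduced by Lemma~\ref{Rofw}, $w'$ fails to be reduced exactly when the rightmost $a_{n+1}$ has a (unique) hat partner inside a reduced word for $Xa_{n+1}Y$, so I first \emph{locate} that hat partner. It cannot be the middle $a_{n+1}$: that would give $a_{n+1}Ya_{n+1}=Y$, i.e.\ $Y\in P$, whereas $Y\neq 1$ has $\sigma_1$ or $\sigma_n$ in its support. It cannot be a letter of $Y$: cancelling the common prefix $Xa_{n+1}$ and the two trailing $a_{n+1}$'s would force $D^{-1}\sigma_kD=a_{n+1}$ for the suffix $D\in W(A_n)$ of $Y$ after that letter, impossible since $a_{n+1}\notin W(A_n)$. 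Hence the hat partner, if any, is a letter $\sigma_k$ of $X$; writing $X=A\,\sigma_k\,B$ with $B$ the suffix after that occurrence, the hat-partner equation collapses (after cancelling $A$ and $a_{n+1}$) to $B^{-1}\sigma_kB=R$, where $R:=a_{n+1}\,Y\,a_{n+1}\,Y^{-1}\,a_{n+1}=(a_{n+1}Y)\,a_{n+1}\,(a_{n+1}Y)^{-1}$. Since the reflections $B^{-1}\sigma_kB$, ranging over the letters of the reduced word $X$, are precisely the right inversions of $X$, this yields the clean reformulation: \emph{$w'$ is reduced iff $R$ is not a right inversion of $X$}; in particular $w'$ is reduced whenever $R\notin W(A_n)$, and when $R\in W(A_n)$ the hat partner is the unique letter of $X$ realizing $R$.

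Next I would \emph{compute $R$} by a short braid manipulation using only that $a_{n+1}$ commutes with $\sigma_2,\dots,\sigma_{n-1}$ and braids with each of $\sigma_1,\sigma_n$. If $Y=\lceil i,1\rceil$ with $i\ge 1$, one finds $R=\lceil i,2\rceil\,\sigma_1\,\lceil i,2\rceil^{-1}$, the transposition $(1,i+1)\in W(A_n)$; if $Y=\lfloor j,n\rfloor$ with $j\ge 2$, one finds symmetrically $R=\lfloor j,n-1\rfloor\,\sigma_n\,\lfloor j,n-1\rfloor^{-1}=(j,n+1)\in W(A_n)$. If $Y$ is extremal, both $\sigma_1$ and $\sigma_n$ are present and the flanking $a_{n+1}$'s cannot all be absorbed: here I would argue that $a_{n+1}Ya_{n+1}$ is affine-length reduced, of affine length $2$. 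This is exactly the type of statement proved in Lemma~\ref{almostrigid} (cf.\ the Rigidity Lemma~\ref{rigidity}), and its proof goes through for any extremal $Y$ — the only candidate hat partner for the second $a_{n+1}$ being the first, contradicting uniqueness in Lemma~\ref{Rofw}, and a supports argument ruling out affine length $1$, the presence of both $\sigma_1$ and $\sigma_n$ being precisely what blocks the reduction. Then, using $L(a_{n+1}Ya_{n+1})=2$, $L(Y^{-1}a_{n+1})=1$ and $L(vw)\ge|L(v)-L(w)|$ from Remark~\ref{affinelength}, one gets $L(R)\ge 1$, so $R\notin W(A_n)$. This simultaneously gives the last sentence of the lemma and reducedness in every non-deficient sub-case of the two non-extremal shapes.

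It remains to read off the deficient cases and name the hat partners. For this I would realize $X=h(j_1,i_1)$ as a permutation $\pi$ of $\{1,\dots,n+1\}$: the factor $\lceil a,1\rceil$ sends $1\mapsto a+1$ and $k\mapsto k-1$ for $2\le k\le a+1$, while $\lfloor b,n\rfloor$ sends $k\mapsto k+1$ for $b\le k\le n$ and $n+1\mapsto b$. Then $\pi(n+1)=j_1$, and $R$ — equal to $(1,i+1)$, resp.\ $(j,n+1)$ — is a right inversion of $X$ iff $\pi(1)>\pi(i+1)$, resp.\ $\pi(j)>\pi(n+1)$. Evaluating $\pi$ on $1$, $i+1$, $j$ from these two rules converts the criterion into the stated numerical conditions: $i_1\ge i\ge 1$ in the first shape, and the comparisons of $j_1$ against $j$ and of $i_1$ against $j-1$ (with the boundary at $j=2$, $j_1=1$ giving the last item) in the second. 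In each deficient case I would confirm that the displayed letter of $X$ — the $\sigma_i$ sitting inside $\lceil i_1,1\rceil$, resp.\ $\sigma_j$ or $\sigma_{j-1}$ or the leftmost $\sigma_1$ sitting inside $\lfloor j_1,n\rfloor$ — is the hat partner, by computing $B^{-1}\sigma_kB$ for the suffix $B$ after that letter and checking it equals $R$. The one genuinely nontrivial step is the extremal case, where the ``affineness'' input (Lemma~\ref{almostrigid}/Rigidity, or equivalently an affine-length count) is indispensable to rule out a hat partner; everything else is bookkeeping with the basic relations among the $\lfloor\cdot\,\rfloor$, $\lceil\cdot\,\rceil$ bricks together with the above permutation computation.
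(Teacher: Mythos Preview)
Your approach is correct and genuinely different from the paper's. Both proofs begin by pinning the hat partner inside $X=h(j_1,i_1)$ and reducing to a conjugation identity, but then proceed dually. The paper parametrises by the \emph{position} of the hat partner: for each suffix $h(j'_1,i'_1)$ of $X$ it computes $V(j'_1,i'_1)=a_{n+1}\,h(j'_1,i'_1)^{-1}\hat h(j'_1,i'_1)\,a_{n+1}$ (a six-line table), excludes the entries equal to $D$ via Lemma~\ref{almostrigid}, and uses uniqueness in Lemma~\ref{Rofw} to read off which $h(j,i)$ can occur. You parametrise by the \emph{shape} of $Y$: you compute $R=(a_{n+1}Y)a_{n+1}(a_{n+1}Y)^{-1}$ once, and then test whether $R$ is a right inversion of $X$ via the permutation realisation of $W(A_n)$. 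Your route is more conceptual and gives the numerical conditions in one stroke from $\pi(1),\pi(i+1),\pi(j),\pi(n+1)$; the paper's stays purely inside Coxeter combinatorics and avoids any model for $W(A_n)$. The two equations are literally conjugate by $a_{n+1}$: your $R$ equals $a_{n+1}\cdot\big(h(j,i)a_{n+1}h(j,i)^{-1}\big)\cdot a_{n+1}$, and your $B^{-1}\sigma_kB$ equals $a_{n+1}V(j'_1,i'_1)a_{n+1}$.

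One point deserves tightening. For the extremal case you assert that the proof of Lemma~\ref{almostrigid} ``goes through for any extremal $Y$'' to obtain $L(a_{n+1}Ya_{n+1})=2$. The paper's proof of that lemma exploits the Dynkin symmetry of the specific element $D$ (reducing $\sigma_n$ to $\sigma_1$), so the generalisation is not entirely automatic. But you do not actually need this detour: your own permutation framework disposes of the extremal case directly. With $R=t_{\pi(n+1),\,\pi(n+2)}$ for $\pi=a_{n+1}Y$ in the affine permutation model, one has $\pi(n+1)=Y(n+1)$ if $Y(n+1)\ne n+1$ and $\pi(n+2)=Y(1)+(n+1)$ if $Y(1)\ne 1$; extremality of $Y$ is exactly $Y(1)\ne 1$ and $Y(n+1)\ne n+1$, which forces $\pi(n+1)\in\{1,\dots,n\}$ and $\pi(n+2)\ge n+3$, hence $R\notin W(A_n)$. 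Plugging this in makes your argument self-contained and removes the appeal to a generalised Lemma~\ref{almostrigid}.
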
  
\begin{proof} 
From Lemma  \ref{Rofw}   we know that $h(j_1,i_1) a_{n+1} h(j,i) $ is reduced. Assume that  $h(j_1,i_1) a_{n+1} h(j,i)  a_{n+1} $ is not. The hat partner of the rightmost $a_{n+1}$ cannot be the leftmost $a_{n+1}$
because the commutant of   $a_{n+1}$ in $W(A_n)$ is $P$.  
So $h(j_1,i_1)$ is not equal to $1$ and the hat partner is a reflection $s$ in $h(j_1,i_1)$.  
Truncating the elements on the left of $s$ we obtain an equality 
$ h(j'_1,i'_1) a_{n+1}  h(j,i)  a_{n+1}= \hat h(j'_1,i'_1) a_{n+1}  h(j,i)   $ where  $ \hat h(j'_1,i'_1)$   is obtained from $ h(j'_1,i'_1)  $ by removing the leftmost reflection. We rewrite this as: 
$$    a_{n+1} h(j'_1,i'_1)^{-1}   \hat h(j'_1,i'_1) a_{n+1} =  h(j,i)  a_{n+1}  h(j,i)^{-1}  . 
$$
Let $V(j'_1,i'_1)$ be the expression on the left hand side. We compute: 
\begin{equation}\label{Vji}
V(j'_1,i'_1)= \left\{ \begin{aligned}  
& \lceil  i'_1,1 \rceil  a_{n+1}  \lfloor 1, i'_1  \rfloor   &\text{ if } j'_1=n+1 ; \cr 
& \lfloor j'_1, n  \rfloor a_{n+1}  \lceil  n,j'_1 \rceil   &\text { if }    1<j'_1 \le n \text{ and } i'_1<j'_1-1 ;   \cr 
&  D   &\text { if }    1<j'_1 \le n \text{ and } i'_1=j'_1-1 ;   \cr 
& \lfloor j'_1+1, n  \rfloor a_{n+1}  \lceil  n,j'_1+1 \rceil   &\text { if }    1<j'_1 \le n \text{ and } i'_1\ge j'_1 ;   \cr 
&  D     &\text { if }    j'_1=1 \text{ and } i'_1\ne 1 ;   \cr 
& \lfloor 2, n  \rfloor a_{n+1}  \lceil  n,2 \rceil  & \text { if }    j'_1=1  \text{ and } i'_1=1.   
\end{aligned}\right. 
\end{equation}
Our equality implies that  $ V(j'_1,i'_1)$ has affine length $1$, which excludes the cases where it is equal to $D$, by Lemma \ref{almostrigid}. The uniqueness in Lemma \ref{Rofw}   now implies that $h(j,i) $ is equal to one of the following:  $\lceil  i'_1,1 \rceil$, $ \lfloor j'_1, n  \rfloor$, $ \lfloor j'_1+1, n  \rfloor$ or $ \lfloor 2, n  \rfloor$, it remains to plug in  the conditions in (\ref{Vji}). 
\end{proof}

\begin{lemma}\label{SubcaseA1} Let $m \ge 2$,  assume the pairwise inequalities hold and    $ j_m >1 $. The element  $h(j_{m-1}, i_{m-1})  \lfloor j_m,n \rfloor $ is reduced and equal to one of the following reduced elements: 
$$
\begin{aligned} 
&  h(j_m, i_{m-1})  \lfloor j_{m-1}-1 ,n-1 \rfloor  &\text{ if }  j_{m-1} > j_{m} > i_{m-1} +1
\\ &  h(j_m-1, i_{m-1}-1)  \lfloor j_{m-1}-1 ,n-1 \rfloor  &\text{ if }  j_{m-1} > i_{m-1} +1 \ge  j_{m} >1  
\\ &  h(j_m-1, i_{m-1})  \lfloor j_{m-1} ,n-1 \rfloor  &\text{ if }  i_{m-1} +1 \ge  j_{m-1} \ge  j_{m} >1  
\end{aligned}
$$ 
Writing this as 
$ 
h(j_{m-1}, i_{m-1})  \lfloor j_m,n \rfloor  =  h(j'_{m-1}, i'_{m-1})  \lfloor u_{m},n-1 \rfloor 
$ 
with $u_{m} \ge 2$,  
 the sequence $\{(j_1,i_1), \cdots, (j_{m-2}, i_{m-2}), (j'_{m-1}, i'_{m-1}) \}$ 
satisfies  the pairwise inequalities. 
\end{lemma}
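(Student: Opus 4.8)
The plan is to prove the three displayed identities by direct manipulation with the relations in (\ref{productsof2Legobricks}), to deduce that every word in sight is reduced from a letter count together with Lemma \ref{extremal1}, and to verify the pairwise inequalities for the truncated sequence by running through the five conditions of Definition \ref{pairwise}. I first note that, since $j_m>1$ and the pairwise inequalities hold, the three displayed parameter ranges exhaust all possibilities: if $j_{m-1}>i_{m-1}+1$, then condition (4) of Definition \ref{pairwise} forces $j_m<j_{m-1}$, so we are either in the case $j_{m-1}>j_m>i_{m-1}+1$ or in the case $j_{m-1}>i_{m-1}+1\ge j_m>1$; while if $j_{m-1}\le i_{m-1}+1$, then condition (3) gives $j_{m-1}\ge j_m$, i.e.\ the case $i_{m-1}+1\ge j_{m-1}\ge j_m>1$. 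In the last two cases the pairwise inequalities moreover force $i_{m-1}\ge 1$ (from $j_m>1$ and $j_m\le i_{m-1}+1$), so that the symbols $\lceil i_{m-1}-1,1\rceil$ appearing below make sense.

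To obtain the identities, I write $h(j_{m-1},i_{m-1})\lfloor j_m,n\rfloor=\lfloor j_{m-1},n\rfloor\,\lceil i_{m-1},1\rceil\,\lfloor j_m,n\rfloor$ and push the rightmost block to the left. In the first case $\lceil i_{m-1},1\rceil$ commutes with $\lfloor j_m,n\rfloor$ (as $j_m>i_{m-1}+1$), then $\lfloor j_{m-1},n\rfloor\lfloor j_m,n\rfloor=\lfloor j_m,n\rfloor\lfloor j_{m-1}-1,n-1\rfloor$ by the fourth relation of (\ref{productsof2Legobricks}), and a final commutation (valid since $j_{m-1}-1\ge i_{m-1}+2$) yields $h(j_m,i_{m-1})\lfloor j_{m-1}-1,n-1\rfloor$. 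In the other two cases, the first relation of (\ref{productsof2Legobricks}) replaces $\lceil i_{m-1},1\rceil\lfloor j_m,n\rfloor$ by $\lfloor j_m-1,n\rfloor\lceil i_{m-1}-1,1\rceil$, and then the fourth relation gives $\lfloor j_{m-1},n\rfloor\lfloor j_m-1,n\rfloor=\lfloor j_m-1,n\rfloor\lfloor j_{m-1}-1,n-1\rfloor$; in the second case $\lfloor j_{m-1}-1,n-1\rfloor$ commutes with $\lceil i_{m-1}-1,1\rceil$ (as $j_{m-1}-1\ge i_{m-1}+1$), producing $h(j_m-1,i_{m-1}-1)\lfloor j_{m-1}-1,n-1\rfloor$, while in the third case one more application of the first relation (now with top index $n-1$) identifies $\lfloor j_{m-1}-1,n-1\rfloor\lceil i_{m-1}-1,1\rceil$ with $\lceil i_{m-1},1\rceil\lfloor j_{m-1},n-1\rfloor$, producing $h(j_m-1,i_{m-1})\lfloor j_{m-1},n-1\rfloor$. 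Tracking parameters, $u_m=j_{m-1}-1\ge 2$ in the first two cases (with $\lfloor n,n-1\rfloor$ read as the empty word when $j_{m-1}=n+1$) and $u_m=j_{m-1}\ge j_m\ge 2$ in the third, as asserted.

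For reducedness: since $u_m\ge 2$, the word $\lfloor u_m,n-1\rfloor$ lies in $P=\langle\sigma_2,\dots,\sigma_{n-1}\rangle$, so the right-hand side $h(j'_{m-1},i'_{m-1})\lfloor u_m,n-1\rfloor$ is of the shape $h(r,i)\,x$ with $x\in P$ and is reduced by Lemma \ref{extremal1}. Every relation of (\ref{productsof2Legobricks}) used above (and every commutation) has the same number of letters on both sides, so the word $h(j_{m-1},i_{m-1})\lfloor j_m,n\rfloor$ has the same letter count as $h(j'_{m-1},i'_{m-1})\lfloor u_m,n-1\rfloor$, namely the length of the element they both represent; hence it too is a reduced expression.

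It remains to check the pairwise inequalities for $\{(j_1,i_1),\dots,(j_{m-2},i_{m-2}),(j'_{m-1},i'_{m-1})\}$. Conditions (1)--(2) for the unchanged pairs $(j_1,i_1),\dots,(j_{m-2},i_{m-2})$ are inherited; for the new pair one checks, case by case, that $j'_{m-1},i'_{m-1}$ are admissible — the only delicate point being that in the second case $i_{m-1}=1$ forces $j_m=2$ and $(j'_{m-1},i'_{m-1})=(1,0)$, and that for $m=2$ the lone surviving pair must meet condition (1), where $i'_1=0$ is permitted. For conditions (3)--(5) at $s=m-1$ (relevant only for $m\ge 3$): $j'_{m-1}\le j_m\le j_{m-1}\le j_{m-2}$ settles the $j$-parts of (3) and (4); $i'_{m-1}\ge i_{m-2}$ is clear in the first and third cases, and in the second case follows from condition (5) for the original sequence, whose hypothesis $j_{m-1}>i_{m-1}+1$ holds there, giving $i_{m-1}>i_{m-2}$ so that $i'_{m-1}=i_{m-1}-1\ge i_{m-2}$; and the hypothesis of (5) for the new sequence holds only in the first case, where it coincides with that case's defining inequality and the conclusion $i'_{m-1}=i_{m-1}>i_{m-2}$ is once more condition (5) for the original sequence. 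The computations for the three identities are the bulk of the work, but the step demanding the most care is this last verification, where conditions (3) and (5) of Definition \ref{pairwise} must be invoked at exactly the right place.
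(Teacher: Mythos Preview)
Your proof is correct and follows essentially the same route as the paper's. The paper packages the needed manipulations into a separate block of four identities \eqref{productsof3Legobricks} for $\lfloor b,n\rfloor\lceil a,1\rceil\lfloor c,n\rfloor$ and then applies them, whereas you derive each of the three lemma identities directly from \eqref{productsof2Legobricks}; both arguments then appeal to Lemma~\ref{extremal1} together with letter-count preservation for reducedness. Your verification of the pairwise inequalities is more explicit than the paper's ``easy to check'', and correctly isolates where conditions (4) and (5) of Definition~\ref{pairwise} for the original sequence must be invoked.
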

\begin{proof} We note the following formulas, for  $0 \le a  \le  n-1$, $1 \le b \le n+1$, $1 \le c \le n$: 
\begin{equation}\label{productsof3Legobricks} 
\begin{aligned}
     \lfloor b,n \rfloor    \lceil  a,1 \rceil   \lfloor c,n \rfloor  
&=    \lfloor c,n \rfloor     \lceil  a,1 \rceil    \lfloor b-1,n-1 \rfloor  
 &\text{ if }    c  >  a+1,  b >c   ;
 \\ 
&=      \lfloor b+1,n \rfloor   \lceil  a,1 \rceil  \lfloor b,n-1 \rfloor  
 &\text{ if }    c  >  a+1,  b =c   ; \\ 
&=      \lfloor c-1,n \rfloor   \lceil  a-1,1 \rceil   \lfloor b-1,n-1 \rfloor 
 &\text{ if }   1 <  c  \le  a+1  <b     ;\\
&=      \lfloor c-1,n \rfloor     \lceil  a,1 \rceil    \lfloor b,n-1 \rfloor  
 &\text{ if }   1 <  c  \le  b \le    a+1.    
\end{aligned}
\end{equation} 

They imply the equalities in the Lemma, 
 with $a= i_{m-1} \ge 0$,  $c=j_m>1$,   
$b= j_{m-1}\ge c >1$. The pairwise inequalities are easy to check. The expressions obtained are reduced by Lemma \ref{extremal1} and have the same length that the initial expression. 
\end{proof}

\subsection{The expression $w_m a_{n+1} $ is reduced.}\label{maincomputation}
The case $m=2$ has been dealt with in Lemma \ref{casem2reduced} so we let  $m\ge 3$. 
Furthermore the Rigidity Lemma \ref{rigidity}   gives the result  if $i_m=0$, or if $i_m=n-1$, 
or if $j_m=1$, hence we assume $j_m >1$ and $1 \le i_m < n-1$.

Suppose for a contradiction that  $w_m a_{n+1} $ is not reduced and let $s$ be the hat partner 
of the    $a_{n+1} $ on the right   (Lemma \ref{Bourbaki}). By induction hypothesis the expression 
$h(j_2, i_2) a_{n+1} \dots  h(j_m, i_m)  a_{n+1}$  is reduced so $s$  is to be removed from the leftmost part $h(j_1, i_1) a_{n+1} $.  
From  Lemma \ref{Bourbaki} we have  $  t_{w_m a_{n+1}}(a_{n+1}  ) =    t_{w_m a_{n+1}}(s)$,   
with  $t_{w_m a_{n+1}}(s)=t_{h(j_1, i_1) a_{n+1} }(s)$, so: 
$$
\!  [h(j_1, i_1)  a_{n+1} \dots  h(j_{m}, i_{m}) ]  \  a_{n+1} \    [  \dots    ]^{-1}  =t_{h(j_1, i_1) a_{n+1} }(s). 
$$

Recalling our assumptions $j_m >1$ and $1 \le i_m < n-1$,  we  compute 
$$  \begin{aligned}
 \!     X  &=   [h(j_{m-1}, i_{m-1})a_{n+1}h(j_m, i_m) ] \  a_{n+1}  \  [...]^{-1}     \\  
&=   [h(j_{m-1}, i_{m-1})a_{n+1}  \lfloor j_m,n \rfloor   \lceil  i_m,2 \rceil  ] \  \sigma_1 a_{n+1} \sigma_1 \   [...]^{-1}    \\
&= [h(j_{m-1}, i_{m-1})a_{n+1}  \lfloor j_m,n \rfloor  a_{n+1}   \lceil  i_m,2 \rceil  ] \   \sigma_1 \    [...]^{-1} 
\\
&=      [h(j_{m-1}, i_{m-1}) \lfloor j_m,n \rfloor  a_{n+1} \sigma_n  \lceil  i_m,2 \rceil  ] \   \sigma_1 \    [...]^{-1} 
\end{aligned}
$$

 We let $h(j_{m-1}, i_{m-1}) \lfloor j_m,n \rfloor =h(j'_{m-1}, i'_{m-1}) x$,   $x \in P$,  and   
 $$v= h(j_1, i_1)a_{n+1}   \dots  h(j_{m-2}, i_{m-2})  a_{n+1} h(j'_{m-1}, i'_{m-1}) a_{n+1}      $$
With Lemma \ref{SubcaseA1} we know that the expression $v$  satisfies the conditions in the key statement for $m-1$, so it is reduced and for any  reduced expression $y $ of an element in $W(A_n)$, $v y $ is reduced.   Let $y$ be a reduced form of  $x  \sigma_n  \lceil  i_m,2 \rceil $ ($\sigma_1$ is not in the support). The expression  $v y \sigma_1$ is reduced with leftmost terms $h(j_1, i_1)a_{n+1} $ ($m\ge 3$), so with  Lemma \ref{Bourbaki}
$v y \sigma_1  y^{-1}v^{-1}$ cannot be equal to  $t_{v y \sigma_1}(s)=t_{h(j_1, i_1) a_{n+1} }(s)$, a contradiction with 
$   w_m   a_{n+1}  w_m^{-1} = v y \sigma_1  y^{-1}v^{-1}$.   

 \subsection{The expression $w_m a_{n+1} \sigma_k $ is reduced for $2\le k \le n-1$.}\label{2ton-1}

We just proved that  $w_m a_{n+1}  $ is  reduced, so this follows from Lemmas \ref{extremal1} and \ref{wplemma}.

\subsection{The expression $w_m a_{n+1} \sigma_1 $ is reduced. }\label{casesigma1}

Let $m\ge 2$. We have shown  that    $w_m a_{n+1}  $ is a reduced  expression. 
Suppose for a contradiction that  $w_m a_{n+1} \sigma_1 $  is not and let $s$ be the hat partner of $\sigma_1$  (Lemma \ref{Bourbaki}). By induction hypothesis   $s$ belongs to the leftmost part of the expression: $h(j_1, i_1) a_{n+1} $.   We   have  
$$  t_{   w_m a_{n+1}  \sigma_1   }(\sigma_1  ) =    w_m   a_{n+1}\sigma_1  a_{n+1}   w_m^{-1} =    w_m   \sigma_1  a_{n+1}  \sigma_1 w_m^{-1}=
t_{   w_m   \sigma_1  a_{n+1} }( a_{n+1})$$
while $  t_{   w_m a_{n+1}  \sigma_1   }(s)= t_{   w_m   \sigma_1  a_{n+1} }( s) $ since the two expressions have the same 
 leftmost part $h(j_1, i_1) a_{n+1} $. 

 If  $i_m  = 0$ the expression $w_m   \sigma_1$ is obtained from $w_m$ by replacing $h(j_{m}, 0)$ with  $ h(j_{m}, 1)$. It 
satisfies   the conditions in the key statement, so $ w_m   \sigma_1 a_{n+1}$ is reduced and 
$  t_{   w_m   \sigma_1  a_{n+1} }(a_{n+1}  )  $ cannot be equal to $  t_{   w_m   \sigma_1  a_{n+1} }(s )$.

If $i_m\ge 1$,  we have the following reduced expression for $w_m \sigma_1$: 
 $$\mathbf y= h(j_1, i_1) a_{n+1}  \dots  h(j_{m-1}, i_{m-1})   a_{n+1}   \lfloor j_m,n \rfloor      \lceil i_m,2 \rceil . $$  
A contradiction will follow if we prove that $\mathbf y a_{n+1}$ is reduced or,  equivalently by  Lemma \ref{wplemma},  that 
$$ \mathbf z =  h(j_1, i_1) a_{n+1} \dots  h(j_{m-1}, i_{m-1})   a_{n+1}  \lfloor j_m,n \rfloor   a_{n+1}$$ 
is reduced. Lemma \ref{rigidity}  does the work if $j_m=1$. If $j_m>1$, we observe that 
$$
[ h(j_{m-1}, i_{m-1})a_{n+1}  \lfloor j_m,n \rfloor a_{n+1} ] \   a_{n+1} \   [ \dots ]^{-1} =[  h(j_{m-1}, i_{m-1})  \lfloor j_m,n \rfloor ]  \   \sigma_n  \     [ \dots ]^{-1} . 
$$
  By Lemma \ref{SubcaseA1}, the expression $ h(j_{m-1}, i_{m-1})  \lfloor j_m,n \rfloor$ is reduced hence, by induction, so is $ \mathbf x = h(j_1, i_1) a_{n+1} \dots  h(j_{m-1}, i_{m-1})    \lfloor j_m,n \rfloor$.   
 If $m >2$, we obtain $t_\mathbf x (\sigma_n) = t_\mathbf x (s) $, a contradiction.  If $m=2$, 
we see directly that 
$ \mathbf z =  h(j_1, i_1) a_{n+1}    \lfloor j_2,n \rfloor   a_{n+1} $ is reduced using a braid, Lemma \ref{SubcaseA1} and
Lemma \ref{Rofw}.  

\subsection{The expression $w_m a_{n+1} \sigma_n$ is reduced. }

We follow the same track as for $\sigma_1$ and examine the expression $ w_m   \sigma_n  a_{n+1}$. 

If $i_m=n-1$  and $j_m=n$,   the expression   $w_m   \sigma_n$ is obtained from $w_m$ by replacing 
$h(n, n-1)$ with  $ h(n-1,n-1)$ at the $m$-th rank. It 
satisfies the pairwise inequalities, so $ w_m   \sigma_n a_{n+1}$ is reduced.

If $i_m=n-1$ and $j_m\le n-1$, we have 
$$  \lfloor j_m,n \rfloor      \lceil n-1,1\rceil \sigma_n=  \lfloor j_m,n-2\rfloor      \lceil n,1\rceil =   \lceil n,1\rceil \lfloor j_m+1,n-1 \rfloor  $$ 
and the expression $h(j_1, i_1) a_{n+1} \dots  h(j_{m-1}, i_{m-1})   a_{n+1}     \lceil n,1\rceil \lfloor j_m+1,n-1 \rfloor   a_{n+1}   $ is reduced by Lemmas \ref{wplemma} and \ref{rigidity}.

If $i_m < n-1$,  we have $u:= \lfloor j_m,n \rfloor      \lceil i_m,1\rceil \sigma_n=  \lfloor j_m,n-1 \rfloor      \lceil i_m,1\rceil $. If $j_m>i_m+1$, then $u= \lceil i_m,1\rceil  \lfloor j_m,n-1 \rfloor$; if $j_m\le i_m+1$, then 
$u= \lceil i_m+1,1\rceil  \lfloor j_m+1,n-1 \rfloor$ (\ref{productsof2Legobricks}). The piece $\lfloor \dots ,n-1 \rfloor$ belongs to $P$ and can be left out (Lemma \ref{wplemma}). 
 We get the expression 
 $h(j_1, i_1) a_{n+1} \dots  h(j_{m-1}, i_{m-1})   a_{n+1}    \lceil i',1\rceil  a_{n+1}   $ with 
$i'=i_m$ or $i_m+1$ and $ i_{m-1}< i'$. For $m=2$   
Lemma \ref{casem2reduced} ensures that this expression is reduced (since $i_1< i'$) and we are done. 
For $m> 2$ we let 
$$v=   h(j_{m-1}, i_{m-1})   a_{n+1}    \lceil i',1\rceil  a_{n+1} =  h(j_{m-1}, i_{m-1})       \lceil i',1\rceil  a_{n+1} \sigma_1 .$$ 
If $i_{m-1}=0$, or if  $i_{m-1}\ge 1 $ with (\ref{productsof2Legobricks}), we have : 
$$v=  h(j_{m-1}, i')     \lceil i_{m-1}+1,2\rceil  a_{n+1} \sigma_1. $$   
Since $ h(j_{m-1}, i')$ satisfies  the pairwise inequalities, we get a reduced expression hence the contradiction needed.

\subsection{Affine length and uniqueness}\label{ALandunique}

We already know that an element of  affine length $k$ can be written as 
 $$  \   h(j'_1, i'_1) a_{n+1} h(j'_2, i'_2) a_{n+1} \dots  h(j'_k, i'_k)  a_{n+1} x  \  $$
where $x \in P$ and the  family  of integers  $j'_s$, $i'_s$, $1\le s \le k$,    satisfies the pairwise inequalities, and we just proved that 
for $k\le m$ this expression is reduced. 
Assume for a contradiction that either $w_m a_{n+1} $ has affine length less than $m$, or 
there is another expression of this element satisfying the required conditions. 
Either way,  we have  an integer $k\le m$ and a  family  of integers  $j'_s$, $i'_s$, $1\le s \le k$,    satisfying the pairwise inequalities, such that  
$$
\begin{aligned}
w  &= h(j_1, i_1) a_{n+1} h(j_2, i_2) a_{n+1} \dots  h(j_m, i_m)  a_{n+1}
\\  & =  
h(j'_1, i'_1) a_{n+1} h(j'_2, i'_2) a_{n+1} \dots  h(j'_k, i'_k)  a_{n+1} x 
\end{aligned}
$$
with $x \in P$ and both expressions reduced. We already proved that $ \  \mathscr{R} (w) =  \{ a_{n+1}  \} $,  hence $x=1$ and we can cancel out the term $a_{n+1}$ on the right. 
By induction the element expressed by $w_m=h(j_1, i_1) a_{n+1} h(j_2, i_2) a_{n+1} \dots  h(j_m, i_m) $ 
has affine length $m-1$ and can be uniquely written in this form, so $k=m$ and 
$(j'_s, i'_s)= (j_s, i_s)$ for any $s$, $1 \le s \le m$.

\section{First consequences}
\subsection{Left multiplication }
We need some insight into   left multiplication of affine blocks by a simple reflection.  We recall first a well-known property of  Coxeter groups. 

\begin{proposition}\label{Soergel}
Let $(W,S)$ be a Coxeter group and let $I$ be a strict subset of $S$, generating the parabolic subgroup $W_I$.  Let $W^I$ be the set of minimal coset representatives of $W/W_I$. For $s \in I$ and 
$w \in  W^I$, either $s w \in W^I$, or  there is $r \in I$ such that $sw=wr$ (in particular $l(sw)=l(w)+1$). 
\end{proposition}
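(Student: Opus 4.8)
The plan is to reduce everything to two ingredients only: the standard characterization of minimal coset representatives, namely that $w\in W^I$ if and only if $l(wr)=l(w)+1$ for every $r\in I$, and the strong exchange condition. Accordingly I would split the argument according to whether left multiplication by $s$ decreases or increases length, the first case being a one‑line length estimate and the second requiring a short reduced‑word argument.

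First I would treat the decreasing case $l(sw)<l(w)$, i.e.\ $l(sw)=l(w)-1$. Here I claim $sw\in W^I$: for any $r\in I$ we have $l(swr)\ge l(wr)-1=l(w)$ (using $w\in W^I$), whereas $l(sw)=l(w)-1<l(w)$, so $l(swr)>l(sw)$ for every $r\in I$, and hence $sw\in W^I$. This lands us in the first alternative; note that $sw=wr$ is impossible here, since $sw=wr$ with $r\in I$ would force $l(sw)=l(wr)=l(w)+1$ (again because $w\in W^I$), which also explains the parenthetical ``in particular $l(sw)=l(w)+1$'' attached to the second alternative.

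Next the increasing case $l(sw)=l(w)+1$. Assuming $sw\notin W^I$, I pick $r\in I$ with $l(swr)<l(sw)$; combining $l(swr)\ge l(wr)-1=l(w)$ with $l(swr)\le l(sw)-1=l(w)$ forces $l(swr)=l(w)$. Now fix a reduced word $w=s_1\cdots s_k$. Since $l(sw)=l(w)+1$, the word $s\,s_1\cdots s_k$ is a reduced expression for $sw$, and because $l((sw)r)<l(sw)$, the strong exchange condition produces $swr$ from this word by deleting exactly one letter. If the deleted letter is the leading $s$, then $swr=s_1\cdots s_k=w$, i.e.\ $sw=wr$, which is the second alternative. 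If instead the deleted letter is some $s_i$, then $wr=s\cdot(swr)=s_1\cdots\widehat{s_i}\cdots s_k$ would have length at most $k-1=l(w)-1$, contradicting $l(wr)=l(w)+1$; so this case cannot occur, and we are done.

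The only genuinely delicate point is this last step: pure length bookkeeping does not separate $sw$ from $wr$ in the configuration $l(sw)=l(wr)=l(w)+1$ and $l(swr)=l(w)$ (this is exactly the ``braid‑type'' configuration), so one is forced to feed the explicit reduced word of $sw$ into the exchange condition and then use the hypothesis $w\in W^I$ to rule out the interior‑deletion branch. An alternative, more structural route would be to invoke the coset decomposition $W=\bigsqcup_{u\in W^I}uW_I$ with additive lengths together with the fact that left multiplication by $s$ permutes these cosets, but the reduced‑word argument above is self‑contained and is the one I would write out.
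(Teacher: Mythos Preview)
Your proof is correct and follows essentially the same route as the paper: both arguments reduce to applying the exchange condition to a reduced expression for $sw$ beginning with $s$ and observing that the deleted letter must be the leading $s$ (since an interior deletion would contradict $w\in W^I$). The only organizational difference is that the paper first writes $sw=y\alpha$ with $y\in W^I$, $\alpha\in W_I$, and asserts as ``straightforward'' the length facts $l(sw)=l(w)+1$ and $l(\alpha)=1$, whereas you prove the decreasing case $l(sw)<l(w)$ explicitly and then pick $r\in I$ directly without passing through the coset decomposition; your version is thus slightly more self-contained.
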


\begin{proof}
We only have to prove that  $s w \notin W^I$ implies $sw=wr$ for  some $r \in I$. Let $y \in W^I$ and 
$\alpha \in W_I$ such that $sw=y\alpha$,  $\alpha \ne 1$. It is straightforward to prove that $l(sw)=l(w)+1$, $l(sy)=l(y)+1$ and $l(\alpha)=1$. Let $\mathbf y$ be a reduced expression for 
$y$. Since $s \mathbf y$ and $\mathbf y \alpha$ are reduced expressions but $s\mathbf y \alpha$  is not, the hat partner of $\alpha $ is $s$ and   $w= s y \alpha= y$. 
\end{proof}

In our context, working with  affine blocks, that are canonical reduced expressions for the minimal length representatives of right $W(A_n)$-cosets, we can obtain a more precise statement,   actually a particular case of   \cite[Theorem 2.6]{duCloux_transducer}.

\begin{theorem}\label{lefttimes}  
Let $ \mathbf{w_a}= \mathcal B(j_1, i_1) \mathcal B(j_2, i_2)   \dots  \mathcal B(j_m, i_m)    $ be an affine block  of affine length $m\ge 1$, let $w_a$ be the corresponding element in  $W(\tilde A_n)$, and let 
$s \in S_n$. Then:

\begin{enumerate}

\item   either $s w_a$ is not of  minimal length in its right $W(A_n)$-coset, and we have actually 
$l(s w_a)= l(  w_a)+1$  and  $s w_a= w_a\sigma_v$ for some $v$, $1\le v \le n$;   

\item or $s w_a$ has minimal length in its right $W(A_n)$-coset and one of the following holds:

\begin{enumerate}

\item    $s=a_{n+1}$ and $h(j_1, i_1)=1$, so  $a_{n+1}w_a$  reduces to  the affine block 
$$  \mathcal B(j_2, i_2)   \dots  \mathcal B(j_m, i_m)    \qquad (1 \text{ if } m=1).$$  

\item   $s=a_{n+1}$ and $h(j_1, i_1)$ is extremal,  so $a_{n+1}w_a$ is the affine block 
$$ a_{n+1} \mathcal B(j_1, i_1) \mathcal B(j_2, i_2)   \dots  \mathcal B(j_m, i_m) .$$  

\item Otherwise, $s w_a$ is expressed as  an affine block of the following   form: 
$$ \mathcal B(j'_1, i'_1) \mathcal B(j'_2, i'_2)   \dots  \mathcal B(j'_m, i'_m) $$ 
where the $2m$-tuples  $(j_1, i_1, \cdots, j_m, i_m)$ and $(j'_1, i'_1, \cdots, j'_m, i'_m)$  
differ in one and only one entry, say   
 $j'_r\ne j_r$ or $i'_r\ne i_r$.   
If $ l(sw_a)= l(w_a) +1$ we have  $j'_r =j_r-1$ or $i'_r=i_r+1$, while if 
$ l(s w_a)= l(w_a) -1$ we have $j'_r =j_r+1$ or $i'_r=i_r-1$. 
\end{enumerate} 
\end{enumerate} 
\end{theorem}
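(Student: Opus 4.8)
The strategy is to combine Proposition~\ref{Soergel} (applied to the parabolic subgroup $W(A_n)$ of $W(\tilde A_n)$, whose minimal coset representatives are exactly the affine blocks by Corollary~\ref{cos}) with the explicit exchange formulas. Proposition~\ref{Soergel} immediately gives the dichotomy: either $s w_a$ is no longer a minimal-length coset representative, in which case $s w_a = w_a \sigma_v$ for some $v$ with $l(s w_a) = l(w_a)+1$ --- this is case~(1) and requires nothing further --- or $s w_a$ is again a minimal representative, hence is an affine block by Corollary~\ref{cos}, and by Theorem~\ref{AA} it has a \emph{unique} canonical affine-block expression $h(j'_1,i'_1)a_{n+1}\cdots h(j'_m,i'_m)a_{n+1}$ (the affine length is unchanged, being $L(s w_a)=m$ since $|L(s w_a)-L(w_a)|\le L(s)=0$ by Remark~\ref{affinelength}). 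So in case~(2) the entire content is to identify this unique block and show it differs from $\mathbf{w_a}$ in exactly one entry.

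\textbf{Sub-case $s=a_{n+1}$.} If $h(j_1,i_1)=1$, then literally $a_{n+1}w_a = a_{n+1}a_{n+1}h(j_2,i_2)\cdots a_{n+1} = h(j_2,i_2)\cdots h(j_m,i_m)a_{n+1}$ (or $1$ when $m=1$), and one checks the truncated family $(j_s,i_s)_{2\le s\le m}$ still satisfies the pairwise inequalities of Definition~\ref{pairwise} --- conditions (3),(4),(5) are inherited, and the new first pair $(j_2,i_2)$ automatically satisfies condition (1) since it satisfied condition (2) before. This gives~(2a). If instead $h(j_1,i_1)$ is extremal, then $a_{n+1}h(j_1,i_1)a_{n+1}\cdots$ is already an affine block: prepending the pair $(j_0,i_0)=(n{+}1,0)$ (so $h(n{+}1,0)=1$) works because then condition (2) must be checked for the old index $s=1$ against the new $s=0$, and extremality of $h(j_1,i_1)$ is precisely $j_1\le n$ and $i_1\ge 1$ by Lemma~\ref{extremal1}, which is what conditions (2)--(5) demand; this is~(2b). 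These are the ``obvious two cases'' excluded from~(2c).

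\textbf{Sub-case $s=\sigma_k$, $1\le k\le n$.} Here I would argue that left multiplication by $\sigma_k$ propagates through the blocks from left to right. Write $w_a = h(j_1,i_1)a_{n+1}\,w'$ where $w'$ is itself (essentially) an affine block of length $m-1$. Left-multiplying $h(j_1,i_1)$ by $\sigma_k$ and putting the result back in extremal canonical form via Lemma~\ref{extremal1}: either it stays of the shape $h(j'_1,i'_1)\cdot(\text{stuff in }P)$ with $(j'_1,i'_1)$ differing from $(j_1,i_1)$ in exactly one coordinate by $\pm 1$ (the length change dictating the sign: $-1$ or $+1$), and the leftover element of $P$ commutes past the following $a_{n+1}$ and must be absorbed into the next block --- triggering an induction on $m$ --- or the leftover lies outside the appropriate range and instead gets pushed all the way to the right past every $a_{n+1}$ (using that $P$-elements commute with $a_{n+1}$ and that braiding $\sigma_1, \sigma_n$ past $a_{n+1}$ is impossible), landing as a trailing $\sigma_v$, which is exactly case~(1) again. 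The careful bookkeeping --- showing the modified family still obeys the pairwise inequalities after the single-entry change, and that the ``leftover'' is always a single simple reflection $\sigma_1$ or $\sigma_n$ feeding into the next block via Proposition~\ref{exchangeformulas} or the formulas (\ref{braidsleftright}) --- is where essentially all the work sits.

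\textbf{Main obstacle.} The delicate point is controlling how the single-entry perturbation interacts with the \emph{neighbouring} block during the inductive step: when $\sigma_k$ modifies $(j_1,i_1)$ and spits out a $\sigma_1$ (resp.\ $\sigma_n$) to the right, one must verify via the exchange rules of Proposition~\ref{exchangeformulas} that this reflection, upon entering the second block, again produces a single-entry change there and not a cascade; and one must check at each stage that the pairwise inequalities are preserved, which is a genuine case analysis on whether $j_{s-1}>i_{s-1}+1$ or not (the two regimes of Definition~\ref{pairwise}(4),(5)). Uniqueness of the resulting affine block --- hence the fact that the outcome is \emph{forced} and not merely one possible normal form --- is free from Theorem~\ref{AA}, which is what makes the single-entry claim meaningful. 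The length-change sign determination ($j'_r=j_r\mp1$ or $i'_r=i_r\pm1$) then follows from the length formula in Theorem~\ref{AA}, since $l(w_a)=m+\sum_s(n+1-j_s+i_s)$ depends monotonically on each $-j_s$ and each $i_s$.
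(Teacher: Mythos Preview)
Your overall architecture is the paper's: invoke Proposition~\ref{Soergel} for the dichotomy (1)/(2), then induct on $m$ using the explicit one-block formulas and Proposition~\ref{exchangeformulas}. Two points, however, deserve correction.

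\textbf{Direction of the induction and the base case.} The paper peels off the \emph{last} block, writing $w_a=w'_a\,h(j_m,i_m)a_{n+1}$ with $w'_a$ of affine length $m-1$, and applies the inductive hypothesis to $\sigma_u w'_a$. Your left-to-right variant is legitimate, but your description of the base step is garbled. Lemmas~\ref{jsupiplus1} and~\ref{jinfiplus1} give a clean dichotomy for $\sigma_u\,h(j,i)a_{n+1}$: either (i) a single-entry change $h(j',i')a_{n+1}$ with \emph{no} leftover at all, or (ii) the block is \emph{unchanged} and a $\sigma_v$ emerges on the right, $h(j,i)a_{n+1}\sigma_v$. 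There is no case ``$(j'_1,i'_1)$ changed \emph{and} leftover in $P$'': the two phenomena are mutually exclusive. In case (ii) the emerging $\sigma_v$ now left-multiplies the remaining affine block of length $m-1$, and that is the induction.

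\textbf{The ``delicate point'' is not a cascade.} When the single-entry change lands at a block adjacent to an unchanged one and the pairwise inequalities look threatened, the paper does \emph{not} feed a reflection into the next block. Two separate arguments dispose of the two directions. If $l(\sigma_u w_a)=l(w_a)-1$, then $\sigma_u w_a$ is already of minimal length in its coset (being shorter than the minimal representative $w_a$), so by Theorem~\ref{AA} it \emph{is} an affine block and the inequalities hold automatically; nothing to check. If $l(\sigma_u w_a)=l(w_a)+1$ and the inequalities fail at the junction, one applies the appropriate identity of Proposition~\ref{exchangeformulas} to the offending pair of blocks, obtaining an expression ending in a single $\sigma_t$ ($t\in\{1,n\}$); Proposition~\ref{Soergel} then forces $\sigma_u w_a=w_a\sigma_t$, i.e.\ we are actually in case (1). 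So the obstacle dissolves via minimality-plus-uniqueness on one side and Soergel on the other, not by further propagation.

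\textbf{A missing sub-case.} You handle $s=a_{n+1}$ only when $h(j_1,i_1)$ is trivial or extremal, but (2c) also covers $s=a_{n+1}$ with $h(j_1,i_1)=\lfloor j_1,n\rfloor$ ($1<j_1\le n$) or $h(j_1,i_1)=\lceil i_1,1\rceil$ ($i_1\ge 1$). The paper dispatches this in one line: a braid $a_{n+1}\sigma_n a_{n+1}=\sigma_n a_{n+1}\sigma_n$ (resp.\ with $\sigma_1$) reduces it to left multiplication by $\sigma_n$ (resp.\ $\sigma_1$) on the shorter affine block $h(j_2,i_2)a_{n+1}\cdots$, already treated.
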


\begin{remark}
In the case when $ l(s w_a)= l(w_a) -1$,  Theorem \ref{lefttimes} says that the ``hat partner'' of $s$ is a $\sigma_{j_r}$ or a $\sigma_{i_r}$ and that the resulting expression is in canonical form, i.e.  an affine block.  
\end{remark}

\begin{proof}  We establish first our statement in the case when $s=\sigma_u$ with  $1\le u \le n$. 
The case of affine length 1 is detailed in the following Lemma,   easily checked, in fact an automaton describing left multiplication of an affine brick $\mathcal B(j,i)$ by  
$\sigma_u $.  The result is either $\mathcal B(j,i)\sigma_v$ for some $v$, or an affine  brick of length $l(\mathcal B(j,i) \pm 1$.

\begin{lemma}\label{automaton} 
 For $1 \le j \le n+1$ and $n-1 \ge i \ge 0$,    we have if $1\le u \le n$:\\
$$
\sigma_u \mathcal B(j,i)= \left\{ \begin{matrix} \mkern-36mu \mathcal B(j,i)  \sigma_{u-1} \quad  \text{ if } 
\left\{ \begin{matrix} \mathcal B(j,i)   \text{ short and } j<u  ,  \quad \  \cr \mathcal B(j,i)   \text{ long and } i+2 < u ; 
 \end{matrix} \right.  \qquad  \quad \qquad
\cr   \cr 
\mkern-36mu \mathcal B(j,i)  \sigma_u \quad  \text{ if } 
\left\{ \begin{matrix} \mathcal B(j,i)   \text{ short and } i+1<u <j-1  ,  \cr \mathcal B(j,i)   \text{ long and } j < u <i+1 ;  \qquad  \  
 \end{matrix} \right.  \qquad 
\cr  \cr 
\mkern-36mu \mathcal B(j,i)  \sigma_{u+1} \quad  \text{ if } 
\left\{ \begin{matrix} \mathcal B(j,i)   \text{ short and } u <i  , \ \quad  \  \cr \mathcal B(j,i)   \text{ long and }  u < j-1 ; 
 \end{matrix} \right.  \qquad  \quad \qquad  
\cr \cr  
 \mkern-36mu \mathcal B(j-1,i) \quad   \text{ if } 
 u= j-1 ,  \qquad \qquad \qquad \qquad \qquad \quad   \qquad 
\cr   \cr  \mkern-36mu \  \qquad 
 \mathcal B(j,i+1)   \  \     \text{ if } 
\left\{ \begin{matrix} \mathcal B(j,i)   \text{ short and }i+1 <j-1  \text{ and }  u=i+1 ,  \cr \mathcal B(j,i)   \text{ long and }   u =i+2 ; \qquad \qquad  \qquad  \qquad 
 \end{matrix} \right.    \cr \cr  
\mkern-36mu  \mathcal B(j,i-1) \quad   \text{ if } 
\left\{ \begin{matrix} \mathcal B(j,i)   \text{ short and }   u=i  , \quad  \cr \mathcal B(j,i)   \text{ long and }   u =i+1 ;  
 \end{matrix} \right.  \qquad \quad \qquad 
\cr   \cr 
\mkern-36mu \mathcal B(j+1,i)   \quad   \text{ if } u=j.  \qquad  \qquad \qquad \qquad \qquad \qquad \    \qquad 
 \end{matrix}  \right.
$$

In particular  $\mathscr{L} (\mathcal B(j,i) )$ is the set 
$   \{   \sigma_j, \sigma_{i}\}  $  if $\mathcal B(j,i) $ is short, the set $   \{   \sigma_j, \sigma_{i+1}\}  $ if $\mathcal B(j,i)$ is long.
   \end{lemma}

We prove the general case by induction on $m$. Assuming the assumptions hold up to $m-1\ge 1$, we let 
$w'_a= \mathcal B(j_1, i_1) \mathcal B(j_2, i_2)   \dots  \mathcal B (j_{m-1}, i_{m-1})   $ and study  
$\sigma_u w_a =  (\sigma_u w'_a )  \mathcal B(j_m, i_m)   $ according to the shape of $\sigma_u w'_a $. 

\begin{itemize}
\item 
If $\sigma_u w'_a $ is not of minimal length in its coset, we write $\sigma_u w'_a= w'_a \sigma_v $ for some $v$, $1\le v \le m$, so that 
$$\sigma_u w_a =   w'_a  \sigma_v  \mathcal B(j_m, i_m)   .$$  
We deal with $  \sigma_v   \mathcal B(j_m, i_m)   $ using   the previous Lemma. If some $\sigma_z$ appears on the right we are in case (1). Assume now $  \sigma_v  \mathcal B(j_m, i_m)   =  \mathcal B(j'_m, i'_m) $. If  
  $j'_m=j_m-1$ or $i'_m=i_m+1$, we are in case (2c) since we get an affine block.  
 If  
  $j'_m=j_m+1$ or $i'_m=i_m-1$, it seems at first that the resulting expression  might not be canonical,  depending on the value of $j_{m-1}$ or $i_{m-1}$. But actually the expression has no other choice than being canonical. Indeed we are in a case where $ l(\sigma_u w_a)= l(w_a) -1$, hence 
$\sigma_u w_a$ has minimal length in its right coset and by Proposition     \ref{exchangeformulas}    the required inequalities are  satisfied.   

\item
If $\sigma_u w'_a $ is of minimal length in its coset, we write it as an affine block and get 
$$\sigma_u w_a =    \mathcal B(j'_1, i'_1) \mathcal B(j'_2, i'_2)   \dots  \mathcal B (j'_{m-1}, i'_{m-1})      \mathcal B(j_m, i_m)   .$$ 
This is an affine block except possibly when the only difference between the $i,j$'s and the $i', j'$'s happens for $j'_{m-1}$ or $ i'_{m-1}$ and the resulting pairs $ (j'_{m-1}, i'_{m-1}) $ and $(j_m, i_m)$ 
do not satisfy the required inequalities. 
In such a case we apply Proposition 
\ref{exchangeformulas} and get 
$$\sigma_u w_a =   \mathcal B(j_1, i_1) \mathcal B(j_2, i_2)   \dots  \mathcal B(j''_{m-1}, i''_{m-1}) \mathcal B(j''_m, i''_m)   \sigma_t$$ 
with $t=1$ or $n$. Proposition \ref{Soergel} leaves only one choice, namely 
$\sigma_u w_a  = w_a \sigma_t$. This finishes the proof in the case $s=\sigma_u$. 
\end{itemize}

We take next $s=a_{n+1}$.   The cases when $h(j_1, i_1) $ is extremal or equal to $1$ are obvious. Otherwise we have $h(j_1, i_1) =   \lfloor j_1, n  \rfloor$ with $1< j_1\le n$ or 
$h(j_1, i_1) =   \lceil i_1,1 \rceil  $ with $i_1\ge 1$. Using a braid we reduce the claim to 
the one we have already proved for $s=\sigma_n$ or $s=\sigma_1$, left-multiplying  the affine block starting at $h(j_2, i_2)$. Checking that the resulting expression satisfies the pairwise inequalities  is 
straightforward and left to the reader. 
\end{proof}

\subsection{Right descent set}\label{Rds}
In this subsection we study the right descent set  $\mathscr{R} (w) $ of an element 
  $w$  in $W(\tilde A_{n})$ with $L(w) =m > 0 $,  given canonically as 
$$
w= \mathcal B(j_1, i_1) \mathcal B(j_2, i_2)   \dots  \mathcal B(j_m, i_m)   x, \qquad x \in W(A_n), 
$$
(hence the family $(j_s,i_s)_{1\le s \le m}$ satisfies the pairwise inequalities).  
 
The first observation is the following:  $  \    \mathscr{R} (x) \subseteq \mathscr{R} (w) \subseteq  \mathscr{R} (x) \cup  \{ a_{n+1}  \}  .$   
 Indeed  if a simple reflection $s$ other than $a_{n+1}$ does not belong to $ \mathscr{R} (x)$, then $ws$ is reduced by Theorem \ref{AA}. 

The determination of $\mathscr{R} (w) $ then amounts to 
giving the conditions for $ a_{n+1} $ to belong to this set. 
 Writing $x=h(j,i)p$, $p \in P$,   Lemma \ref{wplemma} shows that these conditions   depend only on the  $h(j,i)$ part of $x$, not on $p$.  Of course 
Theorem \ref{AA}  ensures that if $(j_m, i_m), (j,i)$ satisfy the pairwise  inequalities, then $ a_{n+1} $does not belong to $\mathscr{R} (w) $. 
  It is tempting to believe that if $x$ is extremal, then 
$w a_{n+1} $ is reduced.   This holds for $m=1$ (Lemma \ref{casem2reduced}) but it is not true in general, as we can see in the following Lemma that gives a full account of the case $m=2$.

\begin{lemma}\label{length2andx}
We consider an expression of the following form: 
$$
\mathcal B(j_1, i_1) \mathcal B(j_2, i_2)  x a_{n+1}  $$

\noindent 
where   $   x \in W(A_n)$ and $(j_1, i_1), (j_2,i_2)$ satisfy the pairwise inequalities,   
and we write $x=h(j,i)p$, $p \in P$.   If  $h(j,i)\ne 1$ this expression is reduced except:  

\begin{itemize}

\item  in the  four ``deficient''  cases listed in  Lemma \ref{casem2reduced}, with $j_1, i_1$ replaced by $j_2, i_2$, 

\item in  the  cases 
 listed below together with the hat partner of  the rightmost $a_{n+1}$:   
\begin{enumerate}
\item  $h(j,i)= \sigma_n \sigma_1 $ and $j_2>1$ and  $1\le i_2 < n-1$, 

the hat partner is the leftmost  $a_{n+1}$; 

\item $h(j,i)= h(n,i)$ and $1\le i \le i_2 < n-1$, $i < j_2$, \text{ and } $i_1 \ge i-1$,               

the hat partner is the $\sigma_{i-1}$ in 
$h(j_1,i_1) =    \lfloor j_1, n  \rfloor  \sigma_{i_1} \cdots \sigma_{i-1} \cdots \sigma_1  $; 

\item $h(j,i)= h(n,i)$ and $1 \le i \le i_2 < n-1$, $i \ge j_2$, \text{ and } $i_1 \ge i$,               

the hat partner is the $\sigma_{i}$ in 
$h(j_1,i_1) =    \lfloor j_1, n  \rfloor  \sigma_{i_1} \cdots \sigma_{i} \cdots \sigma_1  $.    
\end{enumerate}
\end{itemize}
We note that in  cases (1), (2), (3)  above, the element $x$ is extremal. 
\end{lemma}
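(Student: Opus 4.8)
The plan is to follow the template of the proof of Lemma~\ref{casem2reduced}, using that lemma as a black box for the ``inner'' part of the word. Set $w_2:=h(j_1,i_1)a_{n+1}h(j_2,i_2)a_{n+1}$; by Theorem~\ref{AA} this is reduced with $\mathscr{R}(w_2)=\{a_{n+1}\}$, so $w_2x$ is reduced by (\ref{parabolic}), and writing $x=h(j,i)p$ with $p\in P$, Lemma~\ref{wplemma} shows $w_2xa_{n+1}$ is reduced if and only if $w_2h(j,i)a_{n+1}$ is. So I may assume $x=h(j,i)\neq 1$. By Lemma~\ref{Bourbaki} the expression $w_2h(j,i)a_{n+1}$ fails to be reduced exactly when the rightmost $a_{n+1}$ has a unique hat partner $s$; since the commutant of $a_{n+1}$ in $W(A_n)$ is $P$ (Lemma~\ref{Rofw}) and $h(j,i)\in P$ forces $h(j,i)=1$, the partner $s$ cannot be the $a_{n+1}$ directly to the left of $h(j,i)$. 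Hence $s$ lies strictly inside the suffix $h(j_2,i_2)a_{n+1}h(j,i)$, or is the leftmost $a_{n+1}$, or is a $\sigma_u$ inside $h(j_1,i_1)$.

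First I would dispose of the case $s$ strictly inside $h(j_2,i_2)a_{n+1}h(j,i)$: since the prefix $h(j_1,i_1)a_{n+1}$ conjugates the attached reflections $t(a_{n+1})$ and $t(s)$ by the same element, such an $s$ is the hat partner of the rightmost $a_{n+1}$ in $w_2h(j,i)a_{n+1}$ if and only if it is so in $h(j_2,i_2)a_{n+1}h(j,i)a_{n+1}$, and Lemma~\ref{casem2reduced} (with $j_1,i_1$ replaced by $j_2,i_2$) tells us this happens precisely in the four listed ``deficient'' cases, with the hat partners stated there. By uniqueness of the hat partner, in those four cases this is the only obstruction, which yields the first bullet of the statement.

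So assume now that $(j_2,i_2),(j,i)$ is not one of those four pairs, i.e.\ $h(j_2,i_2)a_{n+1}h(j,i)a_{n+1}$ is reduced; then $w_2h(j,i)a_{n+1}$ is non-reduced if and only if the rightmost $a_{n+1}$ has hat partner either the leftmost $a_{n+1}$ or a $\sigma_u$ in $h(j_1,i_1)$. In the first case, truncating off $h(j_1,i_1)$ and rearranging via Lemma~\ref{Bourbaki} turns the equality into the statement that $g:=h(j_2,i_2)a_{n+1}h(j,i)$ commutes with $a_{n+1}$; normalising $g\,a_{n+1}\,g^{-1}$ with the braid relations relating $a_{n+1}$ to $\sigma_1$ and $\sigma_n$ and with the moves (\ref{productsof2Legobricks}) forces $h(j,i)=\sigma_n\sigma_1$ with $j_2>1$ and $1\le i_2<n-1$, which is case~(1). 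In the second case, truncating to the left of $s=\sigma_u$ and rearranging exactly as in the proof of Lemma~\ref{casem2reduced} yields
\[
V(j'_1,i'_1)\;=\;g\,a_{n+1}\,g^{-1},\qquad g=h(j_2,i_2)\,a_{n+1}\,h(j,i),
\]
where $(j'_1,i'_1)$ records the left-truncation of $h(j_1,i_1)$ and $V(j'_1,i'_1)$ is the reflection of (\ref{Vji}); equivalently $a_{n+1}\bigl(h(j,i)a_{n+1}h(j,i)^{-1}\bigr)a_{n+1}=h(j_2,i_2)^{-1}V(j'_1,i'_1)h(j_2,i_2)$. Both sides are reflections which can be written out explicitly using (\ref{productsof2Legobricks}) and (\ref{productsof3Legobricks}); after discarding the branches of (\ref{Vji}) in which $V(j'_1,i'_1)=D$ (ruled out by an affine-length count, using Lemma~\ref{almostrigid} and the assumed reducedness of $g\,a_{n+1}$), matching the remaining forms forces $h(j,i)=h(n,i)$ with $1\le i\le i_2<n-1$ together with $i_1\ge i-1$ when $i<j_2$ and $i_1\ge i$ when $i\ge j_2$, and reads off the $\sigma_{i-1}$ (resp.\ $\sigma_i$) in $h(j_1,i_1)$ as the hat partner — these are cases~(2) and~(3). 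Finally, in each of (1)--(3) one has $h(j,i)=\sigma_n\sigma_1$ or $h(j,i)=h(n,i)$ with $i\ge 1$, hence $x$ is extremal.

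The heart of the argument — and the only genuinely laborious step — is this last matching of reflections: one must run through the six branches of (\ref{Vji}), bring both sides of $V(j'_1,i'_1)=g\,a_{n+1}\,g^{-1}$ (and of its analogue for the leftmost $a_{n+1}$) into normal form with the commutation and braid rules, verify that the $D$-branches are inconsistent with $g\,a_{n+1}$ being reduced, and extract for exactly which configurations of $(j_1,i_1,j_2,i_2,j,i)$ the identity holds while pinning down the position of the hat partner. Each individual computation is routine; it is the bookkeeping that carries the weight.
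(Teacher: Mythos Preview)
The paper itself \emph{skips} the proof of this lemma entirely, writing only ``We skip the (technical) proof of this Lemma.'' So there is no paper proof to compare against; your outline is in fact more than what the paper provides.

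Your approach is the natural one and mirrors the template of Lemma~\ref{casem2reduced}: reduce to $x=h(j,i)$ via Lemma~\ref{wplemma}, locate the hat partner of the rightmost $a_{n+1}$ via Lemma~\ref{Bourbaki}, and split according to whether it lies in the suffix $h(j_2,i_2)a_{n+1}h(j,i)$ (handled by Lemma~\ref{casem2reduced}), is the leftmost $a_{n+1}$ (giving the commutation $g\,a_{n+1}=a_{n+1}g$ with $g=h(j_2,i_2)a_{n+1}h(j,i)$), or lies in $h(j_1,i_1)$ (giving $V(j_1',i_1')=g\,a_{n+1}\,g^{-1}$). The structure is sound and consistent with the paper's methods throughout \S3.

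One point to tighten: your dismissal of the $D$-branches of (\ref{Vji}) by ``an affine-length count'' is not quite parallel to the $m=1$ case. There, $h(j,i)a_{n+1}h(j,i)^{-1}$ visibly has affine length~$1$, contradicting $L(D)=2$. Here $g\,a_{n+1}\,g^{-1}$ has three occurrences of $a_{n+1}$ and affine length a priori anywhere from~$1$ to~$3$, so a bare affine-length comparison does not exclude $D$. You will need to actually compute $g\,a_{n+1}\,g^{-1}$ (pushing the middle $a_{n+1}$ through $h(j,i)$ and $h(j_2,i_2)$ with braids, as in \S\ref{maincomputation}) and check that the resulting reduced form is incompatible with~$D$; alternatively, argue via the reducedness of $g\,a_{n+1}$ together with an explicit length comparison. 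This is exactly the kind of bookkeeping you flag in your last paragraph, so the gap is one of detail rather than strategy.
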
  

We skip the (technical) proof of this Lemma. 
 Further computation shows that for $m=3$ the list of non reduced cases grows bigger, therefore we do not pursue this matter for now. 

Observing that actually, for $m\ge 2$:  
 $$    \mathscr{R} (x) \! \subseteq  \mathscr{R} (\mathcal B(j_{\scriptscriptstyle m}, i_{\scriptscriptstyle m})      x) 
\! \subseteq  \mathscr{R} (\mathcal B(j_{\scriptscriptstyle m-1}, i_{\scriptscriptstyle m-1}) \mathcal B(j_{\scriptscriptstyle m}, i_{\scriptscriptstyle m})   x)  \!  \subseteq \mathscr{R} (w)    
\!\subseteq  \mathscr{R} (x) \cup  \{ a_{\scriptscriptstyle  n+1}  \}  $$
we draw  from  Lemmas \ref{casem2reduced} and \ref{length2andx} a 
{\em list of cases in which $a_{n+1}$ does belong to $ \mathscr{R} (w)$,} together with its hat partner: 
\begin{enumerate}
\item 
\begin{enumerate}
\item  $h(j,i)=  \lceil  i,1 \rceil $ and $i_m \ge i \ge 1$, 

the hat partner is the $\sigma_i$ in 
$h(j_m,i_m) =  \lfloor j_m, n  \rfloor \sigma_{i_m} \cdots \sigma_i \cdots \sigma_1$; 
\item $h(j,i)=  \lfloor j, n  \rfloor$ and $1<j \le n $, $j_m\le j$, $i_m < j-1$, 

the hat partner is the $\sigma_{j}$ in 
$h(j_m,i_m) =   \sigma_{j_m} \cdots \sigma_{j} \cdots \sigma_n  \lceil  i_m,1 \rceil $; 
\item $h(j,i)=  \lfloor j, n  \rfloor$ and $2<j \le n $, $j_m<j$, $i_m \ge j-1$, 

the hat partner is the $\sigma_{j-1}$ in 
$h(j_m,i_m) =   \sigma_{j_m} \cdots \sigma_{j-1} \cdots \sigma_n  \lceil  i_m,1 \rceil $; 
\item $h(j,i)=  \lfloor 2, n  \rfloor$ and  $j_m=1$, $i_m =1$, 

 the hat partner is the leftmost  $\sigma_{1}$ in 
$h(j_m,i_m) =   \sigma_{1} \cdots  \sigma_n \sigma_1$.  
\end{enumerate}
 
\item 
\begin{enumerate}
\item  $h(j,i)= \sigma_n \sigma_1 $ and $j_m>1$ and  $1\le i_m < n-1$, 

the hat partner is the $a_{n+1}$ on the  left of $ h(j_m, i_m)$; 

\item $h(j,i)= h(n,i)$ and $1\le i \le i_m < n-1$, $i < j_m$, \text{ and } $i_{m-1} \ge i-1$,               

the hat partner is the $\sigma_{i-1}$ in 

$h(j_{m-1},i_{m-1}) =    \lfloor j_{m-1}, n  \rfloor  \sigma_{i_{m-1}} \cdots \sigma_{i-1} \cdots \sigma_1  $; 

\item $h(j,i)= h(n,i)$ and $1 \le i \le i_m < n-1$, $i \ge j_m$, \text{ and } $i_{m-1} \ge i$,               

the hat partner is the $\sigma_{i}$ in 

$h(j_{m-1},i_{m-1}) =    \lfloor j_{m-1}, n  \rfloor  \sigma_{i_{m-1}} \cdots \sigma_{i} \cdots \sigma_1  $.    
\end{enumerate}
\end{enumerate}

We point out again that this list is not exhaustive if $m\ge 3$.

\subsection{A tower of canonical reduced expressions }\label{Arr}

We study the affine length in the tower of injections $W(\tilde A_{n-1} ) \hookrightarrow W(\tilde A_{n})$
built with  the group monomorphism 
								\begin{eqnarray} 
				R_{n}: W(\tilde A_{n-1} ) &\longrightarrow& W(\tilde A_{n} )\nonumber\\
				\sigma_{i} &\longmapsto& \sigma_{i} \text{ for } 1\leq i\leq n-1\nonumber\\
				a_{n} &\longmapsto& \sigma_{n} a_{n+1}\sigma_{n}  \nonumber
			\end{eqnarray}
  from    \cite[Lemma 4.1]{Sadek_2016}. We produce below the canonical reduced expression of
$R_n(w)$ given the canonical reduced expression of $w \in   W(\tilde A_{n-1} )$ from Theorem \ref{AA}. In particular, 
$R_n(w)$ and $w$ have the same affine length and 
 the Coxeter length of $R_n(w)$ is fully determined by the Coxeter length and affine length of $w$. 

In this subsection we need to include the dependency on $n$ in the notation, so we write 
$ h_n(r,i)  =   \lfloor r,n \rfloor \lceil  i,1 \rceil $. 

\begin{theorem}\label{towerandcanonical}  Let  
$$
w= h_{n-1}(j_1, i_1) a_{n} h_{n-1}(j_2, i_2) a_{n} \dots  h_{n-1}(j_m, i_m)  a_{n} x 
$$
be the canonical reduced expression of an element  $w $  in $ W(\tilde A_{n-1} )$, where $x$ is the canonical reduced expression of an element in $W(A_{n-1})$. Substituting $ \sigma_{n} a_{n+1}\sigma_{n} $ for $a_n$ in this expression produces a reduced expression which can be transformed into the   canonical reduced expression of $R_n(w)$, that  has the following shape:

\begin{equation}\label{imageAnminusone}
R_n(w)= h_n(j_1, i_1) a_{n+1} h_n(j_2, i'_2) a_{n+1} \dots  h_n(j_m, i'_m)  a_{n+1}  \lfloor t, n  \rfloor x  
\end{equation}
where, letting 
$$   s=  \max \{k \   /  \    1 \le k \le m  \text{ and }  n-k  - i_k >0 \} ,$$ 
we have:  
$$
i'_k=i_k  \text{ for } k \le s, \quad i'_k=i_k+1  \text{ for } k > s, \quad   t= n-s+1.  
$$
This implies $$L(R_n(w))= L(w), \qquad  l(R_n(w))= l(w)+ 2 L(w),$$
hence replacing $a_n$ by $ \sigma_{n} a_{n+1}\sigma_{n} $ in a  reduced expression for $w$ 
  produces a reduced expression for $R_n(w)$ if and only if the expression for $w$ is affine length reduced. 
\end{theorem}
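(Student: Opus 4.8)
The plan is to compute what happens when we substitute $\sigma_n a_{n+1}\sigma_n$ for each $a_n$ in the given canonical reduced expression of $w$, track the length carefully, and then massage the result into canonical form. First I would write $R_n(w)$ as the raw substituted word
$$
R_n(w) = h_{n-1}(j_1,i_1)\,\sigma_n a_{n+1}\sigma_n\, h_{n-1}(j_2,i_2)\,\sigma_n a_{n+1}\sigma_n \cdots h_{n-1}(j_m,i_m)\,\sigma_n a_{n+1}\sigma_n\, x,
$$
noting that since $x$ and each $h_{n-1}(j_s,i_s)$ involve only $\sigma_1,\dots,\sigma_{n-1}$, they all commute with $\sigma_n$. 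The key combinatorial observation is that $h_{n-1}(j_s,i_s) = \lfloor j_s,n-1\rfloor\lceil i_s,1\rceil$, and we have the identities $\sigma_n\lfloor j_s,n-1\rfloor = \lfloor j_s,n\rfloor$ (adjoining $\sigma_n$ on the right of the ascending run $\sigma_{j_s}\cdots\sigma_{n-1}$) and the slide rules from (\ref{productsof2Legobricks}) for moving a trailing $\sigma_n$ leftward past $\lceil i,1\rceil$, namely $\lceil i,1\rceil\sigma_n = \sigma_n\lceil i,1\rceil$ when $i\le n-1$ trivially, but more usefully the interaction with the next $\lfloor j_{s+1},n-1\rfloor$ block. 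I would process the word from left to right: the leading $\sigma_n$ of the first substituted $a_n$ combines with $\lfloor j_1,n-1\rfloor$ to give $\lfloor j_1,n\rfloor$, producing $h_n(j_1,i_1)a_{n+1}$ followed by a trailing $\sigma_n$; this trailing $\sigma_n$ then commutes down to meet the next $\lfloor j_2,n-1\rfloor \lceil i_2,1\rceil$, and the combination $\sigma_n \lfloor j_2,n-1\rfloor\lceil i_2,1\rceil$ must be rewritten. The crucial point is whether the trailing $\sigma_n$ gets absorbed (turning $h_{n-1}(j_s,i_s)$ into $h_n(j_s,i_s)$) or whether it must be pushed further right by bumping $i_s$ to $i_s+1$; this is exactly governed by the condition $n-k-i_k>0$, i.e. whether there is "room" in the $k$-th block.

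The second step is to make this dichotomy precise. I expect that as long as $n-k-i_k>0$ the trailing $\sigma_n$ from the previous block gets consumed cleanly, giving $h_n(j_k,i_k)a_{n+1}$ with no leftover, but at the first index $k$ where $n-k-i_k\le 0$ — wait, actually re-reading the statement, $s=\max\{k : n-k-i_k>0\}$, so for $k\le s$ we keep $i_k'=i_k$ and for $k>s$ we bump to $i_k'=i_k+1$, and a final $\lfloor t,n\rfloor$ with $t=n-s+1$ survives to be absorbed into $x$. So the trailing $\sigma_n$ actually accumulates: each of the $m$ substitutions contributes one $\sigma_n$ on the left (absorbed into the ascending run to raise the top from $n-1$ to $n$) and one $\sigma_n$ on the right; the right $\sigma_n$'s pile up and interact. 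I would carry out the left-to-right reduction bookkeeping the "pending suffix" which is some $\lfloor t',n\rfloor$ or a power of $\sigma_n$, showing by a short induction on the block index that after processing block $k$ the pending suffix is exactly $\lfloor n-k+1,n\rfloor$ as long as $k\le s$, and that once $k>s$ the pending piece stabilizes and the $i$-indices shift up by one. The precise identities needed are all instances of the Lego-brick relations in (\ref{productsof2Legobricks}) and the straightforward $\sigma_n\lfloor j,n-1\rfloor=\lfloor j,n\rfloor$, $\lfloor j,n-1\rfloor \sigma_n=\sigma_n\lfloor j+1,n\rfloor$ type moves.

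The third step is the length and canonicity claim. Once the expression (\ref{imageAnminusone}) is obtained, I would verify that the family $(j_1,i_1'),(j_2,i_2'),\dots,(j_m,i_m')$ satisfies the pairwise inequalities of Definition \ref{pairwise}: the $j$'s are unchanged so conditions (1),(3)(the $j$-part),(4) are inherited, and the shift $i_k\mapsto i_k$ or $i_k+1$ with the threshold at $s$ is designed precisely so that the monotonicity $i_k'\ge i_{k-1}'$ and the implications in (3),(5) still hold — this needs a brief check using $n-k-i_k>0 \iff k\le s$. Then Theorem \ref{AA} applies and tells us (\ref{imageAnminusone}) is reduced (after putting $\lfloor t,n\rfloor x$ into usual canonical form on the right), with
$$
l(R_n(w)) = m + \sum_{k=1}^m (n+1-j_k+i_k') + l(\lfloor t,n\rfloor x).
$$
Comparing with $l(w) = m + \sum_{k=1}^m(n-j_k+i_k) + l(x)$ (from Theorem \ref{AA} applied in rank $n-1$), the difference is $\sum_{k=1}^m 1 + \sum_{k>s}1 + (l(\lfloor t,n\rfloor x)-l(x))$; since $\lfloor t,n\rfloor$ has length $n-t+1 = s$ and $\lfloor t,n\rfloor x$ is reduced (as $t\ge 2$ so this sits in the parabolic avoiding $\sigma_1$, and $x$'s relation to it is controlled), we get $l(R_n(w)) - l(w) = m + (m-s) + s = 2m = 2L(w)$. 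Finally, $L(R_n(w))=m=L(w)$ follows because (\ref{imageAnminusone}) is an affine-length-reduced canonical form by Theorem \ref{AA}; and the last sentence is immediate — the naive substitution of $\sigma_n a_{n+1}\sigma_n$ for each $a_n$ in a reduced expression of $w$ introduces exactly $2\cdot(\#\text{ of }a_n)$ extra letters, which equals $2L(w)$ precisely when the expression for $w$ uses $a_n$ the minimal number of times, i.e. is affine length reduced.

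The main obstacle I anticipate is the second step: correctly tracking how the cascade of trailing $\sigma_n$'s propagates through the successive $h_{n-1}$ blocks and pinning down that the switch-over happens exactly at the index $s=\max\{k : n-k-i_k>0\}$. Getting the off-by-one in $t=n-s+1$ and the boundary behaviour of the $i_k'$ right — especially handling the degenerate sub-cases where $i_k=0$ and $j_k=1$, or where $h_{n-1}(j_k,i_k)$ is not extremal — will require care, and is where a genuinely detailed induction is needed rather than a one-line appeal to earlier lemmas.
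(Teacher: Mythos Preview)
Your approach is essentially the paper's: substitute, process blocks left to right by induction on $m$, carry a pending suffix $\lfloor t',n\rfloor$ that either shrinks (decrementing $t'$, keeping $i_k$) or stabilizes (bumping $i_k$ to $i_k+1$) according to whether $n-k-i_k>0$, then verify the pairwise inequalities and count lengths. The one point you should not underestimate is proving the inequality $t' > j_{k+1}$ at each step (needed to apply the relevant identity from (\ref{productsof2Legobricks}) and swap $\lfloor t',n\rfloor$ past $\lfloor j_{k+1},n-1\rfloor$); the paper extracts this from the pairwise inequalities, and it is exactly where the ``off-by-one'' you worry about is controlled---also, your justification that $\lfloor t,n\rfloor x$ is reduced should simply be that $\lfloor t,n\rfloor$ is a distinguished $W(A_n)/W(A_{n-1})$ coset representative and $x\in W(A_{n-1})$, not a parabolic-avoidance argument.
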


Note that  we have  $s \le n-1$.  

\begin{proof} We observe first that the expression (\ref{imageAnminusone}) given for $R_n(w)$ is  canonical:  the pairwise inequalities are clearly satisfied, and  the fact that 
$ \lfloor t, n  \rfloor x$, $  x \in W(A_{n-1})$, is reduced, has been used since the beginning of this paper.  The last part of the Proposition states immediate consequences. We only have to produce form 
(\ref{imageAnminusone}).

Substituting $ \sigma_{n} a_{n+1}\sigma_{n} $ for $a_n$ in the canonical reduced expression of $w$  gives: 
$$
R_{\scriptscriptstyle n} (w)= h_{\scriptscriptstyle n-1} (j_{\scriptscriptstyle 1} , i_{\scriptscriptstyle 1} )  \sigma_{n} a_{\scriptscriptstyle n+1} \sigma_{n} h_{\scriptscriptstyle n-1} (j_{\scriptscriptstyle 2} , i_{\scriptscriptstyle 2} )  \sigma_{n} a_{\scriptscriptstyle n-1} \sigma_{n} \dots  h_{\scriptscriptstyle n-1} (j_{\scriptscriptstyle m} , i_{\scriptscriptstyle m} )   \sigma_{n} a_{\scriptscriptstyle n-1} \sigma_{n} x.  
$$
For the leftmost term, we have $ h_{n-1}(j_1, i_1)  \sigma_{n} =  h_{n}(j_1, i_1) $ since 
$i_1 \le n-2$. For the next one we have 
$$\sigma_{n} h_{n-1}(j_2, i_2)  \sigma_{n} =  \lfloor j_2, n-2  \rfloor \sigma_{n}\sigma_{n-1}\sigma_{n}  \lceil i_2,1 \rceil  =  \lfloor j_2, n  \rfloor \sigma_{n-1}   \lceil i_2,1 \rceil . $$
If $i_2=n-2$, we obtain $h_{n}(j_2, n-1) $, otherwise $ \sigma_{n-1}  $ travels to the right; so if $m=1$ or $m=2$ our claim holds. Assuming the claim holds up to $m-1  \ge 2$, we prove it for $m$. 
Let $  s= s_{m-1}=  \max \{k \   /  \    1 \le k \le m-1  \text{ and }  n-k  - i_k >0 \} $ and $ t_{m-1} = n-s_{m-1}+1$. We 
  have 
$$
R_{\scriptscriptstyle n} (w)= h_{\scriptscriptstyle n} (j_{\scriptscriptstyle 1} , i_{\scriptscriptstyle 1} ) a_{\scriptscriptstyle n+1}  \dots  h_{\scriptscriptstyle n} (j_{\scriptscriptstyle m-1} , i'_{\scriptscriptstyle m-1} )  a_{\scriptscriptstyle n+1}   \lfloor t_{\scriptscriptstyle m-1} , n  \rfloor  h_{\scriptscriptstyle n-1} (j_{\scriptscriptstyle m} , i_{\scriptscriptstyle m} )   \sigma_{\scriptscriptstyle n}  a_{\scriptscriptstyle n+1} \sigma_{\scriptscriptstyle n}  x.  
$$

We show first: $ t_{m-1} > j_m$. Indeed we have    $ t_{m-1} > i_s+1$ -- in particular 
$ t_{m-1} -1 > 1$, to be used soon. If $j_s \le i_s+1$ we are done, otherwise the sequence 
$(j_r)$ decreases strictly for $r \le s+1$ hence $j_{s+1} \le n-(s+1) +1 <  t_{m-1}$.

We can now  compute: 
$$
  \lfloor t_{m-1}, n  \rfloor  h_{n-1}(j_m, i_m)   \sigma_{n} 
=    \lfloor j_m, n  \rfloor   \lfloor t_{m-1}-1, n  -1  \rfloor    \lceil i_m,1 \rceil 
$$
equal to 
\begin{enumerate}
\item  $\lfloor j_m, n  \rfloor      \lceil i_m,1 \rceil \lfloor t_{m-1}-1, n  -1  \rfloor $ 
if $t_{m-1}-1 > i_m+1$  ;  
\item    $\lfloor j_m, n  \rfloor    \lceil i_m +1,1 \rceil     \lfloor t_{m-1}, n  -1  \rfloor $
if $t_{m-1}-1 \le  i_m+1$. 
\end{enumerate}
Recalling $ t_{m-1} -1 > 1$, we obtain in these two cases, respectively: 
\begin{enumerate}
\item $
R_n(w)=$ 

$ h_n(j_1, i_1) a_{n+1} \dots  h_n(j_{m-1}, i'_{m-1})  a_{n+1}    h_{n}(j_m, i_m)    a_{n+1}\lfloor t_{m-1}-1, n    \rfloor  x 
$;  
\item   $
R_n(w)=$ 

$ h_n(j_1, i_1) a_{n+1} \dots  h_n(j_{m-1}, i'_{m-1})  a_{n+1}    h_{n}(j_m, i_m+1)    a_{n+1}\lfloor t_{m-1}, n    \rfloor  x 
$. 
\end{enumerate}
Both have the expected form, by induction, once we observe the following. If $  i'_{m-1}  = i_{m-1}+1$, then 
also $i'_m= i_m+1$:  certainly $  i'_{m-1}  = i_{m-1}+1$ implies $t_{m-1}= t_{m-2} \le i_{m-1}+2$.  Hence 
$t_{m-1} \le i_{m}+2$,  so finally $t_{m-1}=t_m$ and $i'_m= i_m+1$. 
 \end{proof}
 
 \begin{corollary}\label{con}
 
 Let $w  \in W(\tilde A_n)$ be given in its canonical form: 
 $$w= h(j_1, i_1) a_{n+1} h(j_2, i_2) a_{n+1} \dots  h(j_m, i_m)  a_{n+1}  x, \quad x \in W(A_n),  $$    
 then $w \in R_n(W(\tilde A_{n-1} ))$ if and only if  the following conditions hold:
 
\begin{enumerate}
 
  \item $j_1 \le  n$  and $i_1 < n-1$; 
   
   \item letting $s=  \max \{k \   /  \    1 \le k \le m  \text{ and }  n-k  - i_k >0 \}$, we have:

$ n-(s+1)  - i_{s+1} < 0$;

   \item $x=\lfloor n-s+1, n \rfloor .y$ with   $y \in W(A_{n-1})$.
 \end{enumerate}

 \end{corollary}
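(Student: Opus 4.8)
The plan is to read everything off Theorem~\ref{towerandcanonical}. That theorem computes, for $w' \in W(\tilde A_{n-1})$ written canonically with blocks $(j_k, i_k)_{1 \le k \le m}$ and tail $x_0 \in W(A_{n-1})$, the canonical form of $R_n(w')$, namely~(\ref{imageAnminusone}): the $j_k$ are unchanged, $i'_k = i_k$ for $k \le s_0$ and $i'_k = i_k + 1$ for $k > s_0$, the tail becomes $\lfloor n-s_0+1, n \rfloor x_0$, where $s_0 = \max\{k : n-k-i_k > 0\}$. Since $R_n$ is injective and canonical forms are unique, $w$ lies in $R_n(W(\tilde A_{n-1}))$ exactly when its canonical form can be matched to~(\ref{imageAnminusone}) for some admissible data $(j_k,i_k)_{1\le k\le m}$, $x_0$; conditions~(1)--(3) are precisely a transparent reformulation of this matching, so the whole proof is the bookkeeping of the pairwise inequalities at the two levels $n-1$ and $n$.

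For the direct implication, suppose $w = R_n(w')$. First note that the index $s$ defined in the statement from the second coordinates of the blocks of $w$ --- which are the $i'_k$ --- coincides with the index $s_0$ of Theorem~\ref{towerandcanonical}: the sequence $n-k-i'_k$ is strictly decreasing (the $i'_k$ being non-decreasing), it equals $n-k-i_k$ for $k \le s_0$, hence is positive there, and equals $n-k-i_k-1 \le -1$ for $k > s_0$; so $\{k : n-k-i'_k>0\} = \{1,\dots,s_0\}$ and $s = s_0$. Condition~(1) is then the admissible range for the first block at level $n-1$, namely $1 \le j_1 \le n$ and $0 \le i_1 \le n-2$, transported through $i'_1 = i_1$ (legitimate since $s \ge 1$); condition~(3) holds with $t = n-s+1$ and $y = x_0$; and condition~(2) reads $n-(s+1)-i'_{s+1} = (n-(s+1)-i_{s+1}) - 1 \le -1 < 0$ (and is vacuous if $s = m$).

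For the converse, let $w$ be given canonically with blocks $(j_k,i_k)_{1\le k\le m}$ and tail $x = \lfloor n-s+1, n\rfloor\, y$, satisfying~(1)--(3). Build the candidate preimage $w'$ with blocks $(j_k, i_k^{\flat})$, where $i_k^{\flat} = i_k$ for $k \le s$ and $i_k^{\flat} = i_k - 1$ for $k > s$, and tail $y$. The crux is to check that $w'$ is a bona fide canonical form in $W(\tilde A_{n-1})$, i.e.\ that $(j_k, i_k^{\flat})_{1\le k\le m}$ satisfies the pairwise inequalities at level $n-1$ (that $y \in W(A_{n-1})$ is hypothesis~(3)). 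The engine is condition~(2): together with the definition of $s$ it gives $i_s < n-s$ and $i_{s+1} \ge n-s$, hence $i_{s+1} > i_s$ and, since $s \le n-1$, also $i_{s+1} \ge 1$; by monotonicity $i_k \ge 1$ for every $k > s$, so $i_k^{\flat} \ge 0$. Using in addition the pairwise inequalities of $w$ and conditions~(1)--(2), one checks $i_k^{\flat} \le n-2$ for all $k$ and $j_k \le n-1$ for $k \ge 2$, and one rules out the boundary clashes the substitution might create --- an index $k > s$ with $i_k^{\flat} = 0$ but $j_k \ne 1$, or a pair with $j_{k-1} \le i_{k-1}+1$ but $j_{k-1} > i_{k-1}^{\flat}+1$ and $j_k = j_{k-1}$. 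With $w'$ known to be canonical, Theorem~\ref{towerandcanonical} applies to it and returns the index $s_0 = s$ (once more by~(2), since $n-(s+1)-i_{s+1}^{\flat} = n-(s+1)-(i_{s+1}-1) \le 0$), so the forward substitution recovers $i'_k = i_k^{\flat}$ for $k \le s$ and $i'_k = i_k^{\flat}+1 = i_k$ for $k > s$, i.e.\ $i'_k = i_k$ throughout, with tail $\lfloor n-s+1, n\rfloor y = x$; hence $R_n(w') = w$.

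The main obstacle is the pairwise-inequality check for the reconstructed $(j_k, i_k^{\flat})$ in the converse. Lowering $i_k$ by one for $k > s$ is exactly the kind of move that can push a pair out of the admissible region for $W(\tilde A_{n-1})$, so the argument has to show that every such potential violation is already forbidden by the combination of the level-$n$ pairwise inequalities of $w$ and condition~(2). The recurring mechanism is that~(2) forces $i_{s+1}$ to be ``large'' (at least $n-s$), and the pairwise conditions~(4)--(5), propagating this rigidity onto the $j_k$, leave no room for a clash; spelling this out is a finite but fussy case analysis, which I would confine to a short preliminary lemma.
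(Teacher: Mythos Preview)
Your approach is essentially the paper's: both directions are read off Theorem~\ref{towerandcanonical}, and the only nontrivial point is verifying that the reconstructed family $(j_k,i_k^{\flat})$ (the paper's $(j_t,\bar i_t)$) satisfies the pairwise inequalities at level $n-1$, a check the paper likewise leaves to the reader. Your write-up is more explicit---the verification that $s=s_0$ in both directions and the identification of the potential boundary clashes are useful additions---but the strategy and the remaining work are identical.
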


\begin{proof} The only thing to check is that, letting   $\bar i_t=i_t$ if $t\le s$ and $\bar i_t=i_t-1$ if $t>s$, the family  $(j_t, \bar i_t)_{1\le t \le m}$ satisfies the pairwise inequalities. This is left to the reader. 
\end{proof}
 
 The corollary tells that for a $w$ in $ W(\tilde A_n)$:  belonging to the image $R_n(W(\tilde A_{n-1} ))$ depends only on the $n$ leftmost affine bricks of the affine block $\mathbf{w_a}$  of $w$ and the finite part $x\in W(A_n)$! And that for every affine block $\mathbf{w_a}$ verifying  conditions (1) and (2) there are exactly $n!$ elements  $x\in  W(A_n)$ such that $\mathbf{w_a}.x$ is in  $R_n(W(\tilde A_{n-1} ))$. And finally that every element in $W(\tilde A_{n-1} )$ can be attained in such a way. \\

We can deduce from this the faithfulness of the tower of Hecke algebras on any ring, following the tracks of   \cite[Theorem 3.2]{Sadek_2019}, with exactly the same proofs. In what follows, by algebra we mean
  $K$-algebra, where   $K$ is an arbitrary commutative ring with identity. We  fix  an invertible element $q$ in $K$.  
  There is a unique algebra structure on the free $K$-module with basis $ \{  g_w  |  w \in W(\tilde A_n) \} $ satisfying for $s \in S_n$: 
\begin{equation*}\label{definingrelations} 	
	 \begin{aligned}
		 &g_{s} g_{w} =g_{sw}     ~~~~~~~~~~~~~~~~~~~~  \text{ if } s \notin \mathscr{L} (w) , \\
		 &g_{s} g_{w} =qg_{sw}+ (q-1)g_w ~~  \text{ if } s \in \mathscr{L} (w). 
			  \end{aligned}    \qquad 
		\end{equation*}
This algebra is the Hecke algebra of type $\tilde A_n$, denoted   by    $H \tilde A_n(q)$. It  has a presentation  given by generators $ \left\{  g_s \ | \   s \in S_n \right\} $ and well-known relations.  The generators 
$ g_{s}$,  $s \in S_n$, are invertible.

 	The morphism $R_n$ defined in the beginning of this subsection has a counterpart in the setting of Hecke algebras,    namely the following morphism of algebras (where we write carefully $e_w$ for the basis elements of $H \tilde A_{n-1}(q)$, to be reminded of the possible lack of injectivity):  
	
		\begin{equation}\label{defRn}
\begin{aligned}
					  HR_n: H\tilde{A}_{n-1} (q)  &\longrightarrow   H\tilde{A}_{n} (q) \\
					e_{\sigma_i} &\longmapsto  g_{\sigma_i}  ~~~ ~~~ ~~~ \text{for }  1\leq i\leq n-1  \\
					e_{a_n} &\longmapsto  g_{\sigma_n} g_{a_{n+1}}g_{\sigma_n}^{-1}  .
\end{aligned}		
\end{equation}	 
 
 We have shown in \cite[Proposition 4.3.3]{Sadek_Thesis} that this homomorphism  is injective for $K=\mathbb Z[q, q^{-1}]$ where $q$ is an indeterminate. With a general $K$ as above, we can obtain injectivity using the following technical but crucial result, an immediate consequence of  Theorem \ref{towerandcanonical}  (see \cite[Proposition 3.1]{Sadek_2019}):    

\begin{proposition}\label{coroBC}
	Let $w$ be any element in $W(\tilde{A}_{n-1 } )$, then there exist  $A_w \in  q^\mathds Z$ and  elements  $ \lambda_{x} \in K$  such that 

$$
					HR_n(e_{w}) =A_w \   g_{R_n(w)}+ \sum\limits_{\begin{smallmatrix} x\in W(\tilde{A}_{n}),    \cr  l(x)<l(R_n(w)) \cr L(x)\le L(w) \end{smallmatrix}} \lambda_{x}g_{x}, 
$$
\end{proposition}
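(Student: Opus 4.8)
The plan is to induct on the affine length $L(w)=:\ell$ and, for fixed $\ell$, on the Coxeter length $l(w)$, using the canonical reduced form of $w$ supplied by Theorem \ref{AA} together with the explicit description of $R_n(w)$ in Theorem \ref{towerandcanonical}. Write the canonical reduced expression $w = h_{n-1}(j_1,i_1)\,a_n\cdots h_{n-1}(j_m,i_m)\,a_n\,x$ with $x\in W(A_{n-1})$, so $\ell=m$. The base case $\ell=0$ means $w=x\in W(A_{n-1})\subseteq W(A_n)$; then $HR_n(e_x)=g_x=g_{R_n(x)}$ (with $A_w=1$ and no lower terms), since $R_n$ restricted to $W(A_{n-1})$ is just the parabolic inclusion and $HR_n$ respects the braid/quadratic relations, hence sends the basis element $e_x$ of the finite parabolic Hecke subalgebra to $g_x$.

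For the inductive step, one applies $HR_n$ factor by factor along the canonical word, tracking what happens to the leading term and bounding the error. Concretely: $HR_n(e_w)=HR_n(e_{a_n})\,HR_n(e_{w'})$ for a suitable shorter $w'$ (peeling off one $a_n$ from the left together with the $h_{n-1}$-block before it), and $HR_n(e_{a_n})=g_{\sigma_n}g_{a_{n+1}}g_{\sigma_n}^{-1}$. Expanding $g_{\sigma_n}^{-1}=q^{-1}g_{\sigma_n}-(1-q^{-1})$ via the quadratic relation, and repeatedly using the multiplication rules $g_s g_u = g_{su}$ when $s\notin\mathscr{L}(u)$ and $g_s g_u=q g_{su}+(q-1)g_u$ otherwise, one pushes the product into the basis. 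The point is that the ``straightening'' of $\sigma_n\,a_{n+1}\,\sigma_n$ times the rest mirrors exactly the word manipulations (the formulas (\ref{productsof2Legobricks}), (\ref{productsof3Legobricks}) and Proposition \ref{exchangeformulas}) used in Theorem \ref{towerandcanonical} to derive the canonical form (\ref{imageAnminusone}) of $R_n(w)$: each such manipulation either preserves length (a pure braid move, contributing no correction term) or is a cancellation, and Theorem \ref{towerandcanonical} guarantees that after all the moves the top term is precisely $g_{R_n(w)}$, with coefficient a monomial $A_w\in q^{\mathds Z}$ (the accumulated powers of $q$ and $q^{-1}$ from the quadratic relations, which by $l(R_n(w))=l(w)+2L(w)$ and bookkeeping of each relation used is an integer power of $q$). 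Every correction term $g_y$ produced along the way either has strictly smaller Coxeter length than the word currently being straightened — hence ultimately $l(y)<l(R_n(w))$ — or arises from a $g_u$ term in the quadratic relation that still lies in the image of a shorter $HR_n(e_{w''})$ with $L(w'')\le L(w)$; applying the induction hypothesis to such $w''$ and to $w'$ shows every error term is a $K$-combination of $g_x$ with $l(x)<l(R_n(w))$ and $L(x)\le L(w)$. The affine-length bound $L(x)\le L(w)$ is automatic because $HR_n(e_{a_n})=g_{\sigma_n}g_{a_{n+1}}g_{\sigma_n}^{-1}$ introduces exactly one $g_{a_{n+1}}$, so no monomial in $g$'s appearing in $HR_n(e_w)$ can involve more than $m$ copies of $a_{n+1}$, and by Remark \ref{affinelength} the affine length of any $x$ indexing a nonzero term is at most $m$.

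The main obstacle is the careful bookkeeping in the straightening step: one must verify that every time the quadratic relation $g_s g_u=q g_{su}+(q-1)g_u$ is invoked during the reduction of $HR_n(e_{a_n})HR_n(e_{w'})$, the $(q-1)g_u$ term is genuinely ``lower'' in both the length and the affine-length orders, and that the surviving leading monomial is exactly $g_{R_n(w)}$ with a \emph{monomial} (not merely polynomial) coefficient in $q$. This is where one leans hardest on Theorem \ref{towerandcanonical}: it already tells us both the identity of the top term and the exact length jump $l(R_n(w))-l(w)=2L(w)$, so the coefficient $A_w$ must collect precisely $2L(w)$-many applications of the relations in a way that produces a power of $q$; one then checks, by matching each braid/cancellation move in the proof of Theorem \ref{towerandcanonical} with the corresponding Hecke-algebra identity, that the powers of $q$ and $q^{-1}$ combine to an element of $q^{\mathds Z}$. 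Since this is the same pattern as \cite[Proposition 3.1]{Sadek_2019}, one may either reproduce that argument line by line in the present notation or invoke it directly once the dictionary (Theorem \ref{towerandcanonical} here $\leftrightarrow$ the corresponding length formula there) is in place.
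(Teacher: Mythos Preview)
Your approach is correct and coincides with the paper's, which simply declares the result an immediate consequence of Theorem~\ref{towerandcanonical} and refers to \cite[Proposition~3.1]{Sadek_2019} for the identical argument. One small remark: the induction and the factorization $HR_n(e_w)=HR_n(e_{a_n})\,HR_n(e_{w'})$ (which as written drops the factor $g_{h_{n-1}(j_1,i_1)}$) are more elaborate than necessary---since Theorem~\ref{towerandcanonical} guarantees that substituting $\sigma_n a_{n+1}\sigma_n$ for each $a_n$ in the canonical reduced word of $w$ yields a \emph{reduced} word for $R_n(w)$, expanding each $g_{\sigma_n}^{-1}=q^{-1}g_{\sigma_n}-q^{-1}(q-1)$ directly gives the leading term $q^{-L(w)}g_{R_n(w)}$ in one stroke, with every other summand a product of fewer generators (hence of strictly smaller length) and involving at most $L(w)$ copies of $g_{a_{n+1}}$ (hence, by Remark~\ref{affinelength}, of affine length at most $L(w)$).
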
				  
 
With this, the proof of \cite[Theorem 3.2]{Sadek_2019} applies, we obtain:

\begin{corollary}\label{HeckeA} Let $K$ be a ring and $q$ be invertible in $K$. 
The tower of affine Hecke  algebras: 

$$  H\tilde{A}_{1}(q)  \stackrel{HR_{2}}{\longrightarrow}  H\tilde{A}_{2}(q) \stackrel{HR_{3}} {\longrightarrow}  \cdots  H\tilde{A}_{n-1} (q)\stackrel{HR_{n}} {\longrightarrow}  H\tilde{A}_{n}(q)\longrightarrow  \cdots $$

\medskip\noindent
is a  tower of faithful arrows. 	\\  

\end{corollary}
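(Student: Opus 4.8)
The plan is to prove that each arrow $HR_n$ in the tower is injective; faithfulness of the whole tower then follows at once, a composite of injective $K$-algebra maps being injective. Fix $n\ge 2$ and abbreviate $H=H\tilde{A}_n(q)$ with its free $K$-basis $\{g_x\}_{x\in W(\tilde{A}_n)}$, and $H'=H\tilde{A}_{n-1}(q)$ with its free $K$-basis $\{e_w\}_{w\in W(\tilde{A}_{n-1})}$.

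The decisive input is Proposition~\ref{coroBC}, itself a formal consequence of Theorem~\ref{towerandcanonical}: for every $w\in W(\tilde{A}_{n-1})$ one has
$$
HR_n(e_w)=A_w\,g_{R_n(w)}+\sum_{\substack{x\in W(\tilde{A}_n)\\ l(x)<l(R_n(w))\\ L(x)\le L(w)}}\lambda_x\,g_x ,
$$
with $A_w\in q^{\mathbb{Z}}$, hence $A_w$ invertible in $K$. Thus $HR_n(e_w)$ carries a well-defined \emph{leading term} $A_w g_{R_n(w)}$, and every other basis element $g_x$ occurring in it has strictly smaller Coxeter length than $R_n(w)$ (by Theorem~\ref{towerandcanonical} one even has $l(R_n(w))=l(w)+2L(w)$, but only the strict inequality $l(x)<l(R_n(w))$ is used). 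Moreover $R_n$ is the group monomorphism of \S\ref{Arr} (injective by \cite[Lemma~4.1]{Sadek_2016}), so $w\mapsto R_n(w)$ is injective and the leading terms $g_{R_n(w)}$, $w\in W(\tilde{A}_{n-1})$, are pairwise distinct.

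With this in hand I would run the usual triangularity argument over $K$. Let $z=\sum_w\mu_w e_w\in H'$ be nonzero (finite support), and suppose for contradiction that $HR_n(z)=0$. Among the finitely many $w$ with $\mu_w\ne 0$, pick $w_0$ maximizing the integer $l(R_n(w_0))$. I claim that the coefficient of $g_{R_n(w_0)}$ in $HR_n(z)=\sum_w\mu_w HR_n(e_w)$ equals $\mu_{w_0}A_{w_0}$. Indeed, for a support element $w\ne w_0$ the leading term $A_w g_{R_n(w)}$ cannot contribute since $R_n(w)\ne R_n(w_0)$, while each of its remaining terms $g_x$ satisfies $l(x)<l(R_n(w))\le l(R_n(w_0))$ by maximality of $w_0$, so $x\ne R_n(w_0)$; and for $w=w_0$ itself the lower terms again satisfy $l(x)<l(R_n(w_0))$. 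Hence $\mu_{w_0}A_{w_0}=0$, and since $A_{w_0}$ is invertible, $\mu_{w_0}=0$, contradicting the choice of $w_0$. Therefore $\ker HR_n=0$, and this holds for an arbitrary commutative ring $K$ with $q$ invertible.

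The only genuinely nonroutine point is Proposition~\ref{coroBC} itself, i.e.\ controlling the expansion of $HR_n(e_w)$ in the basis $\{g_x\}$: one substitutes $g_{\sigma_n}g_{a_{n+1}}g_{\sigma_n}^{-1}$ for every occurrence of $e_{a_n}$ in a product of Hecke generators representing $g_w$, rewrites using the quadratic and braid relations, and reads off from Theorem~\ref{towerandcanonical} that the unique term of maximal Coxeter length is a power of $q$ times $g_{R_n(w)}$, all remaining terms being strictly shorter and of affine length at most $L(w)$. This is precisely \cite[Proposition~3.1]{Sadek_2019}, whose proof transfers verbatim in our setting; granting it, everything above is elementary, and the argument is the one already used for \cite[Theorem~3.2]{Sadek_2019}.
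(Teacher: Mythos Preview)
Your proposal is correct and follows essentially the same approach as the paper: the paper simply invokes Proposition~\ref{coroBC} and says ``the proof of \cite[Theorem~3.2]{Sadek_2019} applies'', which is precisely the triangularity argument you spell out (leading term $A_w g_{R_n(w)}$ with $A_w\in q^{\mathbb Z}$ invertible, strictly lower-length tail, injectivity of $R_n$ to separate leading terms). You have unpacked the reference; the logic is identical.
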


 
\renewcommand{\refname}{REFERENCES}

\end{document}